\let\origsection=\section \def\section{\@ifstar{\origsection*}{\mysection}}
\def\mysection{\@startsection{section}{1}\z@{.7\linespacing\@plus\linespacing}{.5\linespacing}{\normalfont\scshape\centering\S}}
\renewcommand{\Indentp}[1]{%
  \advance\leftskip by #1
  \advance\skiptext by -#1
  \advance\skiprule by #1}%
\renewcommand{\Indp}{\algocf@adjustskipindent\Indentp{12pt}}
\definecolor{codelightgray}{gray}{0.8}
\definecolor{codeverylightgray}{gray}{0.9}
\renewcommand{\PrintDOI}[1]{\doi{#1}}
\numberwithin{equation}{section}
\numberwithin{figure}{section}
\let\polishlcross=\l
\def\l{\ifmmode\ell\else\polishlcross\fi}
\def\argmin{\text{argmin}}
\def\paragraph#1{%
	\noindent\textbf{#1.}\enspace}
\let\emptyset=\varnothing
\let\setminus=\smallsetminus
\def\moverlay{\mathpalette\mov@rlay}
\def\mov@rlay#1#2{\leavevmode\vtop{   \baselineskip\z@skip \lineskiplimit-\maxdimen
		\ialign{\hfil$\m@th#1##$\hfil\cr#2\crcr}}}
\newcommand{\charfusion}[3][\mathord]{
	#1{\ifx#1\mathop\vphantom{#2}\fi
		\mathpalette\mov@rlay{#2\cr#3}
	}
	\ifx#1\mathop\expandafter\displaylimits\fi}
\DeclareFontFamily{U}  {MnSymbolC}{}
\DeclareSymbolFont{MnSyC}         {U}  {MnSymbolC}{m}{n}
\DeclareFontShape{U}{MnSymbolC}{m}{n}{
	<-6>  MnSymbolC5
	<6-7>  MnSymbolC6
	<7-8>  MnSymbolC7
	<8-9>  MnSymbolC8
	<9-10> MnSymbolC9
	<10-12> MnSymbolC10
	<12->   MnSymbolC12}{}
\DeclareMathSymbol{\powerset}{\mathord}{MnSyC}{180}
\newcommand*\circled[1]{\tikz[baseline=(char.base)]{
            \node[shape=circle,draw,inner sep=2pt] (char) {#1};}}
\let\epsilon=\varepsilon
\let\eps=\epsilon
\let\rho=\varrho
\let\theta=\vartheta
\let\kappa=\varkappa
\let\E=\EE
\def\NN{{\mathds N}}
\let\Prob=\PP
\newcommand{\cP}{\mathcal{P}}
\newcommand{\ham}{\mathcal{HAM}}
\theoremstyle{plain}
\newtheorem{thm}{Theorem}[section]
\newtheorem{theorem}[thm]{Theorem}
\newtheorem{prop}[thm]{Proposition}
\newtheorem{fact}[thm]{Fact}
\newtheorem{cor}[thm]{Corollary}
\newtheorem{lemma}[thm]{Lemma}
\newtheorem{obs}[thm]{Observation}
\theoremstyle{definition}
\newtheorem{rem}[thm]{Remark}
\newtheorem{dfn}[thm]{Definition}
\newtheorem{conj}[thm]{Conjecture}
\newcommand{\seq}[1]{\accentset{\rightharpoonup}{#1}}
\let\phi=\varphi
\begin{document}
	
	\title[Powers of Hamiltonian cycles in randomly augmented graphs]{Powers of Hamiltonian cycles in randomly augmented P\'osa-Seymour  graphs }
	
	\author[S. Antoniuk]{Sylwia Antoniuk}
	\address{Department of Discrete Mathematics, Adam Mickiewicz University, Pozna\'n, Poland}
	\email{antoniuk@amu.edu.pl}
	\thanks{The first author was supported by Narodowe Centrum Nauki, grant 2024/53/B/ST1/00164}

	\author[A. Dudek]{Andrzej Dudek}
	\address{Department of Mathematics, Western Michigan University, Kalamazoo, MI, USA}
	\email{andrzej.dudek@wmich.edu}
	\thanks{The second author was supported in part by Simons Foundation Grant MPS-TSM-00007551.}

	\author[A. Ruci\'nski]{Andrzej Ruci\'nski}
	\address{Department of Discrete Mathematics, Adam Mickiewicz University, Pozna\'n, Poland}
	\email{\tt rucinski@amu.edu.pl}
	\thanks{The third author was supported by Narodowe Centrum Nauki, grant 2024/53/B/ST1/00164}

	\begin{abstract}
 We study the question of the least number of  random edges that need to be added to a P\'osa-Seymour  graph, that is, a graph with  minimum degree exceeding $\frac k{k+1}n$, to secure the existence of the $m$-th power of a Hamiltonian cycle, $m>k$. It turns out that, depending on $k$ and $m$, this quantity may be captured by two types of thresholds, with one of them, called over-threshold, becoming dominant for large $m$. Indeed, for each $k\ge2$ and $m>m_0(k)$, we establish asymptotically tight lower and upper bounds on the over-thresholds (provided they exist) and show that for infinitely many instances of $m$ the two bounds coincide.
  In addition, we also determine the thresholds for some small values of $k$ and $m$.
	\end{abstract}
	
	\maketitle

%\setcounter{footnote}{1}

%********************** Introduction **********************
	
\section{Introduction}

The study of \emph{randomly augmented} graphs, also called \emph{randomly perturbed graphs}, was initiated by Bohman, Frieze, and Martin in \cite{BFM2003}. The general problem can be described as follows: given a family $\mathcal G$ of $n$-vertex graphs and a graph property~$\mathcal{P}$, determine minimal $p=p(n)$ which ensures that for \emph{every} $G\in\mathcal G$ \emph{asymptotically almost surely (briefly: a.a.s.)} $G\cup G(n,p)\in \mathcal{P}$. Here $G(n,p)$ is the standard random binomial graph with $p=p(n)$. In \cite{BFM2003}, the authors considered the case where, given $0<\epsilon<1/2$, $\mathcal G$ is the family of all $n$-vertex graphs $G$ with minimum degree $\delta(G)\geq \eps n$, while $\cP$ is the Hamiltonicity, and showed that the threshold sequence $p(n)$ for $G \cup G(n,p)$ being Hamiltonian is of order $1/n$.

Several papers (see e.g.~\cite{DRRS}, \cite{ADRRS}, \cite{BPSS2022}, \cite{ADR}) followed that suit and under other Dirac-type conditions considered higher powers of Hamiltonian cycles.

For $m\in\NN$ the \emph{$m$-th power~$F^m$} of a~graph~$F$ is defined as the graph on the same vertex set as $F$, and whose edges join distinct vertices at distance at most~$m$ in~$F$.
The $m$-th power of an $n$-vertex path/cycle will be often called an \emph{$m$-path}/\emph{$m$-cycle} and denoted by $P_n^m$ and $C_n^m$, respectively.
A~Hamiltonian cycle in a graph $G$ is a cycle which passes through all vertices of~$G$. For integers $m\ge1$ and $n\geq m+2$, the family $\ham_n^m$ consists of all $n$-vertex graphs~$G$ that contain the $m$-th power of a Hamiltonian cycle.

Recall that the celebrated P\'osa-Seymour conjecture asserted that every graph $G$ with $n$ vertices and minimum degree $\delta(G)\ge\tfrac k{k+1}n$ contains the $k$-th power of a Hamiltonian cycle.  Koml\'os, S\'ark\"ozy, and Szemer\'edi~\cite{KSS1996,KSS1998} proved this conjecture for $n$ large enough.   The deterministic graphs we are going to consider will always satisfy a slightly stronger condition $\delta(G)\ge\left(\tfrac k{k+1}+\eps\right)n$ and we will be interested in probability $p(n)$ guaranteeing higher (than $k$) powers of Hamilton cycles in $G\cup G(n,p)$.
	
%********************** Thresholds and over-thresholds: known results **********************

\subsection{Thresholds and over-thresholds: known results}\label{known} As customary, any definition of a threshold consists of a 1-statement and a 0-statement, with 1 and 0 indicating the respective limiting probability. This is also the case of thresholds for properties of randomly augmented graphs.

\begin{dfn}\label{thres}
We say that a function $d(n)$ is a \emph{$(k,m)$-Dirac threshold} if
    \begin{itemize}
	\item[(i)] for every $\eps>0$ there exists     $c_1>0$ such that for all $n$-vertex graphs $G$ with $\delta(G)\ge\left(\tfrac k{k+1}+\eps\right)n$ and all $p:=p(n)\ge c_1d(n)$
    $$\lim_{n\to\infty}\Prob\left(G\cup G(n,p)\in\ham^m_n\right)=1,$$
	and
	\item[(ii)]  there exist $\epsilon>0$, $c_0>0$, and a sequence of $n$-vertex graphs $G_\epsilon:=G_\epsilon(n)$ with $\delta(G_\eps)\ge\left(\tfrac k{k+1}+\epsilon\right)n$ such that for every $p:=p(n)\le c_0d(n)$
    $$\lim_{n\to\infty}\Prob\left(G_\eps\cup G(n,p)\in\ham^m_n\right)=0.$$
    \end{itemize}

If exists, the $(k,m)$-Dirac threshold is denoted by $d_{k,m}(n)$. In such case, the 1-statement alone yields the upper bound $d_{k,m}(n)\le d(n)$, while the 0-statement alone yields the lower bound $d_{k,m}(n)\ge d(n)$.
\end{dfn}

Note that, by definition, the above threshold $d(n)$ does not depend on $\eps$ (only the constants $c_1$ and $c_0$ do).
	
In \cite{DRRS} it was proved that $d_{k,k+1}(n)=n^{-1}$ for all $k\ge1$. This was substantially extended in \cite{ADRRS} to cover many other $(k,m)$-Dirac thresholds. In particular, it turned out that $d_{k,m}(n)=n^{-1}$ for all $m\le 2k+1$. This result was independently obtained by Nenadov and Truji\'{c} in~\cite{NT}.

We now state the main result of \cite{ADRRS}. For the ease of future references, we split it into the 1-statement, the 0-statement, and a unifying conclusion.

\begin{theorem}[\cite{ADRRS}]\label{21}
Let $k\in\mathbb{N}$.
\begin{enumerate}
    \item[(a)] For all integers $r\geq 0$, $\ell\geq r(r+1)$, and $m\le k\ell+r$, if $d_{k, m}(n)$ exists, then $d_{k, m}(n)\leq n^{-2/\ell}$.
    \item[(b)] For all positive integers $\ell$ and $m\ge (k+1)(\ell-1)$, if $d_{k, m}(n)$ exists, then $d_{k,m}(n)\geq n^{-2/\ell}$.
    \item[(c)] Consequently, for $(k+1)(\ell-1)\le m\le k\ell+r$ and $\ell\geq r(r+1)$, we have $d_{k,m}(n)= n^{-2/\ell}$.
  \end{enumerate}
\end{theorem}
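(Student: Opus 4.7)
The plan is to prove (a) and (b) separately; part (c) is then immediate by picking $\ell$ so that $m$ lies in $[(k+1)(\ell-1),\,k\ell+r]$ with $\ell\ge r(r+1)$.

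\textbf{Lower bound (b).} I would construct an extremal graph $G_\eps$ whose intrinsic structure refuses to accommodate $C_n^m$ without help from many random edges. Partition $[n]$ into $k+1$ nearly balanced parts $V_1,\ldots,V_{k+1}$, include every cross-edge, and inside each $V_i$ install a balanced complete $(\ell-1)$-partite graph on sub-parts of size $\approx n/((k+1)(\ell-1))$, with a small tweak for $\ell=2$. A direct computation gives $\delta(G_\eps)\ge(\tfrac{k}{k+1}+\alpha)n$ for some $\alpha=\alpha(k,\ell)>0$, and by design $\omega(G_\eps[V_i])=\ell-1$. If $C_n^m\subseteq G_\eps\cup G(n,p)$, then each of the $n$ cliques $K_{m+1}$ on consecutive cycle-vertices must, by pigeonhole applied to $m\ge(k+1)(\ell-1)$, place $\ge\ell$ vertices into some common part $V_i$, which forces a $K_\ell$ inside $V_i$ and hence at least one intra-$V_i$ random edge. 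I would then count labelled configurations: each forced intra-part random edge costs a factor $p=O(n^{-2/\ell})$, while $\Theta(n)$ such edges are mandatory along the cycle, so the total expected count tends to $0$ for $c_0$ small. The main obstacle is controlling the union bound over cycle orderings and over partite colour-patterns along the cycle; this is tamed by fixing the colour pattern up to symmetry and exploiting the $\Theta(n)$ essentially independent edge constraints.

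\textbf{Upper bound (a).} The strategy is absorption plus a random connecting lemma. Using the slack $\eps n$ in the minimum degree together with the Koml\'os-S\'ark\"ozy-Szemer\'edi theorem for $C_n^k$, I would first build inside $G$ an absorbing $m$-path $P_A$ and set aside a small reservoir $R$, and then tile $V(G)\setminus(R\cup V(P_A))$ by a bounded number of long $m$-paths in $G$. The crucial step is a \emph{connecting lemma}: to splice the tail $a_1,\ldots,a_m$ of one $m$-path to the head $b_1,\ldots,b_m$ of the next, exhibit a short connector whose interior is a $K_{\ell+1}$ from $G(n,p)$ lying in a common $G$-neighborhood of size $\ge\eps n/2$. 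For $p\ge c_1 n^{-2/\ell}$ with $c_1$ large, the expected number of such $K_{\ell+1}$'s in any fixed large subset is $\Theta(c_1^{\binom{\ell+1}{2}})\to\infty$, and a standard concentration argument supplies connectors at each of the $O(1)$ splice sites; then $P_A$ absorbs $R$ to complete $C_n^m$. The hard part will be the connecting lemma itself: one must realize the $\binom{m+1}{2}-\binom{k+1}{2}$ excess adjacencies of $C_n^m$ over $C_n^k$ by weaving a single random $K_{\ell+1}$ into a $C^k$-style backbone inherited from $G$. This is exactly where the arithmetic condition $m\le k\ell+r$ with $\ell\ge r(r+1)$ enters: it certifies that the excess can be distributed across $\ell$ random vertices inserted into the backbone, with the residue $r$ absorbed by the constraint $\ell\ge r(r+1)$.
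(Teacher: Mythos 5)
Your proposal diverges from the paper in both parts, and in both cases the divergence introduces a genuine gap.

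\textbf{Part (b).} Your construction is essentially different from the one the paper uses, and your counting argument does not close. The paper's extremal graph $G^{(k)}_\eps$ is the complete balanced $(k+1)$-partite graph plus, inside each part $U_i$, only a small complete bipartite graph between a slack set $W_i$ of size $\lceil\eps n\rceil$ and $U_i\setminus W_i$; in particular $U_i\setminus W_i$ remains an \emph{independent set} in $G_\eps$. Consequently every $(m+1)$-segment of the Hamiltonian $m$-cycle that avoids $W$ forces (by pigeonhole, since $m\geq(k+1)(\ell-1)$) a full $K_\ell$ with all $\binom{\ell}{2}$ edges coming from $G(n,p)$; there are $\geq 2\eps n$ vertex-disjoint such segments, so $\geq 2\eps n$ copies of $K_\ell$. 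The punchline is not a union bound over cycles but a second-moment concentration bound for the number of $K_\ell$'s (the paper's Fact~\ref{Markov}(c)): at $p\le c_0n^{-2/\ell}$ with $c_0$ small, a.a.s.\ there are fewer than $2\eps n$ copies of $K_\ell$, so no such cycle can exist. Your $(\ell-1)$-partite construction inside each part forces, in the worst case, only \emph{one} random edge per $(m+1)$-segment (the $\ell$ pigeonholed vertices may land in $\ell-1$ sub-parts with exactly one collision), and single random edges are abundant at $p\asymp n^{-2/\ell}$, so there is nothing to contradict. Moreover, the first-moment count you propose cannot work even with the paper's construction: the number of vertex orderings is $n!$, so you would need at least $(\ell/2)n$ forced random edges to beat $n!\cdot p^{cn}$, whereas the forced edges number only about $\binom{\ell}{2}n/(m+1)\ll(\ell/2)n$ since $m\ge\ell$. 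The concentration-of-$X_\ell$ route is not a technical convenience here; it is what makes the 0-statement true.

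\textbf{Part (a).} (This part is only sketched in the present paper, as Theorem~\ref{21} is cited from~\cite{ADRRS}, but the machinery is described in Section~\ref{mach}.) Your general absorption framework is in the right spirit, but the random object you feed the connecting lemma has the wrong density. The clique $K_{\ell+1}$ has $1$-density $d_{K_{\ell+1}}=\binom{\ell+1}{2}/\ell=(\ell+1)/2$, so a Janson-type argument producing such cliques requires $p\gtrsim n^{-2/(\ell+1)}$, which is strictly larger than the target $n^{-2/\ell}$. The structure the paper actually uses is $(k+1)$ disjoint copies of the braid $B(\ell,r,t)$, a chain of $t$ disjoint $\ell$-cliques joined by $r$-bridges, woven into an $\ell$-blow-up of a $k$-path coming from $G$; the maximum $1$-density of this braid is $d_{K_\ell}=\ell/2$ precisely when $\ell\ge r(r+1)$, which is exactly where the arithmetic hypothesis enters. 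Also note that $t$ is not $O(1)$ but a constant depending on $\eps$ through the Regularity Lemma, so the connectors are not short in the sense you describe. In short, your proposal captures the absorbing-method skeleton but misses the braid/blow-up decomposition and miscalibrates the clique size, which would place the upper bound one exponent level too high.
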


The $(k,m)$-Dirac thresholds determined so far are of the form $d_{k,m}(n)=n^{-\eta_{k,m}}$, with a~positive rational $\eta_{k,m}\le1$. Let us call this number the \emph{$(k,m)$-Dirac exponent}.

Despite the apparent generality of Theorem~\ref{21}, a vast majority of pairs $(k,m)$ still remained unaccounted for. For instance, for $k=1$ Theorem~\ref{21}(c) covers only $m\in\{2,3,4\}$, for $k=2$ only $m\in\{3,4,5,6,7,9\}$, while for $k=3$ only $m\in\{4,\dots,10,12,13,16,20\}$. By some ad hoc arguments, the exponents $\eta_{1,8}$ and $\eta_{2,14}$ have also been determined in \cite{ADRRS}.

Recently, in \cite{ADR} we have fully covered the case $k=1$, showing that except for a few small values of $m$, the thresholds change their nature and become more elusive, which lead us to the following definition of an \emph{Dirac over-threshold.}
	
\begin{dfn}\label{over} We say that a positive rational $\eta$ is a \emph{$(k,m)$-Dirac over-exponent} if
    \begin{itemize}
        \item[(i)] for every $\eps>0$ there exists $\mu>0$ such that for all $n$-vertex graphs $G$ with $\delta(G)\ge\left(\tfrac k{k+1}+\eps\right)n$ and all $p:=p(n)\ge n^{-\eta-\mu}$
        $$\lim_{n\to\infty}\Prob\left(G\cup G(n,p)\in\ham^m_n\right)=1,$$
        and
        \item[(ii)]  for every real $\mu>0$ there exists $\epsilon>0$ and a sequence of $n$-vertex graphs $G_\epsilon:=G_\epsilon(n)$ with $\delta(G_\eps)\ge\left(\tfrac k{k+1}+\epsilon\right)n$ such that for every $p:=p(n)\le n^{-\eta-\mu}$
	$$\lim_{n\to\infty}\Prob\left(G_\eps\cup    G(n,p)\in\ham^m_n\right)=0.$$
    \end{itemize}
If exists, the $(k,m)$-Dirac over-exponent is denoted by $\bar \eta_{k,m}$ and the function $\bar d_{k,m}(n) := n^{-\bar\eta_{k,m}}$ is called the \emph{$(k,m)$-Dirac over-threshold}. Then, the 1-statement alone yields the bound $\bar \eta_{k,m}\ge \eta$, equivalently $\bar d_{k,m}(n)\le n^{-\eta}$, while the 0-statement alone yields the bound $\bar \eta_{k,m}\le \eta$, equivalently $\bar d_{k,m}(n)\ge n^{-\eta}$.

Throughout the paper, whenever we write any of these inequalities, we assume implicitly that the object in question, that is $\eta_{k,m}$, $\bar \eta_{k,m}$, $d_{k,m}(n)$, or $\bar d_{k,m}(n)$, exists.
\end{dfn}

The prefix ``over'' is meant to remind us that the abrupt change in behavior of the probability in question happens just ``below'' the function $n^{-\eta_{k,m}}$. The main difference between the $(k,m)$-Dirac threshold  and the $(k,m)$-Dirac over-threshold  is that now the implicit dependence on $\epsilon$ is much more substantial. As a drawback, however, for a given $\eps$ we do not obtain a~threshold in the classical sense, but a pair of functions bounding it from both sides.

More precisely, for every $\eps>0$ the 1-statement (i) holds for $p\ge n^{-\eta-\mu_1(\eps)}$ while the 0-statement (ii) holds for $p\le n^{-\eta-\mu_2(\eps)}$, with $\mu_1(\eps)<\mu_2(\eps)$, where $\mu_2(\eps)$ is the inverse of the function $\eps(\mu)$ defined in part (ii) of Definition~\ref{over}. In all known cases, it follows from the proofs that $\mu_2(\eps)$ is a linear function of $\eps$, while $\mu_1(\eps)$ is a tower function of $\eps$ born in the Regularity Lemma.

\medskip
Throughout the paper, we will be using notation $\argmin f(x)[D]$ to denote the (smallest) element $x_0\in D$ for which $f(x_0)=\min_{x\in D}f(x)$.
To formulate the result for $k=1$ from \cite{ADR} in full generality, for each integer $m\ge2$, let $f:=f_m$  be a function with the real domain $(0,m)$ defined as
$$f(x)=\frac1{x}\left(\binom x2+\binom{m-x+1}{2}\right).$$
Observe that $f$ is a convex function. It can be easily checked that $f$ has a unique global minimum at
$$\lambda_m:=\argmin f(x)[(0,m)]= \frac{\sqrt{2m^2+2m}}{2}.$$
Moreover, owing to the convexity of $f$,
$$\ell_m:=\argmin f(x)[[m]]\in\{\lfloor\lambda_m\rfloor, \lceil\lambda_m\rceil\}.$$
%Define $\ell_m$ as an integer in $\{\lfloor\lambda_m\rfloor, \lceil\lambda_m\rceil\}$ such that
%$$f(\ell_m)=\min\left\{f(\lfloor\lambda_m\rfloor), f(\lceil\lambda_m\rceil)\right\}.$$
	%As a convention, when $\lfloor\lambda_m\rfloor \neq\lceil\lambda_m\rceil$ but $f(\lfloor\lambda_m\rfloor)= f(\lceil\lambda_m\rceil)$, we always set $\ell_m = \lfloor\lambda_m\rfloor$.
For example, $\lambda_{20}=\sqrt{210}$, $\lfloor\lambda_{20}\rfloor = 14$, $\lceil\lambda_{20}\rceil = 15$, and $f(14)=8=f(15)$, so $\ell_{20} = 14$.

\begin{theorem}[\cite{ADR}]\label{23}~
\begin{enumerate}
    \item[(a)]	For all integers $r\ge0$, $r\le \ell<r(r+1)$ and $m=\ell+r$,  we have $\bar\eta_{1,m}\ge 1/f(\ell)$.
    \item[(b)]
    For every integer $m\geq 2$, we have $\bar\eta_{1,m}\le 1/f(\ell_m)$.
    \item[(c)] For $m=7$ and all $m\geq 10$, setting $r=m-\ell_m$, we have $r\le \ell_m<r(r+1)$. Consequently, $\bar \eta_{1,m} = 1/f(\ell_m).$
\end{enumerate}
\end{theorem}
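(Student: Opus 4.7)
The plan addresses the three parts in sequence: (a) and (b) provide matching bounds that (c) combines under an arithmetic condition on $\ell_m$. Both directions hinge on the same combinatorial quantity $f(\ell)$, which governs the appearance or non-appearance of a particular ``bridge'' gadget inside $G(n,p)$: part (a) forces such a gadget inside any sufficiently dense host $G$ at densities above the threshold, while part (b) shows that it cannot be avoided inside a carefully chosen extremal $G_\eps$ at densities below.

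For part (a), I would combine the absorbing method with Szemer\'edi's regularity lemma. Given $G$ with $\delta(G) \ge \bigl(\tfrac{1}{2}+\eps\bigr)n$, $m = \ell + r$ with $r \le \ell < r(r+1)$, and $p \ge n^{-1/f(\ell)-\mu}$, the plan is to first build a short absorbing $m$-path $P_{\mathrm{abs}}$ inside $G \cup G(n,p)$ capable of swallowing any small leftover set, then apply regularity to $G \setminus V(P_{\mathrm{abs}})$ to obtain a reduced graph of minimum degree at least $(1/2+\eps/2)$ proportion, which is Hamiltonian by Dirac's theorem. Inside each super-regular pair long $m$-paths can be built greedily or via the blow-up lemma. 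The step where the over-exponent enters is the stitching: each transition between two consecutive long $m$-paths requires finding a specific ``bridge'' on $\ell$ vertices supplying $\binom{\ell}{2}+\binom{m-\ell+1}{2}$ random edges. Since the edge-to-vertex ratio of this bridge equals $f(\ell)$, at $p \ge n^{-1/f(\ell)-\mu}$ the expected number of bridges between any pair of linear subsets is a positive power of $n$, and standard second-moment or Janson-type concentration yields their ubiquitous presence.

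For part (b), I would take $G_\eps$ to be two near-cliques on sides $A$ and $B$ of sizes close to $n/2$, connected by a sparse, structured bipartite skeleton that lifts the minimum degree to $\bigl(\tfrac12+\eps\bigr)n$. Any $m$-th power of a Hamilton cycle in $G_\eps \cup G(n,p)$ must traverse both sides and hence contains at least two disjoint ``crossings''. A combinatorial dissection of a single crossing, minimising the cost paid to $G(n,p)$ over the possible splits between the two sides, shows that one is always left with a fixed subgraph of $G(n,p)$ on $\ell_m$ vertices carrying at least $\binom{\ell_m}{2}+\binom{m-\ell_m+1}{2} = \ell_m f(\ell_m)$ edges---the minimiser $\ell_m$ of $f$ encoding precisely the worst case. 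For $p = n^{-\alpha}$ with $\alpha = 1/f(\ell_m)+\mu$, a first-moment count of such configurations is bounded by $n^{\ell_m(1 - \alpha f(\ell_m)) + O(1)} = n^{-\mu\ell_m f(\ell_m)+O(1)} \to 0$, and the 0-statement then follows after choosing $\eps$ small in terms of $\mu$.

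For part (c), the verification is purely arithmetic. Setting $r = m - \ell_m$ and using $\ell_m \in \{\lfloor\lambda_m\rfloor,\lceil\lambda_m\rceil\}$ with $\lambda_m = \sqrt{m(m+1)/2}$, the inequality $r \le \ell_m$ is equivalent to $\ell_m \ge m/2$ and holds trivially, while $\ell_m < r(r+1)$ reduces to a quadratic condition in $m$ that fails exactly for $m\in\{2,3,4,5,6,8,9\}$; the remaining integers $m=7$ and $m \ge 10$ can be verified either by direct inspection or by a routine asymptotic estimate. Once the arithmetic condition holds, parts (a) and (b) combine to pin down $\bar\eta_{1,m}=1/f(\ell_m)$. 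The main technical obstacle I anticipate lies in part (a): the bridge gadget must have density \emph{exactly} $f(\ell)$ (rather than larger) in order to match the lower bound, which forces a delicate combinatorial design tailored to the split $\ell+r=m$, and the absorbing path must be synchronised with the chosen random substructures so that they exist a.a.s.\ at the same value of $p$ without interfering with the regularity partition.
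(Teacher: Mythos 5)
The central difficulty lies in part~(b), where your proposed extremal graph is essentially backwards. You take $G_\eps$ to be ``two near-cliques on sides $A$ and $B$ connected by a sparse bipartite skeleton.'' With that construction the 0-statement fails outright: an $m$-th power of a Hamilton cycle can spend almost all of its time inside one near-clique (paying nothing to $G(n,p)$) and only needs $O(1)$ crossings between $A$ and $B$, so the cost charged to $G(n,p)$ is bounded independently of $n$, and for any $p=n^{-\alpha}$ with $\alpha<1$ such a configuration is present a.a.s. The correct extremal construction (the one used in the paper, and in~\cite{ADR}) is the \emph{complement} of this picture: a complete balanced \emph{bipartite} graph $U_1\cup U_2$ in which each side is internally almost empty, boosted only by an unbalanced complete bipartite graph between a small $W_i\subset U_i$ and $U_i\setminus W_i$. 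That structure forces any $m$-th power of a Hamilton cycle to alternate nearly evenly between the two sides, and --- crucially --- every edge lying \emph{within} a side must come from $G(n,p)$. Consequently, the quantity one must bound from below is $|E(P[A])|+|E(P[B])|$ for a long sub-$m$-path $P$, and the paper's Lemma~\ref{lem:pathedges} shows this is at least $f(\ell_m)|V(P)|-2m^2$, i.e.\ \emph{linearly} many random edges, not $O(1)$. Your ``two disjoint crossings'' dissection only yields a constant number of forced random edges and therefore cannot produce a nontrivial 0-statement.

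Your sketch of part~(a) is closer in spirit to the actual proof, but it misrepresents where $f(\ell)$ comes from. The random gadget that must be planted is not a single bridge on $\ell$ vertices: it is the \emph{braid graph} $B(\ell,r,t)$, a concatenation of $t$ cliques $K_\ell$ joined by $r$-bridges, with $t$ an enormous constant dictated by the Regularity Lemma. Its maximum subgraph density is $d_{B_t}=\frac{t\binom{\ell}{2}+(t-1)\binom{r+1}{2}}{t\ell-1}$, which is strictly less than $f(\ell)$ and increases to $f(\ell)$ only as $t\to\infty$. This is precisely why one gets an over-threshold rather than an ordinary threshold: for each fixed $\eps$ (hence fixed $t$) the 1-statement holds at $p\ge n^{-1/d_{B_t}}=n^{-1/f(\ell)-\mu(\eps)}$, with $\mu(\eps)>0$ vanishing only in the limit. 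Your write-up treats the gadget density as exactly $f(\ell)$, which collapses the distinction between threshold and over-threshold that the whole theorem is about. Part~(c), the arithmetic verification that $r\le\ell_m<r(r+1)$ holds for $m=7$ and $m\ge10$, is fine.
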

	
Recall that in \cite{DRRS,ADRRS} we have already found the $(1,m)$-Dirac exponents for $m\in\{2,3,4,8\}$. Also in \cite{ADRRS}, by an ad hoc argument, we have determined the $(1,5)$-Dirac over-exponent. The  remaining two cases, $m=6$ and $m=9$, were established separately in \cite{ADR}. These three additional cases are exceptional in that the over-exponents are not equal to $1/f(\ell_m)$ (see discussion in Section~\ref{extinct}).
Anyhow, in \cite{ADR} we have obtained the complete collection of $(1,m)$-Dirac thresholds and over-thresholds. They are summarized in Table~\ref{table:exponents}.
\begin{table}
    \renewcommand{\arraystretch}{1.3}
    \begin{tabular}{ | >{\columncolor{codelightgray}}c || c | c |c | c | c | c | c | c | c | c |}
	\hline
	\rowcolor{codelightgray}
	$m$ & 2\,\cite{DRRS} & 3\,\cite{ADRRS, NT} &         4\,\cite{ADRRS} & 5\,\cite{ADRRS} &                  6\,\cite{ADR} & 7\,\cite{ADR} & 8\,\cite{ADRRS} &    9\,\cite{ADR} & $\ge10$\,\cite{ADR}\\                \hline\hline
	$1/\eta_{1,m}$ & 1 & 1 & $\frac{3}{2}$ & - & - &      - & $3$ & - & -\\ \hline\hline
	$1/\bar\eta_{1,m}$ & - & - &- & $2$ & $\frac{9}      {4}$ & $\frac{13}{5}$ & - & $\frac{7}{2}$ &          $f(\ell_m)$\\ \hline
    \end{tabular}
    \caption{The  reciprocals of the $(1,m)$-Dirac exponents and over-exponents.}
    \label{table:exponents}
\end{table}

%********************** New results **********************
	
\subsection{New results}\label{newres}

As it will be explained in the next section, the proof of Theorem~\ref{21}(a), under the complementary assumption $\ell<r(r+1)$, can be easily adapted to yield a~generalization of Theorem~\ref{23}(a) to larger $k$.
For all $k\ge1$ and $m\ge k+1$, define a function $f=f_{k,m}$ over the real domain $(0,m)$ as
\[ f(x)=\frac1{x}\left(\binom x2+\binom{m-kx+1}{2}\right).\]
Note that $f_{1,m}=f_m$ was defined prior to Theorem~\ref{23} and that, in fact, setting $m=k\ell+r$ and $x=\ell$, all functions $f_{k,m}$ take the same form
$$f(\ell)=\frac1{\ell}\left(\binom \ell2+\binom{r+1}{2}\right).$$
Next, observe that
\begin{align*}
f(x) &= \frac12\left( x-1 + \frac{m(m+1)}{x} + k^2x - 2km - k \right)\\	
&= \frac12\left(\sqrt{x(k^2+1)} - \sqrt{\frac{m(m+1)}{x}} \right)^2 + \sqrt{(k^2+1)(m^2+m)} - \frac{(2m+1)k}{2} - \frac12.
\end{align*}
Thus, $f$ has a unique global minimum at
\[
\lambda:=\lambda_{k,m} = \argmin f(x)[(0,m)]=\sqrt{\frac{m(m+1)}{k^2+1}},
\]
for which
\begin{equation}\label{lambdakm}
f(\lambda) = \sqrt{(k^2+1)(m^2+m)} - \frac{(2m+1)k}{2} - \frac12.
\end{equation}
	Since $f$ is convex,
	\begin{align*}
	\ell_{k,m}:=\argmin f(x)[[m]]\in\{\lfloor\lambda\rfloor, \lceil\lambda\rceil\}
%\quad\mbox{be such that}\quad f(\ell_{k,m})=\min\left\{f(\lfloor\lambda\rfloor), f(\lceil\lambda\rceil)\right\}.
	\end{align*}
%(Again, if tied, we take $\ell_{k,m}=\lfloor\lambda\rfloor$.)
%Then, owing to the convexity of $f$, the value of $f(\ell_{k,m})$ is the global minimum of $f$ over the integer domain.

We may now generalize Theorem~\ref{23}(a) to arbitrary $k\ge1$. %and $m=k\ell+r$ with $\ell<r(r+1)$ is straightforward (see the next section)
 \begin{prop}\label{23k}
 For all integers $k\ge1$, $r\ge0$,  and $m=k\ell+r$, if $r\le \ell<r(r+1)$, then $\bar\eta_{k,m}\ge 1/f_{k,m}(\ell)$.
\end{prop}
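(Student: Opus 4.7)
Our strategy is to take the proof of Theorem~\ref{21}(a) from \cite{ADRRS} and run it under the complementary assumption $\ell<r(r+1)$: the same skeleton goes through, except that the counting of random edges required yields the over-threshold bound $n^{-1/f_{k,m}(\ell)-\mu}$ rather than the threshold $n^{-2/\ell}$. This is precisely the recipe by which Theorem~\ref{23}(a) was derived in \cite{ADR} for $k=1$, and the extension to arbitrary $k\ge1$ is essentially formal since the only place where $k$ enters the deterministic skeleton is through the initial application of the Koml\'os--S\'ark\"ozy--Szemer\'edi theorem \cite{KSS1996,KSS1998}, which provides the $k$-th power of a Hamilton cycle in $G$ in place of a single Hamilton cycle.

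Concretely, after applying the Regularity Lemma to $G$ (this is the source of the tower-type dependence $\mu_1(\eps)$ on $\eps$) and extracting from \cite{KSS1996,KSS1998} the $k$-th power of a Hamilton cycle $C$ in $G$, one upgrades $C$ to an $m$-th power via short connecting gadgets. Each gadget spans $\ell$ new vertices, and its completion to an $m$-th power segment requires exactly $\binom{\ell}{2}+\binom{m-k\ell+1}{2}=\binom{\ell}{2}+\binom{r+1}{2}$ random edges: $\binom{\ell}{2}$ to fill the $\ell$-clique on the block itself, and $\binom{r+1}{2}$ to account for the ``residue'' of $r$ extra long edges per vertex which the $k$-th power structure of $C$ cannot supply at the interfaces between consecutive blocks. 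A standard first-moment computation shows that the expected number of embedded gadgets in $G(n,p)$ is of order $n^{\ell}p^{\binom{\ell}{2}+\binom{r+1}{2}}$, so that for $p\ge n^{-1/f_{k,m}(\ell)-\mu}$ one has $\omega(n)$ of them a.a.s., and an application of the absorbing/connecting machinery from \cite{ADRRS,ADR} chains enough of them into a full $m$-th power of a Hamilton cycle.

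The two hypotheses enter in complementary ways: $r\le \ell$ is needed so that each $\ell$-block has room to accommodate the $(r+1)$-clique bridging to its neighboring block, while $\ell<r(r+1)$ is precisely the range in which $1/f_{k,m}(\ell)<2/\ell$, making the gadget-edge count (rather than the $\ell$-vertex count) the binding parameter in the first-moment bound. The main potential obstacle is verifying that each degree, regularity, and counting step from the $k=1$ proof of Theorem~\ref{23}(a) transfers to general $k$ with the $k$-th power of a path replacing a path and the threshold $k/(k+1)+\eps$ replacing $1/2+\eps$; since all of these estimates are structurally identical, this is exactly the ``easy adaptation'' alluded to in the preamble.
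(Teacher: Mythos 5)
Your high-level strategy is correct and matches the paper: re-run the $\ell\ge r(r+1)$ proof of Theorem~\ref{21}(a) from \cite{ADRRS} under the complementary assumption, exactly as done for $k=1$ in \cite{ADR}. However, the mechanism you describe has concrete errors that obscure precisely why an \emph{over}-threshold (rather than an ordinary threshold) appears, which is the whole point of the proposition.

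The gadget that must be supplied by $G(n,p)$ is not a single block on $\ell$ vertices carrying $\binom{\ell}{2}+\binom{r+1}{2}$ edges; it is the braid $B(\ell,r,t)$ on $t\ell$ vertices with $t\binom{\ell}{2}+(t-1)\binom{r+1}{2}$ edges, whose length $t=t(\eps)$ is forced to be enormous by the Regularity Lemma. The relevant parameter for the Janson-type concentration (Proposition~5.8 of \cite{ADRRS}) is the maximum density $m_{B_t}$, and under $\ell<r(r+1)$ this maximum is attained by the whole braid, giving $m_{B_t}=d_{B_t}=\bigl(t\binom{\ell}{2}+(t-1)\binom{r+1}{2}\bigr)/(t\ell-1)$, which is a \emph{strictly increasing function of $t$} approaching $f_{k,m}(\ell)$ from below. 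The 1-statement thus holds for $p\ge Cn^{-1/d_{B_t}}$, and one then sets $\mu(\eps)=1/d_{B_t}-1/f_{k,m}(\ell)>0$; the $\mu$-dependence in Definition~\ref{over}(i) is exactly this gap, and it vanishes only as $t\to\infty$. Your account, by amortizing the bridge edges into a single $\ell$-block, collapses this $t$-dependence and so never explains where $\mu$ comes from.

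Moreover, your first-moment claim is numerically wrong: at $p=n^{-1/f_{k,m}(\ell)-\mu}$ with $\mu>0$, the expected count you write is $n^{\ell}p^{\binom{\ell}{2}+\binom{r+1}{2}}=n^{-\mu\bigl(\binom{\ell}{2}+\binom{r+1}{2}\bigr)}=o(1)$, not $\omega(n)$. The favorable balance occurs only for the full braid $B_t$ (whose $(t-1)$ rather than $t$ bridge factors and $t\ell-1$ denominator matter), and even then a first moment alone does not yield an a.a.s.\ abundance of copies — that requires the concentration supplied by Janson's inequality. Finally, a smaller point: the deterministic side does not extract a $k$-th power Hamilton cycle from $G$ via Koml\'os--S\'ark\"ozy--Szemer\'edi; it produces many constant-length blow-ups $P_{(k+1)t}(\ell)$ in $G$, and $k+1$ disjoint braids from $G(n,p)$ complete these to $m$-paths which are then assembled by the Connecting/Reservoir/Absorbing/Covering machinery.
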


Note that $r\le \ell<r(r+1)$ if and only if $\frac m{k+1}\le\ell<(m-k\ell)(m-k\ell+1)$. It turns out (see Fact~\ref{AD} in Appendix) that for $m$ sufficiently large,
\begin{equation}\label{ellkm}
\frac m{k+1}\le\ell_{k,m}<(m-k\ell_{k,m})(m-k\ell_{k,m}+1),
\end{equation}
that is, the assumptions of Proposition~\ref{23k} are satisfied for $\ell=\ell_{k,m}$ and, consequently,
 $\bar\eta_{k,m}\ge 1/f_{k,m}(\ell)$.
Unlike for $k=1$, in general case we have fallen short of showing a~complementary 0-statement. Instead, we are only able to show the following, quite tight estimates.

\begin{theorem}\label{new_bounds}
For all integers $k\ge2$, there exists a constant $m_k$ such that for all $m\ge m_k$,
$$\frac1{f(\ell_{k,m})}\le \bar\eta_{k,m}\le\frac1{f(\lambda_{k,m})}.$$
\end{theorem}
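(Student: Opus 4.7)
My plan splits into the two inequalities. For the lower bound $\bar\eta_{k,m} \ge 1/f_{k,m}(\ell_{k,m})$, I would apply Proposition~\ref{23k} at $\ell = \ell_{k,m}$: its precondition $r \le \ell_{k,m} < r(r+1)$, with $r = m - k\ell_{k,m}$, coincides with~\eqref{ellkm}, which by Fact~\ref{AD} of the Appendix is satisfied for every $m \ge m_k$. This immediately yields the desired inequality.

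For the upper bound $\bar\eta_{k,m} \le 1/f_{k,m}(\lambda_{k,m})$, I plan a first-moment argument yielding the 0-statement at $\eta = 1/f_{k,m}(\lambda_{k,m})$. Given $\mu > 0$, I would take $\eps > 0$ small enough in terms of $\mu$ and let $G_\eps$ be the complete $(k+1)$-partite graph on balanced parts $V_1, \ldots, V_{k+1}$ of size $n/(k+1)$, augmented inside each $V_i$ by a pseudorandom $\eps n$-regular graph; then $\delta(G_\eps) \ge (k/(k+1) + \eps)n$, and $G_\eps$ itself contains no copy of $C_n^m$. Each potential copy of $C_n^m$ in $G_\eps \cup G(n, p)$ corresponds to a cyclic ordering of $V(G_\eps)$, equivalently to a part-sequence $\sigma : \ZZ_n \to [k+1]$ with $|\sigma^{-1}(c)| = n/(k+1)$. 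Letting $X(\sigma)$ be the number of intra-part edges of the corresponding copy of $C_n^m$ and $X_1(\sigma)$ the number of those already present in $G_\eps$, the probability that the copy occurs equals $p^{X(\sigma) - X_1(\sigma)}$. Since the intra-part subgraphs of $G_\eps$ have density $O(\eps)$, standard concentration over the pseudorandom construction gives $X_1(\sigma) = O(\eps\,X(\sigma))$ uniformly over $\sigma$, and summing over orderings yields
\[
\EE\bigl[\#\,C_n^m \subseteq G_\eps \cup G(n, p)\bigr] \;\le\; n!\cdot p^{X_{\min}(1 - O(\eps))}, \qquad X_{\min} := \min_\sigma X(\sigma).
\]

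The crux is the combinatorial lower bound $X_{\min} \ge n\,f_{k,m}(\lambda_{k,m}) - o(n)$. For periodic patterns consisting of $k+1$ equal blocks of size $x$ (so period $(k+1)x$), a direct count gives $X(\sigma)/n = f_{k,m}(x)$; optimizing $f$ over the real interval $(0, m)$ yields $\inf f = f(\lambda_{k,m})$ at the value computed in~\eqref{lambdakm}. To extend this bound to aperiodic $\sigma$, I plan to use a standard averaging argument: summing the intra-part pair count over all cyclic windows of length $m+1$ and applying convexity of $\sum_c \binom{x_c}{2}$ subject to $\sum_c x_c = m+1$ produces the same lower bound per window. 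Combined with Stirling's formula, the bound on $\EE[\#\,C_n^m]$ then tends to $0$ whenever $p \le n^{-1/f_{k,m}(\lambda_{k,m}) - \mu}$ (for $\eps$ chosen small enough relative to $\mu$), giving the desired 0-statement.

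The hardest step will be establishing the combinatorial lower bound on $X(\sigma)$ for arbitrary, possibly aperiodic, patterns $\sigma$---in particular, reducing them to the periodic case via convexity and averaging. This is also the step in which the continuous optimum $f(\lambda_{k,m})$ appears, in place of the integer optimum $f(\ell_{k,m})$, explaining why the upper and lower bounds of Theorem~\ref{new_bounds} differ slightly.
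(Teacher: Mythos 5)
Your lower-bound argument is exactly the paper's: invoke Proposition~\ref{23k} at $\ell=\ell_{k,m}$ and check, via Fact~\ref{AD}, that the precondition~\eqref{ellkm} holds for all large $m$. That half is fine.

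The upper bound is where you diverge from the paper, and your approach as sketched has two genuine gaps. First, your random construction does not enjoy the one structural feature that powers the paper's $0$-statement. In the paper's $G^{(k)}_\eps$ \emph{all} intra-part edges are incident to a small excisable set $W$; removing $W$ from a Hamiltonian $m$-cycle therefore leaves a long sub-$m$-path $P$ all of whose intra-part edges lie in $G(n,p)$ alone, and the remainder of the proof is a first-moment bound on dense \emph{small} $(L_0,M_0)$-subgraphs of $G(n,p)$ (Fact~\ref{Markov}(a)). With a pseudorandom $\eps n$-regular graph inside each part there is no such excisable set, so you are forced to sum $p^{X(\sigma)-X_1(\sigma)}$ over all $n!$ embeddings and must control $\max_\sigma X_1(\sigma)$, not its average. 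Concentration for a typical $\sigma$ does not give uniformity: a fixed $\eps n$-regular pseudorandom graph contains copies of the $m$-th power of paths of length $\Theta(\log n)$, and more generally it is far from clear that no embedding can route a constant fraction of its intra-part edges through $G_\eps$; indeed, if even one embedding of $C_n^m$ lands entirely in $G_\eps$, your expectation never goes to zero. You assert ``$G_\eps$ contains no $C_n^m$'' and ``$X_1(\sigma)=O(\eps X(\sigma))$ uniformly'' without proof, and these are precisely the points the bipartite $W_i$-construction is designed to make trivial.

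Second, and more seriously, the averaging argument you propose for the combinatorial crux does not reach the required bound. Summing $\sum_c\binom{x_c}{2}$ over all windows of $m+1$ consecutive vertices, minimising by convexity at $x_c=(m+1)/(k+1)$, and then converting window-counts to edge-counts via the worst-case multiplicity $m$ yields only
\[
X(\sigma)\;\ge\;\frac{n(m+1)(m-k)}{2m(k+1)}\;\approx\;\frac{m}{2(k+1)}\,n,
\]
whereas you need
\[
X(\sigma)\;\ge\;f_{k,m}(\lambda_{k,m})\,n-o(n)\;\approx\;m\bigl(\sqrt{k^2+1}-k\bigr)\,n,
\]
and $\sqrt{k^2+1}-k=\bigl(\sqrt{k^2+1}+k\bigr)^{-1}>\tfrac{1}{2(k+1)}$ strictly for every $k\ge1$. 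The discrepancy arises because the optimal profile underlying $f_{k,m}(\lambda_{k,m})$ is \emph{not} the uniform one $x_c=(m+1)/(k+1)$: it is a block pattern, with $x\approx\lambda_{k,m}$ consecutive vertices from one part followed by a run from the other parts, and the contribution of the $\binom{m-kx+1}{2}$ bridge term is exactly what your window count fails to capture. The paper needs a genuinely different argument here (Lemma~\ref{weak}, proved by the REWIRE algorithm plus Jensen applied to both $\binom{x_i}{2}$ and $\binom{m+1-y_i}{2}$, Lemma~\ref{perform}), and simple window-averaging does not recover it. You correctly identified this as the hardest step, but the proposed technique would leave a gap that no choice of $\eps$ can close.
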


The difference $\frac1{f(\lambda_{k,m})}-\frac1{f(\ell_{k,m})}$ is shown in Appendix to be less than $128k^5/m^3$, so it converges to 0 with $m\to\infty$ (see Fact~\ref{diff} therein.)

\begin{rem}
In fact, for some rather rare instances, Theorem~\ref{new_bounds} does determine $\bar\eta_{k,m}$ exactly. Namely, when $\lambda_{k,m}$ happens to be an integer, so that $\ell_{k,m}=\lambda_{k,m}$, and, at the same time, \eqref{ellkm} hold. We will show now that for any $k$ there are infinitely many values of $m$ for which this happens. Let $\lambda_{k,m} = p$, where $p$ is an integer. Equivalently, $\frac{m^2+m}{k^2+1}=p^2$ and hence, the quadratic equation of $m$,
\[
m^2+m - (k^2+1)p^2=0,
\]
has a positive integer solution if $\Delta = 1+4(k^2+1)p^2$ is a perfect square, in which case $m=\frac{-1+\sqrt{\Delta}}{2}$.
Thus, we want to know for which integers $p$ and $q$, we have
\[
1+4(k^2+1)p^2 = q^2.
\]
The latter is a well-known Pell's equation~(see,~e.g.,~\cite{Pell}). This equation has infinitely many distinct integer solutions $(p,q)$ (as already proved by Lagrange). For instance, such solutions exist for $k=2$, $m=4$ (corresponding to $p=2$ and $q=9$) and for $k=3$, $m=9$ (corresponding to $p=3$ and $q=19$). Thus, $\lambda_{2,4}=2$ and $\lambda_{3,9}=3$.
\end{rem}

Since Theorem~\ref{new_bounds} applies only to large values of $m$, we complement it by determining the missing exponents and over-exponents for $k=2$ and all $m\le 20$ (except for $m=19$), as well as for $k=3$ and $m\le20$. Recall that so far the value of $\eta_{2,m}$ has been known only for $m\in\{3,4,5,6,7\}\cup\{9,14\}$, while $\eta_{3,m}$ -- for $m\in\{4,\dots,10, 12,13, 16, 20\}$,   and that no over-exponents $\bar\eta_{2,m}$ or $\bar\eta_{3,m}$ have been discovered before. %We also calculate $\bar\eta_{3,17}$, a ``degenerate'' case in some sense (see Section~\ref{small}).

\begin{prop}\label{k2} We have %the following $(2,m)$-Dirac exponents:
$$\eta_{2,11}=\frac25,\quad \eta_{2,13}=\frac13,\quad\eta_{2,16}=\frac27,\quad\eta_{2,18}=\frac14,\quad\eta_{2,20}=\frac29\,;$$
%$(2,m)$-Dirac over-exponents:
$$\bar\eta_{2,8}=\frac12,\quad \bar\eta_{2,10}=\frac49,\quad \bar\eta_{2,12}=\frac5{13},\quad\bar\eta_{2,15}=\frac27,\quad\bar\eta_{2,17}=\frac{7}{27}\,;$$
$$ \eta_{3,15}=\frac25,\quad \eta_{3,18}=\frac13,\quad \eta_{3,19}=\frac13\,;$$
and %one $(3,m)$-Dirac over-exponent:
$$\bar\eta_{3,11}=\frac12,\quad \bar\eta_{3,14}=\frac49,\quad\bar\eta_{3,17}=\frac5{13}.$$
\end{prop}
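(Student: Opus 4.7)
The plan is to prove each claimed value by establishing a matching pair of $1$- and $0$-statements, handling the Dirac exponents $\eta_{k,m}$ and over-exponents $\bar\eta_{k,m}$ separately but by the same template.

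For the $1$-statements I would verify that each case admits a choice of $(\ell,r)$ with $m=k\ell+r$ that activates a previously established tool. In the Dirac-exponent cases I would take $\ell=\lceil m/k\rceil$ (so $0\le r<k$), check the hypothesis $\ell\ge r(r+1)$ of Theorem~\ref{21}(a), and read off $\eta_{k,m}\ge 2/\ell$; the pairs $(\ell,r)=(5,1),(6,1),(7,2),(8,2),(9,2)$ cover $\eta_{2,11},\eta_{2,13},\eta_{2,16},\eta_{2,18},\eta_{2,20}$, while $(5,0),(6,0),(6,1)$ cover $\eta_{3,15},\eta_{3,18},\eta_{3,19}$. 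In the over-exponent cases I would instead take the unique $(\ell,r)$ with $r\le \ell<r(r+1)$ and invoke Proposition~\ref{23k}; a direct computation of $f_{k,m}(\ell)$ recovers each claimed value---for instance $(\ell,r)=(3,2)$ gives $f=2$ and hence $\bar\eta_{2,8}=\bar\eta_{3,11}\ge \tfrac12$, $(\ell,r)=(4,2)$ gives $f=\tfrac94$ and $\bar\eta_{2,10}=\bar\eta_{3,14}\ge \tfrac49$, $(\ell,r)=(5,2)$ gives $f=\tfrac{13}{5}$ and $\bar\eta_{2,12}=\bar\eta_{3,17}\ge \tfrac{5}{13}$, and $(\ell,r)=(6,3),(7,3)$ similarly handle $\bar\eta_{2,15}$ and $\bar\eta_{2,17}$.

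For each case I would then exhibit, as the $0$-statement, a sequence of graphs $G_\epsilon$ with $\delta(G_\epsilon)\ge(\tfrac{k}{k+1}+\epsilon)n$ in which $G_\epsilon\cup G(n,p)$ a.a.s.\ fails to contain the $m$-th power of a Hamilton cycle once $p$ drops below the claimed threshold. Following the blow-up template developed in \cite{ADRRS,ADR}, I would take $G_\epsilon$ to be a near-balanced complete $(k+1)$-partite graph, possibly with a controlled modification inside one part, in which every $m$-th power of a Hamilton cycle has a rigid local structure: partitioning such a cycle into arcs of length $\ell$, each arc must carry a prescribed set of $\binom{\ell}{2}+\binom{r+1}{2}$ edges that are absent from $G_\epsilon$ and must therefore come from $G(n,p)$. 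A first-moment calculation then shows that the expected number of Hamilton $m$-th powers embedding in $G_\epsilon\cup G(n,p)$ is $o(1)$ precisely when $p$ drops below the claimed threshold, yielding $\eta_{k,m}\le 2/\ell$ in the Dirac case and $\bar\eta_{k,m}\le 1/f_{k,m}(\ell)$ in the over-exponent case.

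The main obstacle is the $0$-statement in the Dirac-exponent cases, since Theorem~\ref{21}(b) applies only when $m\ge(k+1)(\ell-1)$, a hypothesis that fails for every instance in the first group (for example, for $\eta_{2,11}=2/5$ one needs the $0$-statement at $\ell=5$, but Theorem~\ref{21}(b) reaches only $\ell\le 4$). Overcoming this requires designing a new extremal construction in which the partition rigidity of $G_\epsilon$ forces strictly more random edges per length-$\ell$ arc than the standard $\binom{\ell}{2}$, so that the first-moment bound matches the sharp exponent $2/\ell$; the analogous rigidity---with $\binom{\ell}{2}+\binom{r+1}{2}$ forced edges per arc---then also yields the over-exponent $0$-statements. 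Verifying this rigidity case-by-case is the crux of the proof.
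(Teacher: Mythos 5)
Your $1$-statement computations are correct in every case: the pairs $(\ell,r)$ you list do satisfy the hypotheses of Theorem~\ref{21}(a) or Proposition~\ref{23k}, and the stated lower bounds on $\eta_{k,m}$ and $\bar\eta_{k,m}$ follow by direct evaluation of $2/\ell$ or $1/f_{k,m}(\ell)$. (For $\eta_{3,19}$ the paper actually just invokes monotonicity from the neighbouring values, but your pair $(\ell,r)=(6,1)$ is also fine.)

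The $0$-statement strategy, however, is both different from the paper's and incomplete in a way that would stop the proof. You propose a first-moment bound on the expected number of Hamilton $m$-th powers embedded in $G_\eps\cup G(n,p)$. Such a first moment on a spanning structure, whose candidate count grows as $n!$ while the number of forced random edges grows linearly in $n$, generically does not recover the sharp exponent — the same reason a naive first moment on Hamilton cycles in $G(n,p)$ overshoots. The paper instead localizes the argument: it shows that if $H=G_\eps^{(k)}\cup G(n,p)$ contains a Hamilton $m$-th power while $G(n,p)$ has at most $\eps n$ copies of a suitable $K_s$, then after deleting $W$ and one vertex from each $K_s$ one extracts a sub-$m$-path $P$ of \emph{constant} length $L$ whose restriction $F=\bigcup_t P[V(P)\cap U_t]$ lies entirely in $G(n,p)$ and has at least $M$ edges; Fact~\ref{Markov}(a) applied to $(L,M)$-graphs with constant $L,M$ then gives the $o(1)$ probability. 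The exponent is read off the \emph{ratio} $M/L$ of a fixed finite graph, not from a global union bound over Hamilton $m$-th powers.

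The work you defer as "verifying rigidity case-by-case" is also exactly where the paper does the essential counting, and the necessary quantitative tools are absent from your plan. The paper establishes three lemmas — Fact~\ref{hpath} (each $P_t$ contains a spanning $h$-path once $m\ge k(s-1)+h$), Observation~\ref{ob} (double-counting $i$-far edges over length-$(m+1)$ segments), and Observation~\ref{ob2} (a linear inequality coupling the counts $x_i$ and $x_j$) — and in the harder cases ($m=12,15,17$ for $k=2$ and $m=17$ for $k=3$) combines them via a small linear-programming optimization. A rigid decomposition into length-$\ell$ arcs, as you suggest, does not yield the required sharp $M/L$. Finally, your construction sketch is imprecise: $G_\eps^{(k)}$ inserts the unbalanced bipartite graph inside \emph{every} part $U_i$ (between $W_i$ and $U_i\setminus W_i$), not "possibly in one part," since otherwise vertices in the unmodified parts would have degree only $\tfrac{k}{k+1}n$ and the Dirac condition $\delta(G_\eps)\ge(\tfrac{k}{k+1}+\eps)n$ would fail.
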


All known values of exponents and over-exponents, are summarized in Table~\ref{table:exponents2} for $k=2$ and  in Table~\ref{table:exponents3} for $k=3$.

\begin{table}
    \renewcommand{\arraystretch}{1.3}
    \begin{tabular}{ | >{\columncolor{codelightgray}}c || c | c |c | c | c | c | c | c | c | c | c | c | c | c | c |}
	\hline
	\rowcolor{codelightgray}
	$m$ & 3--5 &   6--7 &      $8^*$ & 9 &                  $10^*$ & $11^*$ & $12^*$ &    $13^*$ & $14$ & $15^*$ & $16^*$ & $17^*$ & $18^*$ & 19 & $20^*$ \\                \hline\hline
	$1/\eta_{2,m}$ &  1 & $\frac32$ & - & 2 & - &  $\frac52$ & - & 3 & 3 & - & $\frac72$ & - & 4 & ? & $\frac92$ \\ \hline\hline
	$1/\bar\eta_{2,m}$ & - & - & 2 & - & $\frac{9}{4}$ & - & $\frac{13}5$ & - &  -  & $\frac72$ & - & $\frac{27}7$ & - & ? & -\\ \hline
    \end{tabular}
    \caption{The  reciprocals of all known  $(2,m)$-Dirac exponents and over-exponents. New results are indicated by asterisk.}
    \label{table:exponents2}
\end{table}

\begin{table}
    \renewcommand{\arraystretch}{1.3}
    \begin{tabular}{ | >{\columncolor{codelightgray}}c || c | c |c | c | c | c | c | c | c | c | c | }
	\hline
	\rowcolor{codelightgray}
	$m$ & 4--7 & 8--10 & $11^*$ & 12--13 & $14^*$ & $15^*$ & 16 & $17^*$ & $18^*$ & $19^*$ & 20 \\                \hline\hline
	$1/\eta_{3,m}$ &  1 & $\frac32$ & - & 2 & - & - & $\frac52$ & - & 3 & 3 & 3 \\ \hline\hline
	$1/\bar\eta_{3,m}$ & - &  - & 2 & - & $\frac94$ & - & - &  $\frac{13}5$ &  - & - & -\\ \hline
    \end{tabular}
    \caption{The  reciprocals of all known  $(3,m)$-Dirac exponents and over-exponents. New results are indicated by asterisk.}
    \label{table:exponents3}
\end{table}
%********************** Upper bound machinery **********************

\section{Upper bound machinery}\label{mach}
	
In this section, we explain how the upper proof technique from \cite{ADRRS} and \cite{ADR} differentiates between thresholds and over-thresholds. The proofs of upper bounds in \cite{ADRRS} and \cite{ADR} were based on the standard Absorbing Method whose four main ingredients, Connecting, Reservoir, Absorbing, and Covering Lemmas, all claim the existence of certain constant length $m$-paths in $G\cup G(n,p)$, from which the ultimate $m$-th power of a Hamiltonian cycle is built. The proofs of each of these four lemmas consist of a deterministic part and a probabilistic part.

Let $P_s$ denote the $k$-path on $s$ vertices and, for a graph $F$, let $F(\ell)$ be the \emph{$\ell$-blow-up} of $F$ obtained from $F$ by replacing each vertex $v$ by an independent set $U_v$ of $\ell$ vertices (all sets $U_v$ pairwise vertex-disjoint) and replacing each edge $\{v,w\}$ by the complete bipartite graph between $U_v$ and $U_w$.

%********************** Braids and their densities **********************

\subsection{Braids and their densities}\label{bra}

In the deterministic part, from the Dirac assumption $\delta(G)\ge \left(\tfrac k{k+1}+\eps\right)n$, one derives the existence of many copies of suitable $k$-paths $P_{(k+1)t}$ in $G$ and, based on that, many copies of the $\ell$-blow-ups $P_{(k+1)t}(\ell)$ of $P_{(k+1)t}$, also in $G$ (cf. \cite[Lemma 5.15]{ADRRS}). What is still missing from a full copy of an $m$-path is a graph consisting of $k+1$ vertex-disjoint copies of a graph $B(\ell,r,t)$ called a \emph{braid}.
In Figure~\ref{fig:decomp} we present the decomposition of an 8-path into a 3-blow-up of a 2-path (black edges) and 3 copies of the braid graph $B(3,2,2)$ (red edges). Note that here $k=2$ and $8=2\times3+2$.

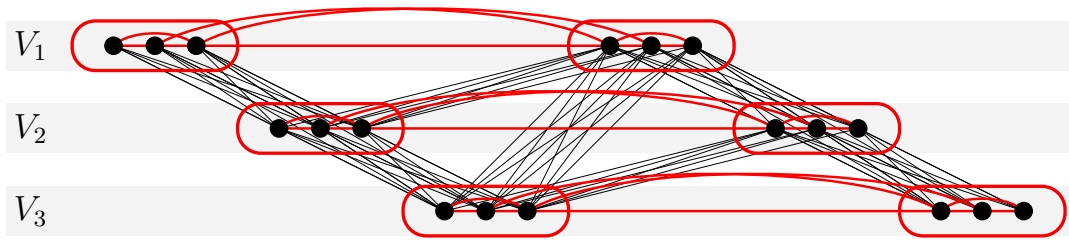
\begin{figure}[t]
\captionsetup[subfigure]{labelformat=empty}

\scalebox{1.1}
{
\begin{tikzpicture}

\coordinate (v1) at (0,0) {};
\coordinate (v2) at (0.5,0) {};
\coordinate (v3) at (1,0) {};
\coordinate (v4) at (2,-1) {};
\coordinate (v5) at (2.5,-1) {};
\coordinate (v6) at (3,-1) {};
\coordinate (v7) at (4,-2) {};
\coordinate (v8) at (4.5,-2) {};
\coordinate (v9) at (5,-2) {};
\coordinate (v10) at (6,0) {};
\coordinate (v11) at (6.5,0) {};
\coordinate (v12) at (7,0) {};
\coordinate (v13) at (8,-1) {};
\coordinate (v14) at (8.5,-1) {};
\coordinate (v15) at (9,-1) {};
\coordinate (v16) at (10,-2) {};
\coordinate (v17) at (10.5,-2) {};
\coordinate (v18) at (11,-2) {};

% deterministic edges

\foreach \i in {3,6,9,12,15} {
    \pgfmathsetmacro{\j}{\i+1}
    \draw[line width=0.1mm, color=black]  (v\i) -- (v\j);
};

\foreach \i in {2,3,5,6,8,9,1,12,14,15} {
    \pgfmathsetmacro{\j}{\i+2}
    \draw[line width=0.1mm, color=black]  (v\i) -- (v\j);
};

\foreach \i in {1,...,15} {
    \pgfmathsetmacro{\j}{\i+3}
    \draw[line width=0.1mm, color=black]  (v\i) -- (v\j);
};

\foreach \i in {1,...,14} {
    \pgfmathsetmacro{\j}{\i+4}
    \draw[line width=0.1mm, color=black]  (v\i) -- (v\j);
};

\foreach \i in {1,...,13} {
    \pgfmathsetmacro{\j}{\i+5}
    \draw[line width=0.1mm, color=black]  (v\i) -- (v\j);
};

\foreach \i in {1,...,12} {
    \pgfmathsetmacro{\j}{\i+6}
    \draw[line width=0.1mm, color=black]  (v\i) -- (v\j);
};

% path edges

%\foreach \i in {1,...,17} {
 %   \pgfmathsetmacro{\j}{\i+1}
  %  \draw[line width=0.6mm, color=black]  (v\i) -- (v\j);
%};

% braid edges

\foreach \i in {1,2,4,5,7,8,10,11,13,14,16,17} {
    \pgfmathsetmacro{\j}{\i+1}
    \draw[line width=0.3mm, color=red]  (v\i) -- (v\j);
};

\foreach \i in {3,6,9} {
    \pgfmathsetmacro{\j}{\i+7}
    \draw[line width=0.3mm, color=red]  (v\i) -- (v\j);
};

\draw[line width=0.3mm, color=red] (v1) .. controls (0.2, 0.2) and (0.8, 0.2) .. (v3);
\draw[line width=0.3mm, color=red] (v4) .. controls (2.2, -0.8) and (2.8, -0.8) .. (v6);
\draw[line width=0.3mm, color=red] (v7) .. controls (4.2, -1.8) and (4.8, -1.8) .. (v9);
\draw[line width=0.3mm, color=red] (v10) .. controls (6.2, 0.2) and (6.8, 0.2) .. (v12);
\draw[line width=0.3mm, color=red] (v13) .. controls (8.2, -0.8) and (8.8, -0.8) .. (v15);
\draw[line width=0.3mm, color=red] (v16) .. controls (10.2, -1.8) and (10.8, -1.8) .. (v18);

\draw[line width=0.3mm, color=red] (v2) .. controls (1.5, 0.6) and (5, 0.6) .. (v10);
\draw[line width=0.3mm, color=red] (v3) .. controls (2, 0.6) and (5.5, 0.6) .. (v11);

\draw[line width=0.3mm, color=red] (v5) .. controls (3.5, -0.4) and (7, -0.4) .. (v13);
\draw[line width=0.3mm, color=red] (v6) .. controls (4, -0.4) and (7.5, -0.4) .. (v14);

\draw[line width=0.3mm, color=red] (v8) .. controls (5.5, -1.4) and (9, -1.4) .. (v16);
\draw[line width=0.3mm, color=red] (v9) .. controls (6, -1.4) and (9.5, -1.4) .. (v17);

% path vertices

\foreach \i in {1,...,18} {
    \draw[fill=black, inner sep=0pt] (v\i) circle[radius=3pt] node[below] {};
};

\draw[red,line width=0.4mm, rounded corners=8pt, inner sep=0pt] (-0.5, 0.3) rectangle (1.5, -0.3) node[midway, above=10pt] {};

\draw[red,line width=0.4mm, rounded corners=8pt, inner sep=0pt] (1.5, -0.7) rectangle (3.5, -1.3) node[midway, above=10pt] {};

\draw[red,line width=0.4mm, rounded corners=8pt, inner sep=0pt] (3.5, -1.7) rectangle (5.5, -2.3) node[midway, above=10pt] {};

\draw[red,line width=0.4mm, rounded corners=8pt, inner sep=0pt] (5.5, 0.3) rectangle (7.5, -0.3) node[midway, above=10pt] {};

\draw[red,line width=0.4mm, rounded corners=8pt, inner sep=0pt] (7.5, -0.7) rectangle (9.5, -1.3) node[midway, above=10pt] {};

\draw[red,line width=0.4mm, rounded corners=8pt, inner sep=0pt] (9.5, -1.7) rectangle (11.5, -2.3) node[midway, above=10pt] {};

\foreach \i in {1,2,3} {
    \pgfmathsetmacro{\j}{\i-1};
    \node[color=black] at (-1, -\j) {$V_{\i}$};
}

\foreach \i in {-0.3,-1.3,-2.3} {
    \pgfmathsetmacro{\j}{\i+0.6}
    \fill[black,ultra nearly transparent] (-1.3,\i) rectangle (11.65,\j);
}

\end{tikzpicture}
}%scale

\caption{The decomposition of an $8$-path.}
\label{fig:decomp}	
\end{figure}

Roughly speaking, a braid graph consists of an ordered collection of $\ell$-cliques  joined sequentially by bridge-like structures.

\newpage

\begin{dfn}\label{bra}~
\begin{enumerate}
    \item[(a)] For $r\ge1$, two sequences of vertices $\seq{v} = (v_1, v_2, \ldots, v_r)$ and $\seq{u} = (u_1, u_2, \ldots, u_r)$ of a graph $G$ form an \emph{$r$-bridge} if each $v_i$ is adjacent in $G$ to all $u_1, u_2, \ldots, u_{i}$, $i \in [r]$.
    \item[(b)]	For $t\ge1$, $\ell\ge2$, and $1\le r\le\ell$, the \emph{braid graph} $B(\ell,r,t)$ consists of $t$ vertex-disjoint $\ell$-cliques $K_\ell^{(1)},K_\ell^{(2)},\ldots,K_\ell^{(t)}$, with vertices ordered arbitrarily,  where for each $i\in[t-1]$, the last $r$ vertices of $K_\ell^{(i)}$ and the first $r$ vertices of $K_\ell^{(i+1)}$ form an $r$-bridge. We write shortly $B_t:=B(\ell,r,t)$ and, for any integer $s\geq 1$, denote by $sB_t$ the union of $s$ vertex-disjoint copies of $B_t$.
\end{enumerate}
\end{dfn}

In the probabilistic part of the proof of each lemma, based on a version of Janson's inequality, one a.a.s.~complements many of the copies of $P_{(k+1)t}(\ell)$ obtained in the deterministic part, with a suitable copy of $(k+1)B(\ell,r,t)$, coming entirely from $G(n,p)$. We now state the appropriate version of Janson's inequality used in these proofs.

For a graph $F$ with $v_F\ge1$ vertices and $e_F$ edges, set
\[d_F:=\frac{e_F}{v_F-1} \quad \text{ and } \quad m_F:=\max_{H\subseteq F}d_H.\]

\begin{lemma}[\cite{ADRRS}, Prop. 5.8]
Let $F$ be a graph with $e_F\ge1$ and, for $\tau>0$, let $\mathcal F$ be a family of copies of $F$ in $K_n$ with $|\mathcal F|\ge\tau n^{v_F}$. Furthermore, for some $C>0$, let $p\ge Cn^{-1/m_F}$ and let $X$ be the number of copies of $F\in\mathcal F$ that are present in $G(n,p)$.  Then, for some $C'$ depending on $C$ and $F$,
\[
\mathbb P(X \ge\tau n^{v_F}p^{e_F}/2)\ge1-\exp\{-C'n\}.
\]
\end{lemma}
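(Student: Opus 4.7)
The plan is to apply the lower-tail version of Janson's inequality to the indicator random variables $\{I_{F'}\}_{F'\in\mathcal F}$, where $I_{F'}$ records the event that every edge of the copy $F'$ lies in $G(n,p)$. Setting $X=\sum_{F'\in\mathcal F}I_{F'}$, one has $\mu:=\mathbb EX=|\mathcal F|p^{e_F}\ge\tau n^{v_F}p^{e_F}$, and Janson's inequality gives
\[
\mathbb P(X\le\mu/2)\le\exp\!\left(-\frac{\mu^2}{8(\mu+\Delta)}\right),
\]
where $\Delta=\sum \mathbb E[I_{F'}I_{F''}]$, the sum being over ordered pairs of distinct copies $F',F''\in\mathcal F$ whose intersection contains at least one edge. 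It therefore suffices to show that $\mu^2/(\mu+\Delta)\ge c\,n$ for some positive constant $c=c(\tau,C,F)$.

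The main estimate is the bound on $\Delta$. I would partition the contributing pairs according to the isomorphism type of the shared subgraph $H=F'\cap F''$, which satisfies $e_H\ge 1$ and $v_H\ge 2$. Since a fixed copy of $H$ can be extended to a copy of $F$ in at most $O(n^{v_F-v_H})$ ways, standard counting yields
\[
\Delta\le\sum_{H\subseteq F,\;e_H\ge 1} c_H\,\frac{(n^{v_F}p^{e_F})^2}{n^{v_H}p^{e_H}},
\]
with constants $c_H$ depending only on $F$. The hypothesis $p\ge Cn^{-1/m_F}$, unpacked through $m_F=\max_{H\subseteq F}e_H/(v_H-1)$, translates into $n^{v_H-1}p^{e_H}\ge C^{e_H}$ for every $H\subseteq F$ with $e_H\ge 1$; hence each factor $n^{v_H}p^{e_H}$ is at least a constant multiple of $n$, giving $\Delta\le C_1(n^{v_F}p^{e_F})^2/n$ for some $C_1=C_1(C,F)$.

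To finish, applying the same inequality with $H=F$ shows $n^{v_F}p^{e_F}\ge C^{e_F}n$, so $\mu\ge\tau C^{e_F}n$ and also $\mu^2/\Delta\ge (\tau^2/C_1)\,n$. Consequently $\mu^2/(\mu+\Delta)\ge c\,n$ for a suitable $c=c(\tau,C,F)>0$, and Janson's inequality then yields the claimed $1-\exp(-C'n)$ bound. The only nontrivial point is the \emph{calibration} of the exponent: the choice $1/m_F$ in the hypothesis is exactly the threshold that forces $n^{v_H}p^{e_H}=\Omega(n)$ \emph{simultaneously} for every subgraph $H\subseteq F$, and this uniformity across $H$ is what drives both the lower bound on $\mu$ and the upper bound on $\Delta$. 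Any less restrictive choice of $p$ would leave some denominator $n^{v_H}p^{e_H}$ sublinear and break the argument.
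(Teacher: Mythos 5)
The paper does not actually prove this lemma; it quotes it from~\cite{ADRRS}, Prop.~5.8, so there is no in-paper argument to compare against. Your proof is the standard lower-tail Janson's inequality argument and is correct: the lower bound $\mu\ge\tau n^{v_F}p^{e_F}$, the subgraph decomposition of $\Delta$, and the key observation that $p\ge Cn^{-1/m_F}$ forces $n^{v_H}p^{e_H}\ge C^{e_H}n$ for \emph{every} $H\subseteq F$ with $e_H\ge1$ (since $e_H/(v_H-1)\le m_F$), which simultaneously gives $\mu=\Omega(n)$ and $\mu^2/\Delta=\Omega(n)$, are exactly what the cited reference does. The only cosmetic point is that one should note the implicit use of $|\mathcal F|\le O(n^{v_F})$ when bounding $\Delta$, but this is trivially true since $\mathcal F$ consists of copies of $F$ in $K_n$.
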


It is easy to see that for any graph $F$, the maximum in $m_F$ is always achieved by a~connected subgraph of $F$. Thus, as $(k+1)B_t$ is a disjoint union of copies of $B_t$, we have $m_{(k+1)B_t}=m_{B_t}$.
Hence, it all boils down to calculating the maximum density $m_{B_t}$ of the braid graph. To this end, it was shown in \cite[proof of Prop. 5.8]{ADRRS} and \cite[Prop. 4.3]{ADR} that
\[
m_{B_t}= \begin{cases}
    d_{K_\ell}=\frac{\ell}2\quad\mbox{if}\quad \ell\ge r(r+1),\\ d_{B_t} = \frac{t{\ell\choose 2}+(t-1){r+1\choose 2}}{t\ell-1}\quad\mbox{otherwise.}
\end{cases}
\]
This is the only place where the upper bound proofs in \cite{ADRRS} and \cite{ADR} differ. If there exists a~representation $m=k\ell+r$ with $\ell\ge r(r+1)$, the assumption $p\ge Cn^{-2/\ell}$ suffices to carry on the whole proof and we recover Theorem~\ref{21}(a).

Under the opposite assumption, $r\le\ell<r(r+1)$, the situation is less clear-cut. The  braids we use are of an enormous length $t=t(\eps)$ coming from the Regularity Lemma. So, in this case $m_{B_t}=d_{B_t}$  varies with $t$ (and is also a function of $\eps$). This is the reason for switching in \cite{ADR}  to the limiting density of $B(\ell,r,t)$ as $t\to\infty$. Recalling the definition of function $f_{k,m}$ from previous section, we have $d_{B_t}\nearrow f_{k,m}(\ell)$,
as $d_{B_t}=f_{k,m}(\ell)-\Theta(1/t)$ is a strictly increasing function of $t$. Thus, setting $\mu(\eps)=1/d_B(t)-1/f_{k,m}(\ell)$ we obtain Proposition~\ref{23k}. Then, the left-hand side inequality in Theorem~\ref{new_bounds} follows by Fact~\ref{AD} showing that \eqref{ellkm} holds for sufficiently large $m$.

\medskip

In summary, taking the ominous upper bound proof in \cite{ADRRS} as a black box, in order to get a feeling of the nature of the threshold associated with a particular case of $k$ and $m$, it suffices to check which decompositions $m=k\ell+r$, with $\ell\ge r$,
minimize the maximum density $m_{B_t}$ of the braid $B(\ell,r,t)$ and whether $\ell\ge r(r+1)$ or not. In the former case, we are after a threshold $d_{k,m}(n)$, in the latter -- after an over-threshold $\bar d_{k,m}(n)$. The main difficulty always lies in proving a matching 0-statement.

%********************** Ordinary thresholds are doomed to extinction? **********************

\subsection{Ordinary thresholds are doomed to extinction?}\label{extinct}

In this section, we argue that over-thresholds are by far more expected than the ordinary ones.  But first we have to learn how to recognize which statement yields a better bound, Theorem~\ref{21}(a) or Proposition~\ref{23k}. If it is the former, chances are that we are looking at an ordinary threshold $d_{k,m}(n)$, otherwise, at an over-threshold $\bar d_{k,m}(n)$. Both, provided a matching 0-statement can be proven. %We will provide evidence that the former scenario ceases to happen for sufficiently large $m$.
For convenience, we restate both statements in terms of Dirac exponents and over-exponents.

\begin{cor}\label{2cases} For all integers $k\geq 1$, $r\geq 0$, $\ell\ge r$, and $m=k\ell+r$,
\begin{enumerate}
    \item if $\ell\geq r(r+1)$, then $\eta_{k, m}\ge 2/\ell$; \label{cond:thresh}
    \item if $\ell<r(r+1)$, then $\bar\eta_{k,m}\ge 1/f(\ell)$. \label{cond:over_thresh}
\end{enumerate}
\end{cor}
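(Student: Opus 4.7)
The plan is to derive both parts directly from the upper-bound statements already proved earlier, by unpacking the definitions of the Dirac exponent and the Dirac over-exponent from Definitions \ref{thres} and \ref{over}. No new work is required: Corollary \ref{2cases} is purely a repackaging of Theorem \ref{21}(a) and Proposition \ref{23k} in the exponent language convenient for the subsequent discussion.

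For part (1), I would apply Theorem \ref{21}(a) with the given $r$ and $\ell$. Since $\ell \ge r(r+1)$ and $m = k\ell + r$ trivially satisfies the hypothesis $m \le k\ell + r$, that theorem yields $d_{k,m}(n) \le n^{-2/\ell}$. By the convention established after Definition \ref{thres} that $d_{k,m}(n) = n^{-\eta_{k,m}}$, this inequality translates immediately into $\eta_{k,m} \ge 2/\ell$. For part (2), I would simply invoke Proposition \ref{23k}: its hypothesis $r \le \ell < r(r+1)$ coincides with the present assumptions $\ell \ge r$ and $\ell < r(r+1)$, and its conclusion $\bar\eta_{k,m} \ge 1/f_{k,m}(\ell)$ is exactly the claimed bound once we abbreviate $f_{k,m}$ to $f$.

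The only conceptual content worth highlighting, and the reason the split occurs at the line $\ell = r(r+1)$, is the two-case formula for the braid density $m_{B_t}$ recalled in Section \ref{mach}. When $\ell \ge r(r+1)$ the density equals $d_{K_\ell} = \ell/2$, independently of $t$, so the Janson-based probabilistic step of the Absorbing Method produces an ordinary threshold $n^{-2/\ell}$. When $\ell < r(r+1)$ the density equals $d_{B_t}$, which depends on the braid length $t = t(\eps)$ coming from the Regularity Lemma and only increases up to $f_{k,m}(\ell)$ as $t \to \infty$; this is exactly what forces the weaker ``over'' form of the conclusion. Because the corollary is a mere restatement, there is no genuine obstacle in its proof; the real difficulty in the broader program lies in establishing matching $0$-statements, which is where the rest of the paper's new work is concentrated.
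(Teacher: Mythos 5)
Your proof is correct and matches the paper's own reasoning: the paper explicitly introduces this corollary with the phrase ``For convenience, we restate both statements in terms of Dirac exponents and over-exponents,'' treating it as a direct translation of Theorem~\ref{21}(a) and Proposition~\ref{23k} via the conventions in Definitions~\ref{thres} and~\ref{over}, which is precisely what you do.
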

\noindent Note that, by monotonicity, both parts of Corollary~\ref{2cases} hold for all $m\le k\ell+r$.

We now define three crucial parameters which will be helpful in telling the two cases apart.
Given $k\ge1$ and $m>k$, let
$$r_{cr}(k,m)=\max\{r\ge0:\ (m-r)/k \mbox{ is an \emph{integer} and } (m-r)/k\ge r(r+1)\}.$$

Note that for $k\ge2$, due to non-divisibility, the parameter $r_{cr}(k,m)$ may not exist.
Specifically, it is easy to see that for $m\equiv s\pmod{k}$, $1\le s\le k-1$, $r_{cr}(k,m)$  exists if and only if $m\ge ks(s+1)+s$. Indeed, if $m\equiv s \pmod{k}$, then for every representation $m=k\ell +r$, we have $\ell\le \frac{m-s}k$ and $r\ge s$. Thus, if $m<ks(s+1)+s$, then  $\ell<r(r+1)$, so $r_{cr}(k,m)$ does not exist, while
\begin{equation}\label{mrs}
\mbox{if}\quad m\ge ks(s+1)+s,\quad\mbox{then}\quad r_{cr}(k,m)\ge s,
\end{equation}
since $s$ satisfies both conditions in the definition of $r_{cr}(k,m)$.

In particular, for $k=2$, only $r_{cr}(2,3)$ does not exist, while for $k=3$ the exceptions are $m=4$ ($s=1$) and $m= 5,8,11,14,17$ ($s=2$). Since $s\le k-1$, it follows from~\eqref{mrs} that
\begin{equation}\label{rkm}
r_{cr}(k,m)\quad\mbox{ exists for all}\quad m\ge (k^2+1)(k-1),
 \end{equation}
 though it might do for smaller $m$ as well, for instance, for all $m\equiv 0 \pmod{k}$. See the second row of Tables~\ref{table:rcr1}--\ref{table:rcr3} for the values of $r_{cr}(k,m)$ with $k=1,2,3$ and  small $m$.
	
If $r_{cr}:=r_{rc}(k,m)$ does exist, it tells us what are the best bounds coming from Corollary~\ref{2cases}. Indeed,
putting
$$\ell_{cr}:=\ell_{cr}(k,m)=\frac{m-r_{cr}}k$$
and
$$\ell^*:=\ell^*_{k,m}=\argmin f(x)[\{r_{cr},\dots,\ell_{cr}-1\}],$$
we get
\begin{equation}
\eta_{k, m}\ge 2/\ell_{cr} \quad \text{and} \quad \bar\eta_{k,m}\ge\frac1{f(\ell^*)}
\end{equation}
from cases (\ref{cond:thresh}) and (\ref{cond:over_thresh}), respectively. Thus, it boils down to checking if

\begin{equation}\label{if}
f(\ell^*)\le\ell_{cr}/2.
\end{equation}
Note that~\eqref{ellkm} is equivalent to $\ell_m<\ell_{cr}$, which implies that $f(\ell_m)\le f(\ell_{cr})$. Moreover, one can show that $f(\ell_{cr})\le\ell_{cr}/2$. Thus,~\eqref{ellkm} implies~\eqref{if}. However, the inverse implication is not true in general. For $k=1$, each $m\in\{5,6,9\}$ is a counterexample -- see Table~\ref{table:rcr1} for the relevant parameters. The advantage of~\eqref{if}  is even more transparent for $k=2$: for $m\le34$, among the fourteen values of $m$ for which $~\eqref{if}$ holds, only  two, 25 and 32, satisfy~\eqref{ellkm} (details omitted).

Whichever holds, case (\ref{cond:thresh}) or (\ref{cond:over_thresh}) of Corollary~\ref{2cases}, it prompts a strong indication of the nature of the presumable threshold, ordinary or over-threshold. Except for a handful of small examples, in all known instances this prediction has not failed. Consult Tables~\ref{table:rcr1}--\ref{table:rcr3} for the values of all relevant parameters for $k=1,2,3$ and small $m$; the circled entries determine the Dirac exponents and over-exponents, which agree with the values presented in Tables~\ref{table:exponents}--\ref{table:exponents3}.

\begin{table}
    \renewcommand{\arraystretch}{1.3}
    \begin{tabular}{ | >{\columncolor{codelightgray}}c || c | c |c | c | c | c | c | c | c | }
	\hline
	\rowcolor{codelightgray}
			$m$ & 2\, & 3\, & 4\, & 5\, & 6\, & 7\, & 8\, & 9\, & 10 \\ \hline\hline
			$r_{cr}(1,m)$ & 0 & 1 & 1 & 1 & 1 & 1 & 2 & 2 & 2  \\ \hline\hline
			$\ell_{cr}(1,m)$ & \circled{2} & \circled{2} & \circled{3} & 4 & 5 & 6 & \circled{6} & 7 & 8 \\ \hline\hline
			$\ell_{1,m}$ & 2 & 2 & 3 & 4 & 5 & 5 & 6 & 7 & 7  \\ \hline\hline
			$\ell^*_{1,m}$ & 1 & 1 & 2 & 3 & 4 & 5 & 5 & 6 & 7 \\ \hline\hline
			$f(\ell_{1,m})$ & $\frac12$ & 1 & $\frac43$ & $\frac74$ & $\frac{11}5$ & \circled{$\frac{13}5$} & 3 & $\frac{24}7$ & \circled{$\frac{27}7$} \\ \hline\hline
			$f(\ell^*_{1,m})$ & 1 & 3 & 2 & \circled{2} & \circled{$\frac94$} & \circled{$\frac{13}5$} & $\frac{16}5$ & \circled{$\frac72$} & \circled{$\frac{27}7$} \\ \hline
    \end{tabular}
    \caption{Relevant parameters for $k=1$ and $2\le m\le10$. Circled numbers determine Dirac exponents $2/\ell_{cr}$ or over-exponents $1/f(\ell^*)$ and $1/f(\ell)$.}
    \label{table:rcr1}
\end{table}

\begin{table}
    \renewcommand{\arraystretch}{1.3}
    \begin{tabular}{ | >{\columncolor{codelightgray}}c || c | c |c | c | c | c | c | c | c | c | c | c | c |c|c|}
    \hline
    \rowcolor{codelightgray}
    $m$ & 7\, & 8\, & 9\, & 10\, & 11\, & 12\, & 13\, & 14\, & 15\, & 16\,&17\,&18\,&19\,&20 \\ \hline\hline
    $r_{cr}(2,m)$ & 1 & 0 & 1 & 0 & 1 & 0 & 1 & 2 & 1 & 2 & 1 & 2 & 1 & 2  \\
    \hline\hline
    $\ell_{cr}(2,m)$ & \circled{3} & 4 & \circled{4} & 5 & \circled{5} & 6 & \circled{6} & \circled{6} & 7 & \circled{7} & 8 & \circled{8} & 9 & \circled{9} \\
    \hline\hline
    $\ell_{2,m}$ & 3 & 4 & 4 & 5 & 5 & 6 & 6 & 6 & 7 & 7 & 8 & 8 & 9 & 9 \\
    \hline\hline
    $\ell^*_{2,m}$ & 2 & 3 & 3 & 4 & 4 & 5 & 5 & 5 & 6 & 6 & 7 & 7 & 8 & 8  \\
    \hline\hline
    $f(\ell_{2,m})$ & $\frac43$ & $\frac32$ & $\frac74$ & $2$ & $\frac{11}5$ & $\frac52$ & $\frac83$ & $3$ & $\frac{22}7$ & $\frac{24}7$ & $\frac{29}8$ & $\frac{31}8$ & $\frac{37}9$& $\frac{13}3$ \\
    \hline\hline
    $f(\ell^*_{2,m})$ & $\frac72$ & \circled{2} & 3 & \circled{$\frac94$} & 3 & \circled{$\frac{13}5$} & $\frac{16}5$ & 4 & \circled{$\frac72$} & $\frac{17}4$ & \circled{$\frac{27}7$} & $\frac{31}7$ & $\frac{17}4$ & $\frac{19}4$ \\ \hline
    \end{tabular}

\caption{Relevant parameters for $k=2$ and $m\in\{7,\dots,20\}$. Circled numbers determine Dirac exponents $2/\ell_{cr}$ or over-exponents $1/f(\ell^*)$. The only open case is $m=19$.}
\label{table:rcr2}
\end{table}

\begin{table}
    \renewcommand{\arraystretch}{1.3}
    \begin{tabular}{ | >{\columncolor{codelightgray}}c || c | c | c | c | c | c | c | c | c | c |c|c|}
    \hline
    \rowcolor{codelightgray}
    $m$ &  10\, & 11\, & 12\, & 13\, & 14\, & 15\, & 16\,&17\,&18\,&19\,&20 \\ \hline\hline
    $r_{cr}(3,m)$  & 1 & - & 0 & 1 & - & 0 & 1 & - & 0 & 1 & 2  \\
    \hline\hline
    $\ell_{cr}(3,m)$  & \circled{3} & - & \circled{4} & \circled{4} & - & \circled{5} & \circled{5} & - & \circled{6} & \circled{6} & \circled{6} \\
    \hline\hline
    $\ell_{3,m}$ & 3 & 3 & 4 & 4 & 4 & 5 & 5 & 5 & 6 & 6 & 6 \\
    \hline\hline
    $\ell^*_{3,m}$  & 2 & - & 3 & 3 & - & 4 & 4 & - & 5 & 5 & 5  \\
    \hline\hline
    $f(\ell_{3,m})$ & $\frac43$ & \circled{$2$} & $\frac32$ & $\frac74$ & \circled{$\frac94$} & $2$ & $\frac{11}5$ & \circled{$\frac{13}5$} & $\frac{5}2$ & $\frac{8}3$& $3$ \\
    \hline\hline
    $f(\ell^*_{3,m})$ & $\frac{11}2$ & - & $3$ & $\frac{13}3$ & - & 3 & 4 & - & $\frac{16}5$ & $4$ & $5$  \\ \hline
    \end{tabular}

\caption{Relevant parameters for $k=3$ and $m\in\{10,\dots,20\}$. Circled numbers determine Dirac exponents $2/\ell_{cr}$ or over-exponents $1/f(\ell)$.}
\label{table:rcr3}
\end{table}

\medskip

We have already noticed (cf.~\eqref{rkm}) that for each $k$, if $m$ is large enough, then $r_{cr}(k,m)$ exists. In fact, the same is true with respect to the validity of inequality~\eqref{if}, which implies that ordinary thresholds are likely to vanish for large $m$.
%	
%\begin{prop}\label{rcr}
%For every $k\ge1$ and $m>k$, if $r_{cr}\ge 5k/2$, then $ f(\ell_{cr}-1)\le \ell_{cr}/2$. Consequently,  if $m\ge(7k/2)^3$ then~\eqref{if} holds.
%\end{prop}
%
\begin{restatable}[]{prop}{rcrr}
\label{rcr}
For every $k\ge1$ and $m>k$, if $r_{cr}\ge 5k/2$, then $ f(\ell_{cr}-1)\le \ell_{cr}/2$. Consequently,  if $m\ge(7k/2)^3$, then~\eqref{if} holds. Moreover, for $k=2$,~\eqref{if} holds for all odd $m\ge 15$, except $m=27,29,31,33$, and for all even $m\ge 24$, except $m=44,46$.
\end{restatable}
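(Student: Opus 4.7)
The plan for the first assertion is to note that $\ell^*$ minimizes $f=f_{k,m}$ over a set containing $\ell_{cr}-1$, so it suffices to prove $f(\ell_{cr}-1)\le \ell_{cr}/2$. From $m=k\ell_{cr}+r_{cr}$ one has $m-k(\ell_{cr}-1)=r_{cr}+k$, so
\[
f(\ell_{cr}-1)=\frac{1}{\ell_{cr}-1}\left[\binom{\ell_{cr}-1}{2}+\binom{r_{cr}+k+1}{2}\right],
\]
and clearing denominators will reduce the target to
\[
(r_{cr}+k)(r_{cr}+k+1)\le 2(\ell_{cr}-1).
\]
The defining property $\ell_{cr}\ge r_{cr}(r_{cr}+1)$ will then reduce this further to the quadratic bound $r_{cr}^2-(2k-1)r_{cr}\ge k^2+k+2$, whose positive root is $\tfrac12\bigl((2k-1)+\sqrt{8k^2+9}\bigr)$. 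A direct check, using $\sqrt{8k^2+9}<3k+1$ for $k\ge 2$ together with the integrality $r_{cr}\ge 3$ for $k=1$, shows that this root never exceeds $5k/2$; then~\eqref{if} follows because $f(\ell^*)\le f(\ell_{cr}-1)$.

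\textbf{Existence of $r_{cr}\ge 5k/2$ for large $m$.} Writing $s=m\bmod k$, the admissible $r$'s (those with $(m-r)/k\in\ZZ$) form the arithmetic progression $s,s+k,s+2k,\dots$; I would let $r_0$ be the smallest member with $r_0\ge 5k/2$, so that $r_0<5k/2+k=7k/2$. It then suffices to verify that $r_0$ is itself admissible in the maximization defining $r_{cr}$, i.e.~that $m\ge r_0+kr_0(r_0+1)$. Plugging $r_0<7k/2$, the right-hand side is at most $(7k/2)\bigl(1+k(7k/2+1)\bigr)=49k^3/4+7k^2/2+7k/2$, which is strictly smaller than $(7k/2)^3=343k^3/8$ for every $k\ge 1$ by a routine polynomial comparison. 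Hence $r_{cr}\ge r_0\ge 5k/2$, and~\eqref{if} then follows from the previous step.

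\textbf{The $k=2$ addendum.} The two steps above handle all $m$ with $r_{cr}(2,m)\ge 5$, which by parity of the admissible values of $r$ corresponds to odd $m\ge 65$ and even $m\ge 90$. In the complementary finite ranges, $r_{cr}(2,m)\in\{1,3\}$ for odd $m$ and $r_{cr}(2,m)\in\{2,4\}$ for even $m$, with the breakpoints dictated by the inequality $(m-r)/2\ge r(r+1)$. On each constant-$r_{cr}$ interval $\ell_{cr}$ is an explicit linear function of $m$, and by convexity of $f$ the minimizer $\ell^*\in\{r_{cr},\dots,\ell_{cr}-1\}$ is one of the at most two integers closest to $\lambda_{k,m}$. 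A direct tabulation of $f(\ell^*)$ against $\ell_{cr}/2$ across the fewer than forty relevant values of $m$ then isolates exactly the failures $m\in\{27,29,31,33\}$ and $m\in\{44,46\}$ asserted. The hard part, such as it is, is purely this last bookkeeping step: the conceptual content is fully absorbed by the quadratic reduction in the first paragraph.
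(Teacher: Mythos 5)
Your reduction to the quadratic $r_{cr}^2-(2k-1)r_{cr}\ge k^2+k+2$, via $m-k(\ell_{cr}-1)=r_{cr}+k$ and the defining bound $\ell_{cr}\ge r_{cr}(r_{cr}+1)$, is exactly the paper's argument, and your derivation of $r_{cr}\ge 5k/2$ from $m\ge(7k/2)^3$ (bounding the smallest admissible $r_0\ge 5k/2$ from above by $7k/2$ and checking directly that it satisfies the defining inequality) is a sound variant of the paper's reverse estimate $m\le(r_{cr}+k)^3$. The only substantive divergence is the $k=2$ tail. You propose a brute-force tabulation of $f(\ell^*)$ versus $\ell_{cr}/2$ over all roughly sixty values of $m$ in the complementary ranges, whereas the paper exploits a monotonicity that you leave on the table: for fixed $r_{cr}$, the sufficient inequality $(r_{cr}+k+1)(r_{cr}+k)\le 2(\ell_{cr}-1)$ only involves $\ell_{cr}$ on its favorable side, and $\ell_{cr}$ increases with $m$ along each constant-$r_{cr}$ interval, so it suffices to locate the smallest $m$ in each interval for which it holds (these are $m=15,24,35,48$ for $r_{cr}=1,2,3,4$ respectively), reducing the bookkeeping to a handful of checks. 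Two small corrections to your write-up: the positive root $\tfrac12\bigl((2k-1)+\sqrt{8k^2+9}\bigr)$ does exceed $5k/2$ at $k=1$ (it is $\approx 2.56$), so the phrase ``never exceeds $5k/2$'' should be ``is less than $5k/2$ for $k\ge 2$, and below $3$ at $k=1$''; and the claim that $\ell^*$ is always among the two integers nearest $\lambda_{k,m}$ fails precisely in the exceptional cases, where $\lambda_{k,m}\ge\ell_{cr}-1$ and $\ell^*$ is the right endpoint of $\{r_{cr},\dots,\ell_{cr}-1\}$ — harmless for a tabulation, but worth stating correctly.
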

\noindent This proposition will be proved in Appendix.	

For $k=2$, Proposition~\ref{rcr}, together with Proposition~\ref{k2} (cf. Table~\ref{table:rcr2}), implies that the only remaining values of $m$ for which~\eqref{if} does not hold, and so, ordinary threshold are likely,  though not yet proved, are $m\in\{22,27,29,31,33,44,46\}.$
It is possible to produce such (finite) lists of $m$'s for higher $k$, but they get longer and longer. For instance, for $k=3$,~\eqref{if} holds for all $m\ge 142$, as well as for certain intervals of smaller $m$, e.g., for all $m\in\{21,\dots,36\}\cup\{69,\dots,129\}$  which are divisible by 3.

\medskip

If the parameter $r_{cr}(k,m)$ does not exist, then it could be either way, a threshold or over-threshold.
Let us look at some examples.
	For $k+1\le m\le2k-1$, $k\ge2$,  by Theorem~\ref{21}(c) with $\ell=2$ and $r=0$, we get $\eta_{k,m}(n)=1$, the same Dirac exponent as for $m=2k$ and $m=2k+1$. Similarly, for $k=7$, $m=20$,  Theorem~\ref{21}(c) with $\ell=3$ and $r=0$ yields $\eta_{7,20}=3/2$.
	In turn, for $k=3$, $m=17$,  by Theorem~\ref{21}(a) with $\ell=6$ and $r=0$, we get $\eta_{3,17}\ge 1/3$. However, Proposition~\ref{23k}, with $\ell=5$ and $r=2$, yields a better bound $\bar\eta_{3,17}\ge 5/13$. Note that $f_{3,17}(5)=13/5$, which turns out to be the correct over-exponent (cf. Proposition~\ref{k2}).

%********************** General 0-statement **********************
	
\section{General 0-statement}\label{lb}

In this section, we prove the right-hand side inequality in Theorem~\ref{new_bounds}. To better understand this proof, we recall the proofs of Theorems~\ref{21}(b) and~\ref{23}(b) from, respectively, \cite{ADRRS} and \cite{ADR}. In doing so, we will be relying on some elementary facts about the random graph $G(n,p)$.

 Part (c) below was stated without proof in \cite[Fact 2.1]{ADRRS} with reference to \cite[Remark 3.7]{JLR}. For completeness, we provide a short proof here. A graph with $L$ vertices and $M$ edges will be called an \emph{$(L,M)$-graph}. For a graph $F$, let $X_F$ count the number of copies of $F$ in $G(n,p)$. In particular, set $X_s:=X_{K_s}$.

\begin{fact}\label{Markov}~
\begin{enumerate}
\item[(a)] Let $L$ and $M$ be two positive integers. If $p=o(n^{-L/M})$, then a.a.s.~$G(n,p)$ contains no $(L,M)$-graph as a subgraph.
\item[(b)] Let $s\ge3$. If $p=o(n^{-2/s})$, then a.a.s.~$G(n,p)$ contains $o(n)$ copies of $K_s$.
\item[(c)] Let $s\ge3$. For every $\eps>0$ there exists $c>0$ such that if $p\le cn^{-2/s}$, then a.a.s.~$G(n,p)$ contains at most $\eps n$ copies of $K_s$.
\end{enumerate}
\end{fact}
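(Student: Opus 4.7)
I would prove all three parts by moment methods. Parts (a) and (b) are one-line applications of Markov: for (a), the number of labeled $(L,M)$-subgraphs of $K_n$ is at most $\binom{n}{L}\binom{\binom{L}{2}}{M} = O(n^L)$, each appearing in $G(n,p)$ with probability $p^M$, so the expected number of $(L,M)$-subgraphs of $G(n,p)$ is $O(n^L p^M) = o(1)$ whenever $p = o(n^{-L/M})$. For (b), since $\E[X_s] \le n^s p^{\binom{s}{2}}/s!$ and $2\binom{s}{2}/s = s - 1$, the hypothesis $p = o(n^{-2/s})$ gives $\E[X_s] = o(n)$, so for any fixed $\delta > 0$, $\Prob(X_s \ge \delta n) \le \E[X_s]/(\delta n) \to 0$.

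Part (c) is more delicate: at $p = \Theta(n^{-2/s})$ we have $\E[X_s] = \Theta(n)$, so Markov on $X_s$ yields only a constant upper bound on $\Prob(X_s \ge \eps n)$. Instead, I would use Chebyshev's inequality. First, pick $c$ small enough that $\E[X_s] \le (c^{\binom{s}{2}}/s!)\,n \le \eps n / 2$. Second, I would establish $\Var(X_s) = O(n)$ by the standard second-moment decomposition: writing $X_s = \sum_S I_S$ over $s$-subsets $S \subseteq [n]$, the diagonal part is bounded by $\E[X_s] = O(n)$, while the off-diagonal contributions, grouped by intersection size $i \in \{2, \ldots, s-1\}$, satisfy
\[
\binom{n}{s}\binom{s}{i}\binom{n-s}{s-i}\,p^{2\binom{s}{2} - \binom{i}{2}} = O\!\left(n^{2-i + i(i-1)/s}\right)
\]
at $p = \Theta(n^{-2/s})$. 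The exponent $g(i) := 2 - i + i(i-1)/s$ is convex with $g(2) = g(s-1) = 2/s$, so each such term is $O(n^{2/s}) = o(n)$. Hence $\Var(X_s) = O(n)$, and Chebyshev gives
\[
\Prob(X_s > \eps n) \le \Prob\!\left(|X_s - \E[X_s]| > \eps n/2\right) \le \frac{4\,\Var(X_s)}{\eps^2 n^2} = O(1/n) \to 0.
\]

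The only real obstacle is the variance estimate in (c), which reduces to the coincidence $g(2) = g(s-1)$ together with the convexity of $g$; this is a routine clique-counting computation, so no part requires substantial work beyond careful bookkeeping of exponents.
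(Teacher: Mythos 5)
Your proposal is correct and follows essentially the same route as the paper: Markov's inequality on the first moment for parts (a) and (b), and Chebyshev's inequality with the standard second-moment decomposition over intersection sizes for part (c). The only cosmetic difference is in part (c), where you verify $\Var(X_s)=O(n)$ by direct exponent bookkeeping using the convexity of $g(i)=2-i+i(i-1)/s$, whereas the paper phrases the same bound in terms of the ratios $(\E X_s)^2/\E X_t$; the underlying computation is identical.
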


\begin{proof}
(a) For every graph $F$ with $L$ vertices and $M$ edges, by Markov's inequality,
\[\Prob(X_F>0)\le \E X_F=O(n^Lp^M)=o(1).\]

(b) Set $\omega:=1/(pn^{2/s})$. By the assumption on $p$, we have $\omega\to\infty$ and, by Markov's inequality,
$$\Prob(X_s\ge n/\omega)\le \omega\E X_s/n=O\left(\omega^{1-\binom s2}\right)=o(1).$$

(c) By monotonicity, we assume that $p = cn^{-2/s}$, and let $c=(\eps/2)^{1/\binom s2}$. We have $\E X_s\sim c^{\binom s2}n=(\eps/2)n$, while summing over all pairs of edge-intersecting copies of $K_s$ in $K_n$, and noticing that $\E X_s=o(\E X_t)$ for all $2\le t\le s-1$, we get
\begin{align*}
    Var X_s & =O\left(\sum_{t=2}^sn^{2s-t}p^{2\binom s2-\binom t2}\right) = O\left(\sum_{t=2}^s\frac{(\E X_s)^2}{\E X_t}\right)  =O\left(\E X_s\right) =O(n).
\end{align*}
Thus, by Chebyshev's inequality,
$$\Prob\left(|X_s-\E X_s|\ge n^{2/3}\right)\le\frac{Var X_s}{n^{4/3}}=o(1).$$
\end{proof}

Next, we describe a generic construction behind all 0-statements concerning powers of Hamiltonian cycles.
\smallskip

{\bf Construction.} For integers $k\ge1$ and $n\ge k+1$, $n$ divisible by $k+1$, and for any $\eps<1/(k+1)$,
let $G^{(k)}_\varepsilon=:G_\varepsilon$ be an $n$-vertex graph obtained from the complete, balanced $(k+1)$-partite graph on $n$ vertices with partition $U_1\cup\dots\cup U_{k+1}$ as follows. For each $i=1,\dots,k+1$  fix a subset $W_i\subset U_i$ of size $|W_i|=\lceil\eps n\rceil$ and insert
an unbalanced bipartite complete graph between $W_i$ and $U_i\setminus W_i$.  Clearly, $\delta(G_\varepsilon) \geq \left(\frac{k}{k+1}+\varepsilon\right)n$. For future references, set $W=W_1\cup \cdots\cup W_{k+1}$.

\smallskip

Let us first see how this construction yields the 0-statement in Theorem~\ref{21}(b).

\begin{proof}[Proof of Theorem~\ref{21}(b)] Fix $\eps=1/(2m(k+3))$ and let $c_0=c_0(\ell,\epsilon)$ be the constant from Fact~\ref{Markov}(c).
Let $p\le c_0n^{-2/\ell}$ and suppose that $G_\eps\cup G(n,p)$ contains a Hamiltonian $m$-cycle~$C$. Since $m\ge(k+1)(\ell-1)$, by the pigeonhole principle, every clique $K_{m+1}$ in $C$, disjoint from $W$, must induce a clique $K_\ell$ in $G(n,p)$. There are at least
$$\left\lfloor\frac n{m+1}\right\rfloor-|W|\ge\frac n{2m}-(k+1)\eps n=2\eps n$$
 such cliques. Hence,
$$\Prob(G_\eps\cup G(n,p)\in\ham^m_n)\le\Prob(G(n,p)\;\mbox{contains at least $2\eps n$ cliques $K_\ell$})$$
and, by Fact~\ref{Markov}(c), the latter probability is $o(1)$. Thus, $\eta=2/\ell$ satisfies Definition~\ref{thres}(ii).
%the number of copies of $K_\ell$ in $G(n,p)$ is a.a.s.~fewer than that -- a~contradiction.
\end{proof}

To prove Theorem~\ref{23}(b) we need to state a crucial lemma \cite[Lemma 2.2]{ADR}. Recall the definition of $\ell_m$ from Section~\ref{known}.

\begin{lemma}\label{lem:pathedges}
 Let $m\geq 2$ and let $P$ be an $m$-path with $V(P)=A\cup B$, $A\cap B=\emptyset$. Then,
\[|E(P[A])| + |E(P[B])| \geq f(\ell_m)|V(P)| -2m^2.\]
\end{lemma}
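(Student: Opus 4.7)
The plan is to use the monochromatic run-length structure of the coloring $V(P) = A \cup B$ and to exploit the convexity of $f = f_m$ together with the fact that $\ell_m$ is its integer minimizer. Write $V(P) = R_1 \cup R_2 \cup \ldots \cup R_s$ for the ordered partition of $V(P)$ into maximal monochromatic runs with $r_k := |R_k|$, so that $\sum_{k=1}^s r_k = |V(P)|$.

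I first treat the \emph{generic} case, in which every $r_k \leq m+1$ and the ``full reach'' condition $\min(r_{k-1}, r_{k+1}) \geq m - r_k$ holds for each interior $k \in [2, s-1]$, so that cross-edges between same-color runs occur only across a single gap. A direct count shows that the number of $P$-edges between the two same-color runs $R_{k-1}$ and $R_{k+1}$ (separated by $R_k$) equals $\binom{m - r_k + 1}{2}$, with the convention $\binom{x}{2} = 0$ for $x \leq 1$, while the within-run edges of $R_k$ number $\binom{r_k}{2}$. Attributing each cross-edge to its intermediate ``gap'' run and using the identity $\binom{r}{2} + \binom{m - r + 1}{2} = r f(r)$, we get
\[
|E(P[A])| + |E(P[B])| = \sum_{k=1}^s \binom{r_k}{2} + \sum_{k=2}^{s-1} \binom{m - r_k + 1}{2} = \sum_{k=1}^s r_k f(r_k) - \binom{m - r_1 + 1}{2} - \binom{m - r_s + 1}{2}.
\]
The two boundary terms total at most $2\binom{m}{2} \leq m^2$. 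By convexity of $f$ and the definition of $\ell_m$, $f(r_k) \geq f(\ell_m)$ for each positive integer $r_k$, so $\sum_k r_k f(r_k) \geq f(\ell_m)|V(P)|$, yielding the desired bound in the generic case.

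For the non-generic cases, when $r_k > m+1$ the within-edges equal $m r_k - \binom{m+1}{2}$ instead of $\binom{r_k}{2}$, but since $f(\ell_m) \leq f(m) = (m-1)/2 < m/2$, a short computation gives $m r_k - \binom{m+1}{2} \geq r_k f(\ell_m)$. When the ``full reach'' condition fails, the single-gap cross-edge formula overestimates; however, missing cross-edges between $R_{k-1}$ and $R_{k+1}$ are replaced by nonzero cross-edges between $R_{k-1}$ and further same-color runs $R_{k+1+2j}$ for $j \geq 1$, and in the extreme case of strict alternation $ABAB\ldots$ a direct count yields $|E(P[A])| + |E(P[B])| \geq \lfloor m/2 \rfloor \cdot |V(P)| - O(m^2) \geq f(\ell_m)|V(P)| - O(m^2)$. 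The main obstacle is the combinatorial bookkeeping in this non-generic regime, where the clean identity $M = \sum_k r_k f(r_k) - O(m^2)$ from the generic case breaks down; one must verify via either a direct per-vertex mono-degree count, or a local rearrangement argument that monotonically merges very short runs into their longer neighbors without losing too many monochromatic edges, that the total additive error remains within the stated $2m^2$ slack.
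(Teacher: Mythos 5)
The paper does not prove Lemma~\ref{lem:pathedges} internally -- it is cited from \cite{ADR}, with the only new observation being that the original $K_{m+1}$-free hypothesis there is redundant. So there is no ``paper's proof'' to match word-for-word; the closest piece of machinery the paper does develop is algorithm REWIRE and Lemma~\ref{perform}, used to prove the $k\ge 2$ analogue (Lemma~\ref{weak}). Your proposal takes a genuinely different route. REWIRE modifies the path ordering so that the run/gap lengths satisfy linear constraints (i)--(iv), and then invokes Jensen's inequality; this only yields $f(\lambda_{k,m})$, since Jensen cannot see integrality. Your approach keeps the path fixed and works directly with the monochromatic run-length vector $(r_1,\dots,r_s)$, exploiting that each $r_k$ is an integer so that $f(r_k)\ge f(\ell_m)$ termwise. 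If it worked in full generality, this would reproduce the stronger integer-minimizer bound $f(\ell_m)$, which is precisely what the paper's own Lemma~\ref{weak}/Conjecture~\ref{con:pathedges} discussion identifies as the hard part for $k\ge 2$.

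The generic case is correct: with all $r_k\le m+1$ and the full-reach condition $\min(r_{k-1},r_{k+1})\ge m-r_k$, your edge count and the identity $\binom{r}{2}+\binom{m-r+1}{2}=r\,f(r)$ give exactly $\sum_k r_k f(r_k)$ up to a boundary defect $\le 2\binom m2\le m^2$, and $f(r_k)\ge f(\ell_m)$ for all integers $r_k\in\{1,\dots,m+1\}$ closes it. You also correctly patch the case $r_k>m+1$ using $mr_k-\binom{m+1}{2}\ge r_k f(\ell_m)$.

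The genuine gap is the full-reach failure. You state that missing cross-edges across gap $R_k$ are ``replaced'' by cross-edges to farther runs, but you do not prove this and it is not clear a priori why the replacement should preserve enough edges. The single example you verify (strict alternation) is the easiest one: there $P[A]$ and $P[B]$ are $\lfloor m/2\rfloor$-powers of paths with $\lfloor m/2\rfloor > f(\ell_m)$, and the bound holds with huge slack. What is needed is a uniform argument for mixed configurations with some short runs violating full reach and some long ones satisfying it; the ``merge short runs into neighbors'' idea you mention would change the coloring and hence alter $|E(P[A])|+|E(P[B])|$ in a way you do not control. (The paper's REWIRE reorders the path rather than recoloring, precisely to keep that quantity non-increasing; simply adapting it is known not to recover the integer minimizer without the extra constraint $y_i\le kx_i$, which the paper leaves as an open problem.) You explicitly flag this as ``the main obstacle'' and ``must verify,'' so the attempt is a well-motivated sketch with the decisive step outstanding, not a complete proof.
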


\begin{rem} The original assumption in \cite[Lemma 2.2]{ADR} forbidding copies of $K_{m+1}$ inside $P[A]\cup P[B]$ was redundant. \end{rem}

\begin{proof}[Proof of Theorem~\ref{23}(b)] Fix $\mu>0$ and choose $\eps$ to be small enough compared to $\mu$.
For $k=1$, the graph $G_\eps$ consists of just two sets, $U_1$ and $U_2$. Let $p=n^{-1/f(\ell_m)-\mu}$. Suppose that $G_\eps\cup G(n,p)$ contains a Hamiltonian $m$-cycle $C$ and remove all vertices of $W$. What remains of $C$ is a collection of at most $2\eps n$ $m$-paths, among which there must be one of length at least $(n-2\eps n)/(2\eps n)>1/(3\eps)$. Let $P$ be a sub-$m$-path of this $m$-path of length exactly $L:=\lceil 1/(3\eps)\rceil$. Then, by Lemma~\ref{lem:pathedges} with $A=V(P)\cap U_1$ and $B=V(P)\cap U_2$, we conclude that a spanning subgraph of $P$ with $M:=f(\ell_m)L -2m^2$ edges is contained in $G(n,p)$. Hence,
$$\Prob(G_\eps\cup G(n,p)\in\ham^m_n)\le\Prob(G(n,p)\;\mbox{contains an $(L,M)$-subgraph}).$$
However, setting $f:=f(\ell_m)$, one can easily check that
$$\frac1f+\mu>\frac{L}{fL-2m^2},$$
equivalently, $L>2m^2/f+2\mu m^2$, for $\eps$ sufficiently small. Consequently, $p=o(n^{-L/M})$ and, by Fact~\ref{Markov}(a), $\Prob(G(n,p)\;\mbox{contains an $(L,M)$-subgraph})=o(1)$.
%$$O(n^Lp^{M})= O\left(n^{L-(1/f+\mu)(fL-2m^2)}\right)= O\left(n^{2m^2/f+2m^2\mu-\mu f/(3\eps)}\right)=o(1)$$
Thus, $\eta=1/f(\ell_m)$ satisfies Definition~\ref{over}(ii).
\end{proof}

For the proof of the right-hand side inequality in Theorem~\ref{new_bounds} all we need is an analog of Lemma~\ref{lem:pathedges} with $|E(P[A])| + |E(P[B])|$ replaced by $\sum_{i=1}^{k+1}|E(P[V_{j}])|$, where $V(P)=V_1\cup\dots\cup V_{k+1}$, and
 with $f(\ell_{k,m})$ replaced by $f(\lambda_{k,m})$. A slightly weaker lower bound has been already established by an analytical method  in \cite[Section 4]{AR}. Indeed, the function $f(k,m,L)$ defined therein, with $L:=|V(P)|$ (in \cite{AR} the authors use $n$ for $L$), equals $\sum_{i=1}^{k+1}|E(P[V_{j}])|$, and it was proved in \cite[Theorem 4.1]{AR} that
 $$f(k,m,L)\ge\left(m\sqrt{k^2+1}-mk-\frac12\right)L+O(m).$$ 
 Note that the coefficient at $L$ is, for large $m$, just a bit smaller than, but, in fact, asymptotically (as $m\to\infty$) equal to $f(\lambda_{k,m})$ (cf.~\eqref{lambdakm}).

We will prove the desired lower bound by purely combinatorial means. However, instead of bounding the quantity $\sum_{i=1}^{k+1}|E(P[V_{j}])|$,
we are going to bound the number of edges spanned by a \emph{single}, suitably selected,  subset of $V(P)$. It turns out that this will be sufficient to deduce the right-hand side inequality in Theorem~\ref{new_bounds}.

\begin{lemma}\label{weak}
For $k\ge 2$ and $m\geq k+1$ if $P$ is an $m$-path and $V_0\subset V(P)$ satisfies
$|V_0|\ge|V(P)|/(k+1)$, then
\[
    |E(P[V_{0}])|\geq f(\lambda_{k,m})|V_{0}| -  m(m+1).
\]
\end{lemma}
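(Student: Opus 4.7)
The plan is to prove Lemma~\ref{weak} by reducing it to a one-dimensional extremal problem and bounding the resulting close-pair count via averaging and convexity.

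First enumerate $V_0=\{w_1,\dots,w_{L_0}\}$ along $P$ with positions $i_1<\dots<i_{L_0}$, and set $L:=|V(P)|$, $L_0:=|V_0|$, so the hypothesis becomes $L\le(k+1)L_0$. With gaps $\delta_j:=i_{j+1}-i_j\ge 1$ satisfying $\sum_j\delta_j\le(k+1)L_0-1$, the edge count rewrites as
\[
|E(P[V_0])|=\sum_{d=1}^{L_0-1}Y_d, \qquad Y_d:=\#\{\,j\le L_0-d:\ i_{j+d}-i_j\le m\,\}.
\]
If $L_0\le m+1$, then the explicit formula~\eqref{lambdakm} yields $f(\lambda_{k,m})\le m$, so $f(\lambda_{k,m})L_0-m(m+1)\le 0$ and the bound is automatic; henceforth assume $L_0\ge m+2$.

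The key per-$d$ estimate comes from telescoping $\sum_{j=1}^{L_0-d}(i_{j+d}-i_j-d)\le dkL_0$, so Markov's inequality applied to the nonnegative quantity $i_{j+d}-i_j-d$ yields $Y_d\ge (L_0-d)-dkL_0/(m-d)$ for $1\le d<m$. Summing this alone over $d$ yields a bound of the form $cL_0 m+O(m^2)$ with a constant $c$ strictly smaller than the target $f(\lambda_{k,m})/m\sim 1/(\sqrt{k^2+1}+k)$. To close the gap, I would combine the Markov bound with the convexity identity
\[
\sum_{i=1}^{L-m}\binom{n(i)}{2}=\sum_{e\in E(P[V_0])}(m+1-d_e), \qquad n(i):=|V_0\cap[i,i+m]|,
\]
and the first-moment estimate $\sum_i n(i)=(m+1)L_0-O(m^2)$. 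Optimizing the trade-off over $d$ and the window size should recover the block-pattern bound $f_{k,m}(\ell)L_0$, which is achieved by the configuration consisting of blocks of size $\ell\in\{\lfloor\lambda_{k,m}\rfloor,\lceil\lambda_{k,m}\rceil\}$ separated by gaps of $k\ell$ (the braid-graph pattern of Section~\ref{bra}). Since $f_{k,m}(\ell)\ge f(\lambda_{k,m})$ by real-valued minimality of $f$ at $\lambda_{k,m}$, the lemma then follows up to an $m(m+1)$ boundary correction.

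The main obstacle is rigorously establishing the extremality of the block pattern. A natural approach is a rearrangement/smoothing argument on $(\delta_j)$: show that any local perturbation away from the block pattern cannot decrease $|E(P[V_0])|$. Alternatively, one could set up the LP dual of $\min\sum_d Y_d$ subject to $\sum_j\delta_j\le(k+1)L_0-1$ and exhibit an explicit dual certificate matching $f(\lambda_{k,m})$. Either way, a careful case analysis will be required to control the boundary contributions (accounting for the $m(m+1)$ error term) and to handle the non-integral optimizer $\lambda_{k,m}$, since the extremal discrete configurations use the integer $\ell_{k,m}$ rather than $\lambda_{k,m}$ itself.
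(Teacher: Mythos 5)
Your plan correctly identifies the shape of the problem and even correctly diagnoses that the straightforward Markov/averaging bound on the $Y_d$ falls short of the target $f(\lambda_{k,m})/m$ (your computation that the summed Markov estimate gives roughly $\tfrac{k+2}{2(k+1)^2}$ per unit, versus the needed $\tfrac{1}{\sqrt{k^2+1}+k}$, is right). But that gap is exactly the entire content of the lemma, and your proposal leaves it unresolved: the passage from ``combine the Markov bound with the convexity identity, optimize the trade-off'' to the stated conclusion is not an argument, and you explicitly flag ``rigorously establishing the extremality of the block pattern'' as an open obstacle, offering only ``one could try rearrangement or LP duality.'' Neither is carried out, so as written the proposal is a correct identification of the target together with an admission that the proof is missing.

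For comparison, the paper closes precisely this gap by implementing the ``rearrangement/smoothing argument'' you gesture at. It defines the algorithm REWIRE, which repeatedly performs local shift operations on the order of $V(P)$ (each shown by a short case check never to increase $|E(P[V_0])|$), terminating with a $V_0$-based partition into segments $S_i\subset V_0$ of size $x_i\in[1,m]$ and gaps $T_i$ of size $y_i\in[0,m]$ satisfying $x_i+y_i\ge m$ and $y_i+x_{i+1}\ge m$. Under these constraints each $S_i$ is a clique on $\binom{x_i}{2}$ edges and each consecutive pair contributes a bridge on exactly $\binom{m+1-y_i}{2}$ edges; a double application of Jensen's inequality to $\binom{z}{2}$ (once with $z_i=x_i$, once with $z_i=m+1-y_i$), combined with $\sum y_i \le kL_0$, gives
\[
|E(P[V_0])| \ge q'\binom{L_0/q'}{2}+q'\binom{m+1-kL_0/q'}{2}-m(m+1) = f(L_0/q')L_0 - m(m+1) \ge f(\lambda)L_0-m(m+1).
\]
Note that this route sidesteps the difficulty you anticipate about the discrete optimizer $\ell_{k,m}$ versus the real $\lambda_{k,m}$: because REWIRE does not force the $x_i$ to be equal (only to lie in $[1,m]$), Jensen's inequality naturally produces $f$ evaluated at the real average $L_0/q'$, which can then be bounded below by the global real minimum $f(\lambda)$, rather than by the discrete minimum $f(\ell_{k,m})$. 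In other words, the paper proves the (slightly weaker, real-optimum) bound directly, whereas the sharper integer bound $f(\ell_{k,m})L_0 - c(m)$ is left as Conjecture~\ref{con:pathedges}. Your plan, which explicitly aims for the block pattern with $\ell\in\{\lfloor\lambda\rfloor,\lceil\lambda\rceil\}$, is in effect aiming at the open conjecture rather than at the lemma as stated, and would require control you have not supplied.

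Your base-case observation for $L_0\le m+1$ (that $f(\lambda_{k,m})\le m$ makes the right-hand side nonpositive) is a correct and harmless normalization, but it does not touch the core difficulty.
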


 It was already shown in Section~\ref{bra} how the left-hand side inequality in Theorem~\ref{new_bounds} follows from Proposition~\ref{23k}.
We conclude this section by completing the proof of Theorem~\ref{new_bounds}, assuming Lemma~\ref{weak} holds. It relies on Lemma~\ref{weak} in a similar way Theorem~\ref{23}(b) does on Lemma~\ref{lem:pathedges}.  The proof of Lemma~\ref{weak} is deferred to the next section.

\begin{proof}[Proof of Theorem~\ref{new_bounds}, the right-hand side inequality]
%The right-hand side inequality in Theorem~\ref{new_bounds} follows from Lemma~\ref{weak} in a similar way Theorem~\ref{23}(b) follows from Lemma~\ref{lem:pathedges}.
Fix $\mu>0$ and choose $\eps$ to be small enough compared to $\mu$.
 Let $p=n^{-1/f(\lambda_{k,m})-\mu}$. Suppose that $G_\eps^{(k)}\cup G(n,p)$ contains a~Hamiltonian $m$-cycle $C$ and remove all vertices of $W$. What remains of $C$ is a collection of at most $(k+1)\eps n$ $m$-paths, among which there must be one of length at least $(n-(k+1)\eps n)/((k+1)\eps n)>1/((k+2)\eps)$. Let $P$ be a sub-$m$-path of this $m$-path of length exactly $L:=\lceil 1/((k+2)\eps)\rceil$ and let $j\in[k+1]$ be such that $|V(P)\cap U_j|\ge L/(k+1)$. Then, by Lemma~\ref{weak} with $V_0=V(P)\cap U_j$, we infer that $|E(P[V_{0}])|\geq f(\lambda_{k,m})L_0 - m(m+1)$ with $L_0=|V_0|$. It follows that
 a subgraph of $L_0$ vertices and $M_0:=f(\lambda_{k,m})L_0- m(m+1)$ edges is contained in $G(n,p)$, so
$$\Prob(G_\eps\cup G(n,p)\in\ham^m_n)\le\Prob(G(n,p)\;\mbox{contains an $(L_0,M_0)$-subgraph}).$$
  However, setting $f:=f(\lambda_{k,m})$ and observing that $L_0\ge 1/((k+2)(k+1)\eps)$,
  we have
  $$\frac1f+\mu>\frac{L_0}{fL_0-m(m+1)},$$
  for $\eps$ sufficiently small.
   Consequently, $p=o(n^{-L_0/M_0})$ and, by Fact~\ref{Markov}(a),
  $$\Prob(G(n,p)\;\mbox{contains an $(L_0,M_0)$-subgraph})=o(1).$$
  %$$O(n^{L_0}p^{M_0})= O\left(n^{L_0-(1/f+\mu)(fL_0-c)}\right)= O\left(n^{c/f+c\mu-\mu fL_0}\right)=o(1)$$
 Thus, $\eta=1/f(\lambda_{k,m})$ satisfies Definition~\ref{over}(ii).
\end{proof}
%********************** Proof of Lemma~\ref{weak} **********************

\section{Algorithm REWIRE and proof of Lemma~\ref{weak}}

For the proof of Lemma~\ref{weak}, we will first modify the path $P$ by changing the order of its vertices suitably. To this end, we will construct an algorithm REWIRE (its pseudocode can be found later in this section) and show some properties of its output (Lemma~\ref{perform} below).

For an $m$-path $P$, let $\overrightarrow{V(P)}$ be a linear ordering of the vertices in $V(P)$ in which $P$ traverses them - there are two such orders and we pick one arbitrarily.
Given $P$ and a subset $V_0\subset V(P)$, let us define a sequence of nonempty segments of $\overrightarrow{V(P)}$ as follows. Assume $\overrightarrow{V(P)}=(v_1,\dots,v_L)$, where $L=|V(P)|$. For each $i\in[L]$, let $b_i=0$ if $v_i\in V_0$ and $b_i=1$ otherwise. Let us define two sequences of segments  in $\overrightarrow{V(P)}$:
$S_1, S_2,\dots,S_q$ -- corresponding to the 0-runs of the binary sequence $(b_1,\dots,b_L)$ and $T_0, T_1, T_2,\dots, T_q$ -- corresponding to the 1-runs, both listed in the order they appear in $\overrightarrow{V(P)}$. If $b_1=0$ ($b_L=0$), we artificially define $T_0=\emptyset$ ($T_q=\emptyset$), although sets $T_0$ and $T_q$ are irrelevant for counting the edges of $P[V_0]$. In particular, for all~$i$, $S_i \subset V_{0}$ and  $T_i \cap V_{0} = \emptyset$. Thus, we obtain a partition of $\overrightarrow{V(P)}$ into segments, namely $\overrightarrow{V(P)} = T_0\cup S_1\cup T_1\cup S_2\cup T_2 \cup \cdots\cup S_q\cup T_q$. We will call such a partition \emph{$V_0$-based}.  See Figure~\ref{fig:alg1} for an example.

Algorithm REWIRE keeps changing the order of vertices in $P$ to achieve constraints on the sizes $|S_i|$  and $|T_i|$, $i\in[q]$, as formulated in the next lemma, while not letting the number of edges in $P[V_0]$ grow.

\begin{figure}[t]
\captionsetup[subfigure]{labelformat=empty}

\scalebox{1.1}
{
\begin{tikzpicture}

\coordinate (v1) at (0,0) {};
\coordinate (v2) at (1,0) {};
\coordinate (v3) at (2,-1) {};
\coordinate (v4) at (3,-1) {};
\coordinate (v5) at (4,-1) {};
\coordinate (v6) at (5,-1) {};
\coordinate (v7) at (6,-1) {};
\coordinate (v8) at (7,0) {};
\coordinate (v9) at (8,-1) {};
\coordinate (v10) at (9,-1) {};
\coordinate (v11) at (10,0) {};
\coordinate (v12) at (11,0) {};

%\foreach \i in {1,...,9} {
%    \pgfmathsetmacro{\js}{\i+1}
%    \pgfmathsetmacro{\je}{\i+3}
%    \foreach \j in {\js,...,\je}
%         \draw[line width=0.3mm, color=black]  (v\i) -- (v\j);
%};
\draw[line width=0.3mm, color=black] (v1) -- (v3);
\draw[line width=0.3mm, color=black] (v1) -- (v4);

\draw[line width=0.3mm, color=black] (v2) -- (v4);
\draw[line width=0.3mm, color=black] (v2) -- (v5);

\draw[line width=0.3mm, color=black] (v3) .. controls (3,-0.75) .. (v5);
\draw[line width=0.3mm, color=black] (v3) .. controls (3.5,-0.75) .. (v6);

\draw[line width=0.3mm, color=black] (v4) .. controls (4,-0.75) .. (v6);
\draw[line width=0.3mm, color=black] (v4) .. controls (4.5,-0.75) .. (v7);

\draw[line width=0.3mm, color=black] (v5) .. controls (5,-0.75) .. (v7);
\draw[line width=0.3mm, color=black] (v5) -- (v8);

\draw[line width=0.3mm, color=black] (v6) -- (v8);
\draw[line width=0.3mm, color=black] (v6) .. controls (6.5,-0.75) .. (v9);

\draw[line width=0.3mm, color=black] (v7) -- (v9);
\draw[line width=0.3mm, color=black] (v7) .. controls (7.5,-0.75) .. (v10);

\draw[line width=0.3mm, color=black] (v8) -- (v10);
\draw[line width=0.3mm, color=black] (v8) -- (v11);

\draw[line width=0.3mm, color=black] (v9) -- (v11);
\draw[line width=0.3mm, color=black] (v9) -- (v12);

\draw[line width=0.3mm, color=black] (v10) -- (v12);

\foreach \i in {1,...,11} {
    \pgfmathsetmacro{\j}{\i+1}
    \draw[line width=0.8mm, color=black]  (v\i) -- (v\j);
};

\foreach \i in {1,...,12} {
    \draw[fill=black, inner sep=0pt] (v\i) circle[radius=3pt] node[below] {};
};

\draw[blue,line width=0.4mm, rounded corners=8pt, inner sep=0pt] (-0.5, 0.4) rectangle (1.5, -0.4) node[midway, above=12pt] {$S_1$};
\draw[blue,line width=0.4mm, rounded corners=8pt, inner sep=0pt] (1.5, -0.6) rectangle (6.5, -1.4) node[midway, above=12pt] {$T_1$};
\draw[blue,line width=0.4mm, rounded corners=8pt, inner sep=0pt] (6.5, 0.4) rectangle (7.5, -0.4) node[midway, above=12pt] {$S_2$};
\draw[blue,line width=0.4mm, rounded corners=8pt, inner sep=0pt] (7.5, -0.6) rectangle (9.5, -1.4) node[midway, above=12pt] {$T_2$};
\draw[blue,line width=0.4mm, rounded corners=8pt, inner sep=0pt] (9.5, 0.4) rectangle (11.5, -0.4) node[midway, above=12pt] {$S_3$};

\foreach \i in {1,2,8,11,12} {
    \pgfmathsetmacro{\j}{\i-1};
    \node[color=black] at (\j,-2) {\footnotesize{$b_{\i}\!\!=\!\!0$}};
};

\foreach \i in {3,...,7,9,10} {
    \pgfmathsetmacro{\j}{\i-1};
    \node[color=black] at (\j,-2) {\footnotesize{$b_{\i}\!\!=\!\!1$}};
};

\node[color=black] at (-1.7, 0) {$V_{0}$};
\node[color=black] at (-1, -1) {$V(P)\!\setminus\!V_{0}$};
%\node[color=black] at (-1, -3) {$b_i$};

\foreach \i in {-0.3,-1.3,-2.3} {
    \pgfmathsetmacro{\j}{\i+0.6}
    \fill[black,ultra nearly transparent] (-2,\i) rectangle (11.65,\j);
}

\end{tikzpicture}
}%scale

\caption{A 3-path on 12 vertices and a $V_0$-based partition with $V_0=\{1,2,8,11,12\}$ (the sets $T_0=\emptyset$ and $T_q=\emptyset$ not shown). The thick edges show the order in which the 3-path traverses through the vertex set.}
\label{fig:alg1}	
\end{figure}
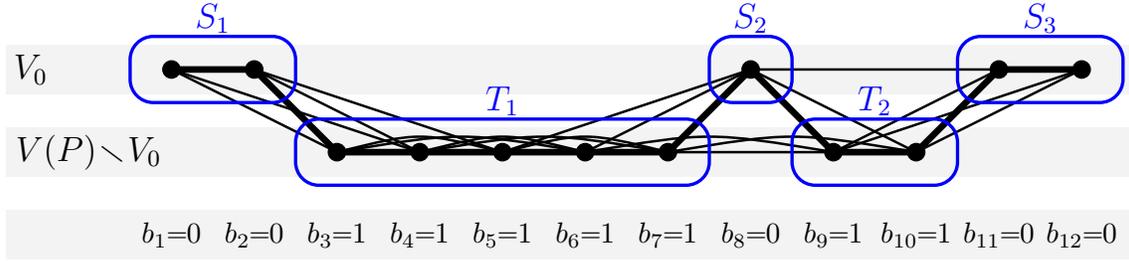	

\begin{lemma}\label{perform}
Given an $m$-path $P$ and a subset $V_0\subset V(P)$, algorithm REWIRE outputs a~modified path $P'$ with $V(P')=V(P)$, $|E(P'[V_0])|\le |E(P[V_0])|$, and a $V_0$-based partition $\overrightarrow{V(P')} = T_0\cup S_1\cup T_1\cup\cdots\cup S_q\cup T_q$ such that, setting $x_i=|S_i|$ and $y_i=|T_i|$, $i\in[q]$, we have
\begin{itemize}
    \item[(i)] $1 \leq x_i \leq m$ for $i\in[q]$,
    \item[(ii)] $0 \leq y_i \leq m$ for $i\in[q-1]$,
    \item[(iii)] $x_i + y_i \geq m$ for $i\in[q-1]$,
    \item[(iv)] $y_i + x_{i+1} \geq m$ for $i\in[q-2]$.
\end{itemize}
\end{lemma}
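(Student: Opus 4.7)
The plan is to design REWIRE as an iterative procedure on the linear ordering $\overrightarrow{V(P')}$, where at each step a local vertex rearrangement addresses one violation of conditions (i)--(iv). Each rearrangement produces a new ordering (hence another $m$-path on the same vertex set $V(P)$), does not increase $|E(P'[V_0])|$, and strictly decreases a monovariant potential function, so REWIRE terminates after finitely many iterations with all four conditions simultaneously satisfied.

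I would introduce four repair moves, one per violation type. Move (A), for $x_i>m$ (violation of (i)): the segment $S_i$ contains a clique $K_{m+1}$ entirely inside $V_0$, and the repair relocates the $(m+1)$-st vertex of $S_i$ into a nearby block of $V\setminus V_0$, reducing $x_i$ by one. Move (B), for $y_i>m$ (violation of (ii)): the symmetric operation inserts a $V_0$-vertex into the interior of $T_i$, creating a new singleton $S$-block and splitting $T_i$ into two shorter pieces. Move (C), for $x_i+y_i<m$ (violation of (iii)): swap the block $T_i$ with the first $y_i$ vertices of $S_{i+1}$, thereby merging $S_i$ with those vertices into a single $S$-segment of size $x_i+y_i\le m-1$. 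Move (D) is the analogue for violations of (iv). A natural potential function is the lexicographic triple $\Phi=\bigl(\#\{i\colon x_i>m\text{ or }y_i>m\},\;\#\{i\colon(\mathrm{iii})\text{ or }(\mathrm{iv})\text{ fails}\},\;q\bigr)$, which decreases strictly under every move and is bounded below.

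The main technical hurdle is verifying the edge-monotonicity $|E(P'[V_0])|\le|E(P[V_0])|$ for each move. For moves (A) and (B) I would exploit that a clique $K_{m+1}$ inside $V_0$ contributes $\binom{m+1}{2}$ edges to $P[V_0]$, so removing a vertex from the clique subtracts exactly $m$ such edges, whereas the relocated vertex picks up at most $m$ new $V_0$-neighbors at its destination; a careful local count yields the desired inequality and typically a strict decrease. For the block-swap moves (C) and (D), which are more delicate, I would track explicitly the set of pairs $(u,v)\in V_0\times V_0$ whose ordering-distance crosses the threshold $m$ as a result of the swap, and construct an injection from pairs newly becoming edges into pairs ceasing to be edges. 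This pairing exploits the inequality $x_i+y_i<m$ (respectively $y_i+x_{i+1}<m$) and the symmetry of the two swapped blocks; handling the knock-on interaction of the merged segment with its neighbors $S_{i-1},T_{i-1},T_{i+1},S_{i+2}$ is where I expect the argument to be most delicate, since the ordering-distances among those outer vertices are unchanged, but the cross-distances to the merged block shift in a structured way that must be matched against the conditions of the move.
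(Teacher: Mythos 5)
Your high-level plan (local repair moves driven by a potential function, with per-move edge-monotonicity) is the right kind of argument, and it is in spirit what the paper does. But the specific moves you propose, and the way you account for edges, leave genuine gaps.

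First, the moves themselves. The paper's REWIRE uses two primitives, $\textrm{SHIFT\_RIGHT}$ and $\textrm{SHIFT\_LEFT}$, that carry a prefix/suffix of one segment \emph{past} the intervening segment to the adjacent segment of the \emph{same type} ($S$ to $S$, or $T$ to $T$), so that the number of blocks never goes up and the partition structure is preserved. Your Move (A) sends a $V_0$-vertex \emph{into the interior of} a $T$-block, which creates a new singleton $S$-block; your Move (B) is even harder to parse — it ``inserts a $V_0$-vertex into the interior of $T_i$'' without saying from which $S$-block that vertex is taken, and a single such insertion does not make $|T_i|\le m$ when $|T_i|$ is large. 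Your Move (C) is a swap of $T_i$ with a prefix of $S_{i+1}$, after which the new $x_i$ equals the old $x_i+y_i$ but the new $y_i$ is unchanged, so $x_i+y_i$ becomes $x_i+2y_i$, which need not reach $m$; (iii) may still fail. Because of all this, your lexicographic potential $\Phi$ is not clearly a monovariant: Move (A) raises $q$ while possibly leaving the first coordinate unchanged (if $x_i-1>m$), Move (B) may introduce a fresh (iii)-violation at the new singleton $S$-block, and Move (C) may leave the (iii)-violation count unchanged. Termination is therefore not established.

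Second, and more importantly, the edge-monotonicity for the shrinking moves is exactly where the paper's processing order does the work that you would have to re-derive by hand. The paper runs the repairs strictly left to right (with carefully designed gotos), so that when Step (3) touches index $i$, the invariant $y_{i-1}+x_i\ge m$ is already in force. This is what lets it conclude that the vertices dragged from $S_{i+1}$ into $S_i$ land too far from $S_{i-1}$ to pick up any new $V_0$-edges, while they lose the complete bipartite graph with $S_{i+1}\setminus S$ — giving a one-sided count with no injection needed. Your moves are fired in arbitrary order by $\Phi$, so you cannot assume any of (i)--(iv) holds on neighboring indices; the ``delicate'' injection you defer is then genuinely hard, and you give no indication of how to construct it when, say, $S_{i-1}$ is still very long or $T_{i-1}$ is still very short. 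You should either adopt left-to-right processing and maintain the invariant explicitly, or spell out the injection; as written, the technical core of the lemma — the inequality $|E(P'[V_0])|\le|E(P[V_0])|$ for the contracting moves — is not proved.
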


Note that condition (i) above guarantees that each segment $S_i$ induces in $P'[V_{j}]$ a~clique of size ${x_i\choose 2}$, while conditions (ii)--(iv), combined, guarantee that each pair $(S_i, S_{i+1})$, $i\in[q-2]$, induces in $P'[V_{j}]$ a bridge of size exactly ${m+1-y_i\choose 2}$ (cf. Definition~\ref{bra}(a)).

Before proving Lemma~\ref{perform} we show how it implies Lemma~\ref{weak}.

\begin{proof}[Proof of Lemma~\ref{weak}] Let $P$ and $V_0$ be as in the assumptions of the lemma. Setting $L:=|V(P)|$ and $L_0:=|V_0|$, we have
\begin{equation}\label{eq:largest_color}
    k L_0 \ge L - L_0.
\end{equation}

After applying algorithm REWIRE to the pair $(P,V_0)$, we obtain a new path $P'$ with $|E(P[V_0])| \geq |E(P'[V_0])|$ and a $V_0$-bounded partition $\overrightarrow{V(P')}$ satisfying conditions (i)--(iv) of Lemma~\ref{perform}. Condition (iv) does not say anything about $i=q-1$, so we do not know if there is a full $m$-bridge between sets $S_{q-1}$ and $S_q$. To accommodate this deficiency, set $e(S_{q-1},S_q)$ for the number of edges in $P$ with one endpoint in $S_{q-1}$ and the other in $S_q$ and notice that
\begin{equation}\label{lastbridge}
0\le{m+1-y_{q-1}\choose 2}-e(S_{q-1},S_q)\le\binom{m+1}2.
\end{equation}

There is also an issue with condition (ii). Since  we do not have control over how large $y_q$ might be, we subdivide it into terms equal to $m$ plus some remainder, that is, we write
$y_q=y'_q+\cdots+ y'_{q'}$, where $y'_q=\cdots=y'_{q'-1}=m$ and $y'_{q'}\le m$ (when $y_q\le m$, we just have $q'=q$, so nothing changes), and drop the primes over the $y$'s, for convenience. Then ${m+1-y_{i}\choose 2}=0$ for $i=q,\dots, q'-1$, while
\begin{equation}\label{lasty}
{m+1-y_{q'}\choose 2}\le\binom{m+1}2.
\end{equation}
To equalize the number of terms in both sums below, we also, quite artificially, set $x_i:=0$ for all $q+1\le i\le q'$.

 Applying Jensen's inequality to the convex function $\phi(z) = \frac{z(z-1)}{2}$ twice,  with $z_i=x_i$ and $z_i=m+1-y_i$, $i\in[q']$,
 we get
 \begin{equation}\label{Jensen}
 \sum_{i=1}^{q'} {x_i \choose 2}\ge q'{L_0/q' \choose 2}\quad\mbox{and}\quad\sum_{i=1}^{q'} {m+1-y_i\choose 2}\geq q'{m+1-(L-L_0)/q' \choose 2}.
 \end{equation}

 Using (\ref{eq:largest_color})--\eqref{Jensen} and recalling from Section~\ref{newres} the definition of function $f:=f_{k,m}$ with minimum at $\lambda:=\lambda_{k,m}$,
%(taking care of the size of the bridge between $S_{q-1}$ and $S_q$ and the term ${m+1-y_q\choose 2}$ below)
we infer that
\begin{align*}
    |E(P[V_0])| \geq |E(P'[V_0])| & \geq  \sum_{i=1}^q {x_i \choose 2} + \sum_{i=1}^{q-2} {m+1-y_i\choose 2}+e(S_{q-1},S_q) \\
    &\overset{\eqref{lastbridge},\eqref{lasty}}{\geq }\sum_{i=1}^{q'} {x_i \choose 2} + \sum_{i=1}^{q'} {m+1-y_i\choose 2} - m(m+1)\\
    &\hspace{-0.15cm} \overset{\eqref{Jensen}}{\geq}  q'{L_0/q' \choose 2} + q'{m+1-(L-L_0)/q' \choose 2} -  m(m+1) \\
    &\hspace{-0.15cm} \overset{(\ref{eq:largest_color})}{\ge} q'{L_0/q'\choose 2} + q'{m+1-kL_0/q'\choose 2} -  m(m+1) \\
    & =  f(L_0/q')L_0 - m(m+1) \geq f(\lambda)L_0 - m(m+1).
\end{align*}
%where
 %$$c(m) \ge {m+1-y_{q-1}\choose 2}-e(S_{q-1},S_q) +{m+1-y_{q'}\choose 2},$$
  %so, indeed, $c(m)=m(m+1)$ will do.
\end{proof}

It remains to describe the algorithm REWIRE, and prove Lemma~\ref{perform}.
In what follows, by a slight abuse of notation we will treat segments as both sets and sequences of vertices, depending on the context.

%Set $x_i = |S_i|$ and $y_i = |T_i|$, $i=1,2,\ldots$.

As mentioned earlier, to achieve its goal, REWIRE keeps changing the order of vertices in $P$. It proceeds by incremental, local changes, from left to right, never going back to what has been already handled. Every change of the order of vertices results in a new $V_0$-based partition into sets $S_i$ and $T_i$. In each step, a new partition is obtained from the previous one by just a single shift of some part of segment $S_i$ or $T_i$ to, resp., $S_{i+1}$ or $T_{i+1}$, or vice versa.
We found it more transparent to describe these steps directly in terms of such shifts.

 For this purpose REWIRE uses operations $\textrm{SHIFT\_RIGHT}(A,B,h)$ and \newline $\textrm{SHIFT\_LEFT}(A,B,h)$, where $A$ and $B$ are two segments of the vertex set $\overrightarrow{V(P)}$, $A$ to the left of $B$, which are separated by some segment $C$, and $h$ is a positive integer. Given $h$, let $A_1, A_2, B_1, B_2$ be such that $A=A_1A_2$ and $B=B_1B_2$, with $|A_2|=\min\{h,|A|\}$ and $|B_1|=\min\{h,|B|\}$.

 $\textrm{SHIFT\_RIGHT}(A,B,h)$ changes the order of vertices $ACB$ to $A_1CA_2B$ and resets $A = A\setminus A_2$ and $B = B\cup A_2$, while $\textrm{SHIFT\_LEFT}(A,B,h)$ changes the order of $ABC$ to $AB_1CB_2$ and resets $A = A\cup B_1$ and $B = B\setminus B_1$. In other words, $\textrm{SHIFT\_RIGHT}(A,B,h)$ moves the last $\min\{h,|A|\}$ vertices of $A$ to the front of $B$, while $\textrm{SHIFT\_LEFT}(A,B,h)$ moves the first $\min\{h,|B|\}$ vertices of $B$ to the end of $A$. %See Figure~\ref{fig:alg2} for an example.

%The algorithm performs an outer loop whose every iteration consists of four steps connected together by three internal loops. 
The four steps of the outer loop of the algorithm correspond to the four conditions (i)--(iv) in Lemma~\ref{perform}. We illustrate them by Figures~\ref{fig:step1}--\ref{fig:step4}. In all these figures, the thick black line indicates the order  $\overrightarrow{V(P')}$, while the original order is always from left to right.
Finally, note that throughout the iterations the number of sets in the partition may both decrease or increase.

\begin{algorithm}\label{alg:REWIRE}
\setstretch{0.8}
\SetKwComment{Comment}{}{}
\SetKwFunction{shiftright}{SHIFT\_RIGHT}
\SetKwFunction{shiftleft}{SHIFT\_LEFT}
\SetKwProg{myproc}{procedure}{}{}
\SetKw{And}{\textbf{and}}
\caption{REWIRE}\label{alg:rewire}

{\small{

\KwIn{\justifying A pair $(P,V_0)$, where $P$ is an $m$-path on vertex set $V(P)$ and $V_0\subset V(P)$; let
$\overrightarrow{V(P)}=T_0\cup S_1\cup T_1\cup\dots\cup S_q\cup T_q$,
where  $T_0$ and $T_q$ might be empty, be the $V_0$-based partition of $V(P)$.}
\medskip

\KwOut{\justifying A path $P'$ with $V(P')=V(P)$ such that $|E(P'[V_0])|\le |E(P[V_0])|$ and the $V_0$-based partition $\overrightarrow{V(P')}=T'_0\cup S'_1\cup T'_1\cup\dots\cup S_q'\cup T_q'$ satisfies conditions (i)--(iv) of Lemma~\ref{perform}.}
\medskip

\Comment{/* In the description of the algorithm below we suppress superscripts~$'$.\ */}
\medskip

$i = 1$\;
\While{$i\leq q$}{
\Indp
    \hspace{-0.5cm}{\footnotesize{\texttt{(1)}}}\! \If{$|S_i|>m$}{
        \If{$i=q$}{
            $S_{q+1}:=\emptyset$\;
            $T_{q+1}:=\emptyset$\;
            $q:= q+1$\;
        }
        \shiftright{$S_i,S_{i+1},|S_i|-m$}\;
    }
    \hspace{-0.5cm}{\footnotesize{\texttt{(2)}}}\! \If{$|T_i|>m$ \And $i<q$}{
        \shiftright{$T_i,T_{i+1},|T_i|-m$}\;
    }
    \hspace{-0.5cm}{\footnotesize{\texttt{(3)}}}\! \If{$|S_i|+|T_i|<m$ \And $i<q$}{
        \shiftleft{$S_i,S_{i+1},m-|S_i|-|T_i|$}\;
    \If{$S_{i+1}=\emptyset$}{
        $T_i=T_{i}\cup T_{i+1}$\;
        \For{$j\ge i+1$}{
            $S_{j}:=S_{j+1}$\;
            $T_j:=T_{j+1}$\;
        }
        $q:=q-1$\;
        \If{$|T_i|>m$ \And $i<q$}{
            \textbf{go to} {\footnotesize{\texttt{(2)}}}\;
        }
        \If{$i<q$}{
            \textbf{go to} {\footnotesize{\texttt{(3)}}}\;
        }
    }
    }

    \hspace{-0.5cm}{\footnotesize{\texttt{(4)}}}\! \If{$|T_i|+|S_{i+1}|<m$ \And $i<q-1$}{
        \shiftleft{$T_i,T_{i+1},m-|T_i|-|S_{i+1}|$}\;
        \If{$T_{i+1}=\emptyset$}{
            $S_{i+1}=S_{i+1}\cup S_{i+2}$\;
            $T_{i+1}:=T_{i+2}$\;
            \For{$j\ge i+2$}{
                $S_{j}:=S_{j+1}$\;
                $T_j:=T_{j+1}$\;
                }
            $q:=q-1$\;
            \textbf{go to} {\footnotesize{\texttt{(4)}}}\;
        }
   }

   $i = i+1$\;
}
}}
\end{algorithm}

\begin{proof}[Proof of Lemma~\ref{perform}] First, observe that the algorithm comes to a stop eventually. Indeed, in each iteration of the outer ``while'' loop, the internal loop \texttt{(2)--(3)--(2)} can be performed only finitely many times, since each time the value of $x_i + y_i$ increases until it exceeds $m$, or $q$ decreases until $i$ reaches $q$. For loops \texttt{(3)--(3)} and \texttt{(4)--(4)}, the situation is analogous. Second, the algorithm outputs a sequence $(T_0,S_1,\dots,S_{q},T_{q})$ which satisfies conditions (i)--(iv) corresponding to steps \texttt{(1)--(4)}.

To complete the proof, we now argue that in each step of the algorithm, with respect to the edges spanned by vertices in $V_0$, the number of edges destroyed is at least as large as the number of edges created, and so, ultimately, $|E(P'[V_0])|\le |E(P[V_0])|$.

%%% STEP 1 %%%
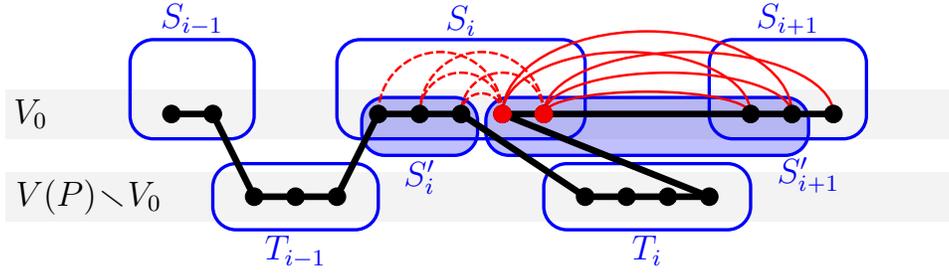
\begin{figure}[t]
\captionsetup[subfigure]{labelformat=empty}

\scalebox{1.1}
{
\begin{tikzpicture}

\coordinate (v1) at (0,0) {};
\coordinate (v2) at (0.5,0) {};
\coordinate (v3) at (1,-1) {};
\coordinate (v4) at (1.5,-1) {};
\coordinate (v5) at (2,-1) {};
\coordinate (v6) at (2.5,0) {};
\coordinate (v7) at (3,0) {};
\coordinate (v8) at (3.5,0) {};
\coordinate (v9) at (4,0) {};
\coordinate (v10) at (4.5,0) {};
\coordinate (v11) at (5,-1) {};
\coordinate (v12) at (5.5,-1) {};
\coordinate (v13) at (6,-1) {};
\coordinate (v14) at (6.5,-1) {};
\coordinate (v15) at (7,0) {};
\coordinate (v16) at (7.5,0) {};
\coordinate (v17) at (8,0) {};

\draw[blue,line width=0.4mm, rounded corners=8pt, inner sep=0pt] (-0.5, 0.9) rectangle (1, -0.3) node[midway, above=18pt] {$S_{i-1}$};
\draw[blue,line width=0.4mm, rounded corners=8pt, inner sep=0pt] (0.5, -1.4) rectangle (2.5, -0.6) node[midway, below=13pt] {$T_{i-1}$};
\draw[blue,line width=0.4mm, rounded corners=8pt, inner sep=0pt] (2, 0.9) rectangle (5, -0.3) node[midway, above=18pt] {$S_i$};
\draw[blue,line width=0.4mm, rounded corners=8pt, inner sep=0pt] (4.5, -1.4) rectangle (7, -0.6) node[midway, below=13pt] {$T_i$};
\draw[blue,line width=0.4mm, rounded corners=8pt, inner sep=0pt] (6.5, 0.9) rectangle (8.4, -0.3) node[midway, above=18pt] {$S_{i+1}$};
\draw[blue,line width=0.4mm, rounded corners=8pt, inner sep=0pt] (2.3, 0.2) rectangle (3.7, -0.5) node[midway, below=11pt] {$S_i'$};
\fill[blue,nearly transparent,rounded corners=8pt, inner sep=0pt]  (2.3, 0.2) rectangle (3.7, -0.5) {};
\draw[blue,line width=0.4mm, rounded corners=8pt, inner sep=0pt] (3.8, 0.2) rectangle (7.7, -0.5) node[right, below=0pt] {$S_{i+1}'$};
\fill[blue,nearly transparent,rounded corners=8pt, inner sep=0pt] (3.8, 0.2) rectangle (7.7, -0.5) {};

\foreach \i in {1,...,7,9,11,12,13,15,16} {
    \pgfmathsetmacro{\j}{\i+1}
    \draw[line width=0.8mm, color=black]  (v\i) -- (v\j);
};
\draw[line width=0.8mm, color=black]  (v8) -- (v11);
\draw[line width=0.8mm, color=black]  (v9) -- (v14);
\draw[line width=0.8mm, color=black]  (v10) -- (v15);

\draw[line width=0.3mm, color=red, dash pattern=on 3pt off 1pt] (v9) arc (0:180:0.25);
\draw[line width=0.3mm, color=red, dash pattern=on 3pt off 1pt] (v9) arc (0:180:0.5);
\draw[line width=0.3mm, color=red, dash pattern=on 3pt off 1pt] (v9) arc (0:180:0.75);
\draw[line width=0.3mm, color=red, dash pattern=on 3pt off 1pt] (v10) arc (0:180:0.5);
\draw[line width=0.3mm, color=red, dash pattern=on 3pt off 1pt] (v10) arc (0:180:0.75);

\draw[line width=0.3mm, color=red] (v15) arc (0:180:1.25 and 0.27);
\draw[line width=0.3mm, color=red] (v15) arc (0:180:1.5 and 0.75);
\draw[line width=0.3mm, color=red] (v16) arc (0:180:1.5 and 0.5);
\draw[line width=0.3mm, color=red] (v17) arc (0:180:1.75 and 0.75);
\draw[line width=0.3mm, color=red] (v16) arc (0:180:1.75 and 1);

\foreach \i in {1,...,17} {
    \draw[fill=black, inner sep=0pt] (v\i) circle[radius=3pt] node[below] {};
};
\draw[red,fill=red, inner sep=0pt] (v9) circle[radius=3pt] node[below] {};
\draw[red,fill=red, inner sep=0pt] (v10) circle[radius=3pt] node[below] {};

\node[color=black] at (-1.7, 0) {$V_{0}$};
\node[color=black] at (-1, -1) {$V(P)\!\setminus\!V_{0}$};

\foreach \i in {-0.3,-1.3} {
    \pgfmathsetmacro{\j}{\i+0.6}
    \fill[black,ultra nearly transparent] (-2,\i) rectangle (9.5,\j);
}

\end{tikzpicture}
}%scale

\caption{Illustration to step (1) of algorithm REWIRE. Operation $\textrm{SHIFT\_RIGHT}(S_i,S_{i+1},2)$ moves the last two vertices of $S_i$ (red) to $S_{i+1}$.
Some of the original edges (dashed red) inside $S_i$ disappear. Instead, at least as many new edges (red) are added.}
\label{fig:step1}	
\end{figure}	

Step \texttt{(1)}: It suffices to compare how many edges are lost/gained by moving the last vertex $v$ of $S_i$ to the front of $S_{i+1}$. Before the shift, $v$ had $m$ neighbors in $S_i$ (because $x_i>m$) and some number $z$ of neighbors to the right. If $y_i\ge m$, then $z=0$ and after the shift $v$ has no neighbors to the left and at most $m$ neighbors to the right, so the balance is non-positive. If $y_i<m$, then $z\le m-y_i$ and after the shift, $v$ has $m-y_i$ neighbors to the left and at most $z+y_i$ neighbors to the right, thus again, the balance is non-positive (see Figure~\ref{fig:step1}).

%%% STEP 2 %%%
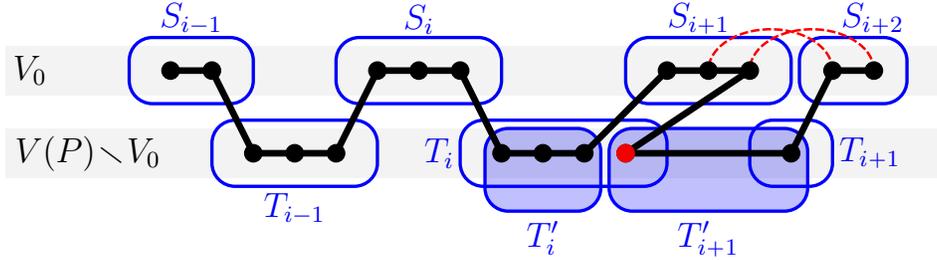
\begin{figure}[t]
\captionsetup[subfigure]{labelformat=empty}

\scalebox{1.1}
{
\begin{tikzpicture}

\coordinate (v1) at (0,0) {};
\coordinate (v2) at (0.5,0) {};
\coordinate (v3) at (1,-1) {};
\coordinate (v4) at (1.5,-1) {};
\coordinate (v5) at (2,-1) {};
\coordinate (v6) at (2.5,0) {};
\coordinate (v7) at (3,0) {};
\coordinate (v8) at (3.5,0) {};
\coordinate (v9) at (4,-1) {};
\coordinate (v10) at (4.5,-1) {};
\coordinate (v11) at (5,-1) {};
\coordinate (v12) at (5.5,-1) {};
\coordinate (v13) at (6,0) {};
\coordinate (v14) at (6.5,0) {};
\coordinate (v15) at (7,0) {};
\coordinate (v16) at (7.5,-1) {};
\coordinate (v17) at (8,0) {};
\coordinate (v18) at (8.5,0) {};

\draw[blue,line width=0.4mm, rounded corners=8pt, inner sep=0pt] (-0.5, 0.4) rectangle (1, -0.4) node[midway, above=12pt] {$S_{i-1}$};
\draw[blue,line width=0.4mm, rounded corners=8pt, inner sep=0pt] (0.5, -1.4) rectangle (2.5, -0.6) node[midway, below=13pt] {$T_{i-1}$};
\draw[blue,line width=0.4mm, rounded corners=8pt, inner sep=0pt] (2, 0.4) rectangle (4, -0.4) node[midway, above=12pt] {$S_{i}$};
\draw[blue,line width=0.4mm, rounded corners=8pt, inner sep=0pt] (3.5, -1.4) rectangle (6, -0.6) node[left=37pt,midway] {$T_{i}$};
\draw[blue,line width=0.4mm, rounded corners=8pt, inner sep=0pt] (5.5, 0.4) rectangle (7.5, -0.4) node[midway, above=12pt] {$S_{i+1}\;\;$};
\draw[blue,line width=0.4mm, rounded corners=8pt, inner sep=0pt] (7, -1.4) rectangle (8, -0.6) node[right=16pt,midway] {$T_{i+1}$};
\draw[blue,line width=0.4mm, rounded corners=8pt, inner sep=0pt] (7.6, 0.4) rectangle (8.9, -0.4) node[midway, above=12pt] {$\;\;\;\;S_{i+2}$};

\draw[blue,line width=0.4mm, rounded corners=8pt, inner sep=0pt] (3.8, -0.7) rectangle (5.2, -1.7) node[below=17pt,midway] {$T_{i}'$};
\fill[blue,nearly transparent,rounded corners=8pt, inner sep=0pt]  (3.8, -0.7) rectangle (5.2, -1.7) {};
\draw[blue,line width=0.4mm, rounded corners=8pt, inner sep=0pt] (5.3, -0.7) rectangle (7.7, -1.7) node[below=17pt,midway] {$T_{i+1}'$};
\fill[blue,nearly transparent,rounded corners=8pt, inner sep=0pt]  (5.3, -0.7) rectangle (7.7, -1.7) {};

\foreach \i in {1,...,10,13,14,16,17} {
    \pgfmathsetmacro{\j}{\i+1}
    \draw[line width=0.8mm, color=black]  (v\i) -- (v\j);
};
\draw[line width=0.8mm, color=black]  (v11) -- (v13);
\draw[line width=0.8mm, color=black]  (v12) -- (v15);
\draw[line width=0.8mm, color=black]  (v12) -- (v16);

\draw[line width=0.3mm, color=red, dash pattern=on 3pt off 1pt] (v17) arc (0:180:0.75 and 0.5);
\draw[line width=0.3mm, color=red, dash pattern=on 3pt off 1pt] (v18) arc (0:180:0.75 and 0.5);

\foreach \i in {1,...,18} {
    \draw[fill=black, inner sep=0pt] (v\i) circle[radius=3pt] node[below] {};
};
\draw[red,fill=red, inner sep=0pt] (v12) circle[radius=3pt] node[below] {};

\node[color=black] at (-1.7, 0) {$V_{0}$};
\node[color=black] at (-1, -1) {$V(P)\!\setminus\!V_{0}$};

\foreach \i in {-0.3,-1.3} {
    \pgfmathsetmacro{\j}{\i+0.6}
    \fill[black,ultra nearly transparent] (-2,\i) rectangle (9.4,\j);
}

\end{tikzpicture}
}%scale

\caption{Illustration to step (2) of algorithm REWIRE. Operation $\textrm{SHIFT\_RIGHT}(T_{i},T_{i+1},1)$  moves the last vertex (red) of $T_{i}$ to $T_{i+1}$. Some of the original bridge-edges (dashed red) between $S_{i+1}$ and $S_{i+2}$ disappear, and no new edges are created.
}
\label{fig:alg2}
\end{figure}	

Step \texttt{(2)}: Note that after the shift the segment $S_{i+1}$ is at distance exactly $m+1$ from the segment $S_{i}$, hence there are no edges going to the left from $S_{i+1}$, and the distance between $S_{i+1}$ and $S_{i+2}$ (if there is such a segment) increases, so we can only lose edges (see Figure~\ref{fig:alg2}).

%%% STEP 3 %%%
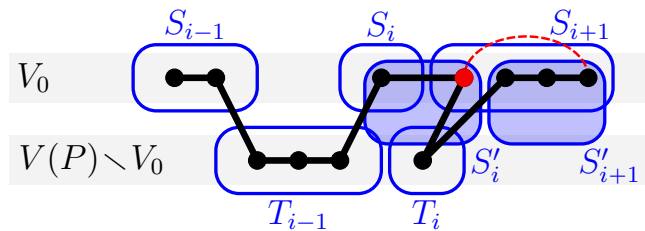
\begin{figure}[t]
\captionsetup[subfigure]{labelformat=empty}

\scalebox{1.1}
{
\begin{tikzpicture}

\coordinate (v1) at (0,0) {};
\coordinate (v2) at (0.5,0) {};
\coordinate (v3) at (1,-1) {};
\coordinate (v4) at (1.5,-1) {};
\coordinate (v5) at (2,-1) {};
\coordinate (v6) at (2.5,0) {};
\coordinate (v7) at (3,-1) {};
\coordinate (v8) at (3.5,0) {};
\coordinate (v9) at (4,0) {};
\coordinate (v10) at (4.5,0) {};
\coordinate (v11) at (5,0) {};

\draw[blue,line width=0.4mm, rounded corners=8pt, inner sep=0pt] (-0.5, 0.4) rectangle (1, -0.4) node[midway, above=12pt] {$S_{i-1}$};
\draw[blue,line width=0.4mm, rounded corners=8pt, inner sep=0pt] (0.5, -1.4) rectangle (2.5, -0.6) node[midway, below=13pt] {$T_{i-1}$};
\draw[blue,line width=0.4mm, rounded corners=8pt, inner sep=0pt] (2, 0.4) rectangle (3, -0.4) node[midway, above=12pt] {$S_i$};
\draw[blue,line width=0.4mm, rounded corners=8pt, inner sep=0pt] (2.6, -1.4) rectangle (3.5, -0.6) node[midway, below=13pt] {$T_i$};
\draw[blue,line width=0.4mm, rounded corners=8pt, inner sep=0pt] (3.1, 0.4) rectangle (5.3, -0.4) node[midway, above=12pt] {$\;\;\;\;\;\;\;\;\;\;\;\;S_{i+1}$};
\draw[blue,line width=0.4mm, rounded corners=8pt, inner sep=0pt] (2.3, 0.2) rectangle (3.7, -0.8) node[midway, below=15pt] {$\;\;\;\;\;\;\;\;\;\;\;\;\;S_i'$};
\fill[blue,nearly transparent,rounded corners=8pt, inner sep=0pt]  (2.3, 0.2) rectangle (3.7, -0.8) {};
\draw[blue,line width=0.4mm, rounded corners=8pt, inner sep=0pt] (3.8, 0.2) rectangle (5.2, -0.8) node[midway, below=15pt] {$\;\;\;\;\;\;\;\;\;\;\;\;\;S_{i+1}'$};
\fill[blue,nearly transparent,rounded corners=8pt, inner sep=0pt] (3.8, 0.2) rectangle (5.2, -0.8) {};

\foreach \i in {1,...,5,9,10} {
    \pgfmathsetmacro{\j}{\i+1}
    \draw[line width=0.8mm, color=black]  (v\i) -- (v\j);
};
\draw[line width=0.8mm, color=black]  (v6) -- (v8);
\draw[line width=0.8mm, color=black]  (v7) -- (v8);
\draw[line width=0.8mm, color=black]  (v7) -- (v9);

\draw[line width=0.3mm, color=red, dash pattern=on 3pt off 1pt] (v11) arc (0:180:0.75 and 0.5);

\foreach \i in {1,...,11} {
    \draw[fill=black, inner sep=0pt] (v\i) circle[radius=3pt] node[below] {};
};
\draw[red,fill=red, inner sep=0pt] (v8) circle[radius=3pt] node[below] {};

\node[color=black] at (-1.7, 0) {$V_{0}$};
\node[color=black] at (-1, -1) {$V(P)\!\setminus\!V_{0}$};

\foreach \i in {-0.3,-1.3} {
    \pgfmathsetmacro{\j}{\i+0.6}
    \fill[black,ultra nearly transparent] (-2,\i) rectangle (5.8,\j);
}

\end{tikzpicture}
}%scale

\caption{Illustration to step (3) of algorithm REWIRE. Operation $\textrm{SHIFT\_LEFT}(S_i,S_{i+1},1)$  moves the first vertex (red) of $S_{i+1}$ to $S_{i}$.
One of the original edges (dashed red) in $S_{i+1}$ disappears and no new edges are added.}
\label{fig:step3}	
\end{figure}	

Step \texttt{(3)}: Let $S\subset S_{i+1}$ be the shifted subsegment. Note that each vertex of $S$ was originally at distance at most $m$ from each vertex in $S_i$. Hence, before the shift, the pair $(S_i,S)$ induced a complete bipartite graph in $P'$. Moreover (for $i\ge2$), since $y_{i-1}+x_i\geq m$, after the shift, the subsegment $S$ lies too far away from $S_{i-1}$ to create any new edges whatsoever. On the other hand, the distance between $S$ and $S_{i+1}\!\setminus\! S$ increases, so we can only lose edges (see Figure~\ref{fig:step3}).
    	
%%% STEP 4 %%%
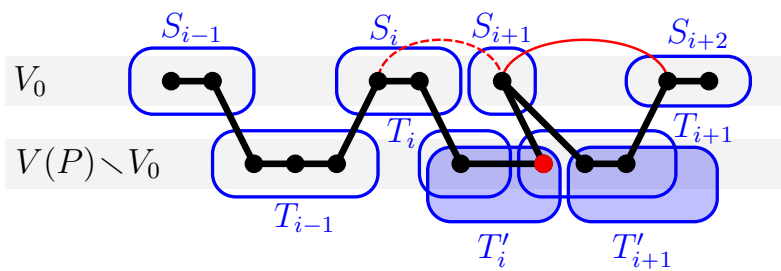
\begin{figure}[t]
\captionsetup[subfigure]{labelformat=empty}

\scalebox{1.1}
{
\begin{tikzpicture}

\coordinate (v1) at (0,0) {};
\coordinate (v2) at (0.5,0) {};
\coordinate (v3) at (1,-1) {};
\coordinate (v4) at (1.5,-1) {};
\coordinate (v5) at (2,-1) {};
\coordinate (v6) at (2.5,0) {};
\coordinate (v7) at (3,0) {};
\coordinate (v8) at (3.5,-1) {};
\coordinate (v9) at (4,0) {};
\coordinate (v10) at (4.5,-1) {};
\coordinate (v11) at (5,-1) {};
\coordinate (v12) at (5.5,-1) {};
\coordinate (v13) at (6,0) {};
\coordinate (v14) at (6.5,0) {};
\coordinate (v18) at (8.5,0) {};

\draw[blue,line width=0.4mm, rounded corners=8pt, inner sep=0pt] (-0.5, 0.4) rectangle (1, -0.4) node[midway, above=12pt] {$S_{i-1}$};
\draw[blue,line width=0.4mm, rounded corners=8pt, inner sep=0pt] (0.5, -1.4) rectangle (2.5, -0.6) node[midway, below=13pt] {$\;\;T_{i-1}$};
\draw[blue,line width=0.4mm, rounded corners=8pt, inner sep=0pt] (2, 0.4) rectangle (3.5, -0.4) node[midway, above=12pt] {$\!\!\!\!\!S_{i}$};
\draw[blue,line width=0.4mm, rounded corners=8pt, inner sep=0pt] (3, -1.4) rectangle (4.1, -0.6) node[midway, above=5pt] {$T_{i}\;\;\;\;\;\;\;\;\;\;\;\;\;$};
\draw[blue,line width=0.4mm, rounded corners=8pt, inner sep=0pt] (3.6, 0.4) rectangle (4.4, -0.4) node[midway, above=12pt] {$S_{i+1}$};
\draw[blue,line width=0.4mm, rounded corners=8pt, inner sep=0pt] (4.2, -1.4) rectangle (6.1, -0.6) node[midway, above=6pt] {$\;\;\;\;\;\;\;\;\;\;\;\;\;\;\;\;\;\;\;\;\;\;T_{i+1}$};

\draw[blue,line width=0.4mm, rounded corners=8pt, inner sep=0pt] (5.5, 0.3) rectangle (7, -0.3) node[midway, above=10pt] {$\;\;S_{i+2}$};

\draw[blue,line width=0.4mm, rounded corners=8pt, inner sep=0pt] (3.1, -1.7) rectangle (4.7, -0.8) node[below=16pt,midway] {$T_{i}'$};
\fill[blue,nearly transparent,rounded corners=8pt, inner sep=0pt]  (3.1, -1.7) rectangle (4.7, -0.8) {};
\draw[blue,line width=0.4mm, rounded corners=8pt, inner sep=0pt] (4.8, -1.7) rectangle (6.6, -0.8) node[below=16pt,midway] {$T_{i+1}'$};
\fill[blue,nearly transparent,rounded corners=8pt, inner sep=0pt]  (4.8, -1.7) rectangle (6.6, -0.8) {};
\foreach \i in {1,...,7,9,11,12,13} {
    \pgfmathsetmacro{\j}{\i+1}
    \draw[line width=0.8mm, color=black]  (v\i) -- (v\j);
};
\draw[line width=0.8mm, color=black]  (v8) -- (v10);
\draw[line width=0.8mm, color=black]  (v9) -- (v11);

\draw[line width=0.3mm, color=red] (v13) arc (0:180:1 and 0.5);
\draw[line width=0.3mm, color=red, dash pattern=on 3pt off 1pt] (v9) arc (0:180:0.75 and 0.5);

\foreach \i in {1,...,14} {
    \draw[fill=black, inner sep=0pt] (v\i) circle[radius=3pt] node[below] {};
};
\draw[red,fill=red, inner sep=0pt] (v10) circle[radius=3pt] node[below] {};

\node[color=black] at (-1.7, 0) {$V_{0}$};
\node[color=black] at (-1, -1) {$V(P)\!\setminus\!V_{0}$};

\foreach \i in {-0.3,-1.3} {
    \pgfmathsetmacro{\j}{\i+0.6}
    \fill[black,ultra nearly transparent] (-2,\i) rectangle (7.5,\j);
}

\end{tikzpicture}
}%scale

\caption{Illustration to step (4) of algorithm REWIRE. Operation  $\textrm{SHIFT\_LEFT}(T_{i},T_{i+1},1)$ moves the first vertex (red) of $T_{i+1}$ to $T_{i}$. One of the original bridge-edges (dashed red) between $S_{i}$ and $S_{i+1}$ disappears and one bridge-edge (red) between $S_{i+1}$ and $S_{i+2}$ is created.
}
\label{fig:step4}
\end{figure}	

Step \texttt{(4)}: In this step, only the edges incident to vertices in $S_{i+1}$ are affected. Note that each vertex in $S_{i+1}$ loses exactly $t=\min(m-y_i-x_{i+1},|T_{i+1}|)$ edges going to $S_i$ (since $x_i+y_i \ge m$), and gains at most $t$ edges going to the right, so the balance is again non-positive (see Figure~\ref{fig:step4}).
\end{proof}

%********************** Small cases **********************

\section{Proof of Proposition~\ref{k2}}\label{small}

In this section we prove Proposition~\ref{k2}.
In each case, the 1-statement comes from the general approach shown in Section~\ref{mach} and the relevant value of the threshold/over-threshold (depending on the validity of inequality~\eqref{if}) is given in Table~\ref{table:rcr2} or Table~\ref{table:rcr3}. Thus, one only needs to prove the 0-statement, for which ad hoc arguments, similar to those used earlier in \cite{ADRRS} and \cite{ADR}, will have to be invented.

%********************** General outline **********************

\subsection{General outline}

%While proving a 0-statement, one needs to come up with a suitable graph $G_\eps$ (cf. Definition~\ref{over}).

All proofs follow essentially the same line,  similar, in fact, to that used in Section~\ref{lb} to prove Theorems~\ref{23}(b) and~\ref{new_bounds} (the right-hand side inequality). For both, $k=2$ and $k=3$, in all instances we use  the graph
 $G^{(k)}_\eps$ defined in Section~\ref{lb}. Let us recall that
$G^{(k)}_\eps$ is a balanced, $(k+1)$-partite, complete $n$-vertex graph on $U_1\cup\cdots\cup U_{k+1}$, with additional $k+1$ unbalanced bipartite complete graphs inserted within each of the partition sets $U_i$, between fixed sets $W_i\subset U_i$ and their complements $U_i\!\setminus\! W_i$, where $|W_i|=\lceil\eps n\rceil$, $i\in[k+1]$.  %$W=W_1\cup \cdots\cup W_{k+1}$. %Clearly, $\delta(G^{(k)}_\eps)\ge\left(\tfrac k{k+1}+\eps\right)n$.

Given $m$, $p$, and $k\in\{2,3\}$, let $\eps=\eps(m,k)$ be such that
\begin{equation*}%\label{eq:eps_def}
    \eps\le \frac1{(k+3) (m+1)^2}.
\end{equation*}
Next, set $H=G^{(k)}_\eps\cup G(n,p)$ and let $s$ be the smallest integer such that $\Prob(X_s>\eps n)=o(1)$, where $X_s$ is the number of copies of $K_s$ in $G(n,p)$. The goal is to show that \newline $\Prob(H\in\ham^m_n,\;X_s\le\eps n)=o(1)$.

Suppose that $H$ contains  the $m$-th power of a Hamilton cycle and fix one such cycle~$C$. Further, assume that $G(n,p)$ contains at most $\eps n$ copies of $K_s$.
Next, order the vertices of $H$ according to the cyclic order determined by $C$ and remove from $H$ all vertices belonging to $W=W_1\cup \cdots\cup W_{k+1}$, as well as one vertex from each copy of $K_s$ in $G(n,p)$. Altogether, we have removed at most $(k+2)\eps n$ vertices. What remains is a collection of vertex disjoint $m$-paths and the longest of them has at least $(n-\eps n)/((k+2)\eps n)>1/((k+3)\eps)$ vertices. We truncate it so that it has precisely
\[L:=\left\lceil \frac1{(k+3)\eps}\right\rceil\]
vertices and call it $P$. Note that by the choice of $\eps$ we have $L\ge(m+1)^2$. Finally, define $k+1$ vertex-disjoint subgraphs $P_t:=P[V_t]$, where $V_t= V(P)\cap U_t$, $t\in[k+1]$. Observe that each $P_t\subset G(n,p)$ and thus $P_t\not\supset K_s$.
Set
\[F:=P_1\cup \cdots\cup P_{k+1}\]
and $M := |E(F)|$. The density of $F$ is all what matters.
Indeed, in each instance of the proof we will show that the ratio $M/L$ is large enough, so that,  by Fact~\ref{Markov}(a),
$$\Prob(H\in\ham^m_n,\;X_s\le\eps n)\le\Prob(G(n,p)\;\mbox{contains an $(L,M)$-subgraph})=o(1)$$
and the 0-statement holds (cf. Definition~\ref{over}).

%********************** Preparations **********************

\subsection{Edge suppliers}\label{prep}

Here we gather a few simple facts that can be used to provide edges in the graph $F$ defined in the previous section.

Let us begin by noticing that every $m+1$ consecutive vertices of $P$ form a clique and that an $h$-path on $b$ vertices contains precisely $hb-\binom{h+1}2$ edges.
Slightly less trivial is the following fact whose proof is deferred  to the Appendix.

\begin{restatable}[]{fact}{hpathh}
\label{hpath}
Let $m\ge k(s-1)+h$. For each $t\in[k+1]$, if $|V_t|\ge h+1$
then  $P_t$ contains a~spanning $h$-path.
\end{restatable}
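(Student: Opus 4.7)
The plan is to enumerate the vertices of $V_t$ as $w_1,\dots,w_b$ in the order in which they appear along $P$ (so $b=|V_t|\ge h+1$) and to verify that this very ordering exhibits a spanning $h$-path in $P_t$. Since two vertices of the $m$-path $P$ are adjacent exactly when their positions along $P$ differ by at most $m$, the whole matter boils down to the single inequality
\[
\mathrm{pos}(w_{i+h}) - \mathrm{pos}(w_i) \le m \qquad \text{for every } 1\le i\le b-h;
\]
the remaining edges $\{w_i,w_{i+j}\}$ with $j<h$ then follow by monotonicity of the position distance.

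To prove this gap bound I would argue by contradiction. Suppose $\mathrm{pos}(w_{i+h})-\mathrm{pos}(w_i)\ge m+1$ for some $i$, and let $W$ be the window of $m+1$ consecutive vertices of $P$ starting at $w_i$. Two elementary observations do all the work: (a) $W$ induces a clique $K_{m+1}$ in $P$, since any $m+1$ consecutive vertices of an $m$-path do; and (b) because $w_{i+h}$ lies strictly past the right endpoint of $W$, the $V_t$-vertices inside $W$ are contained in $\{w_i,w_{i+1},\dots,w_{i+h-1}\}$, so $|W\cap V_t|\le h$. Consequently $W$ contains at least $m+1-h$ vertices from the other $k$ color classes.

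The decisive step is a pigeonhole combined with the hypothesis $m\ge k(s-1)+h$, which rearranges to $m+1-h\ge k(s-1)+1$. Some $V_{t'}$ with $t'\ne t$ must then meet $W$ in at least $\lceil(m+1-h)/k\rceil\ge s$ vertices; these vertices inherit the clique structure of $W$ and therefore form a copy of $K_s$ inside $P_{t'}$, contradicting the setup assumption that each $P_{t'}$ is $K_s$-free.

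I do not anticipate a real obstacle: the constant $k(s-1)+h$ is calibrated precisely so that the pigeonhole delivers the forbidden clique, and every other ingredient (the clique property of $m+1$ consecutive vertices of an $m$-path, the fact that $w_{i+h}$ escapes the window) is essentially automatic. The only items worth double-checking are that the window $W$ really fits inside $P$ (which is immediate once one assumes $\mathrm{pos}(w_{i+h})-\mathrm{pos}(w_i)\ge m+1$, since $\mathrm{pos}(w_{i+h})\le L$) and that the bound $|W\cap V_t|\le h$ is sharp rather than $h+1$ (which uses exactly $w_{i+h}\notin W$).
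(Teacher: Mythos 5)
Your proof is correct, and it takes a genuinely different route from the paper's. The paper also argues by symmetry on a block $u_1,\dots,u_{h+1}$ of $h+1$ consecutive $V_t$-vertices and aims to bound the gap $\mathrm{pos}(u_{h+1})-\mathrm{pos}(u_1)$, but it does so via an extremal argument: among all $s$-tuples of same-color vertices strictly between $u_1$ and $u_{h+1}$, it picks one of minimum spread, notes that to avoid $K_s$ this spread must exceed $m$, and then applies pigeonhole inside that sub-window to produce an even tighter $s$-tuple, contradicting minimality. The need for this iteration stems from the fact that the window the paper initially considers (everything between $u_1$ and $u_{h+1}$) may be much wider than $m+1$, so the $s$ monochromatic vertices it first finds need not form a clique. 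Your observation — that one should take the window to be \emph{exactly} $m+1$ consecutive vertices of $P$ anchored at $w_i$ — eliminates that complication at the source: the window is a clique by construction, so the first pigeonhole already yields a forbidden $K_s$, with no minimality or recursion needed. You also correctly isolate the two things worth checking (that the window fits inside $\overrightarrow{V(P)}$, and that $|W\cap V_t|\le h$ rather than $h+1$), both of which follow immediately from $w_{i+h}$ lying past the right end of $W$. In short, your proof is a clean one-pass version of the same pigeonhole idea, and it relies on exactly the same standing hypothesis as the paper (each $P_{t'}$ is $K_s$-free, which comes from the preprocessing step that deletes one vertex from every copy of $K_s$ in $G(n,p)$).
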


By Fact~\ref{hpath} with $h=m-k(s-1)$,
we infer that each component $P_t$ of $F$ contains a~spanning $h$-path and thus
\begin{equation}\label{eq:M_h_estimate}
M\ge hL-(k+1){h+1\choose 2}.
\end{equation}
In some cases, this is enough to apply Fact~\ref{Markov}(a) and finish the proof. If not, one must estimate the number of other edges in $F$.

For $t\in[k+1]$, a \emph{$V_t$-pair} is a pair of vertices  belonging to $V_t$.
 A $V_t$-pair $\{u,v\}$ is called \emph{$q$-far} if $u$ and $v$ are separated in the ordering $\overrightarrow{V(P)}$ by exactly $q-1$ vertices of~$V_t$. If, moreover, $\{u,v\}\in E(F)$, we call it a~\emph{$q$-far edge} (of $F$). For example, for $\overrightarrow{V(P)}=[10]$ and $V_2=\{2,3,5,7,9\}$, the pair of vertices $\{2,7\}$ is a 3-far $V_2$-pair, while, if $m\ge5$, it is also a 3-far edge.

Note that an $h$-path in $P_t$ contains only $i$-far edges for $1\le i\le h$.
We are about to formulate two results which may provide $i$-far edges of $F$ for $i>h$. For $i\ge1$, let $x_i$ denote the number of $i$-far edges of $F$.

A potential source of such edges lies in segments of $m+1$ consecutive vertices of $P$ which, as was mentioned above, span cliques in~$P$.   Our first observation, whose proof is left to the reader, follows easily by double counting.

\begin{obs}\label{ob} If every $(m+1)$-segment of $\overrightarrow{V(P)}$ contains at least $z$ edges of $F$ that are $i$-far, then
\begin{equation}\label{eq:no_of_i_far_edges}
    x_i \geq \frac{z(L-m)}{m+1-i}.
\end{equation}

\end{obs}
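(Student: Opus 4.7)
The plan is a straightforward double-counting argument on pairs $(S,e)$, where $S$ ranges over the $(m+1)$-segments of $\overrightarrow{V(P)}$ and $e$ ranges over the $i$-far edges of $F$ whose both endpoints lie in $S$. First I would note that there are exactly $L-m$ such segments, one for each starting position $1,2,\dots,L-m$ in $\overrightarrow{V(P)}$. By hypothesis each segment contributes at least $z$ pairs, so the total number of admissible pairs $(S,e)$ is at least $z(L-m)$.

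For the matching upper bound, I would fix an $i$-far edge $e=\{u,v\}$ of $F$, say with $u,v\in V_t$ and with positions $p_u<p_v$ in $\overrightarrow{V(P)}$, and count how many $(m+1)$-segments contain both endpoints. Because $e$ is $i$-far, there are exactly $i-1$ vertices of $V_t$ strictly between $u$ and $v$ in $\overrightarrow{V(P)}$, so $p_v-p_u\ge i$. An $(m+1)$-segment $\{p,p+1,\ldots,p+m\}$ contains both endpoints precisely when $p_v-m\le p\le p_u$, giving at most $m+1-(p_v-p_u)\le m+1-i$ admissible starting positions. Truncation at the two ends of $P$ can only decrease this count, so the bound $m+1-i$ is never violated.

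Combining the two estimates gives $z(L-m)\le x_i(m+1-i)$, which after dividing by $m+1-i>0$ (a harmless assumption since the case $i=m+1$ is vacuous) yields the claimed inequality. I do not anticipate any real obstacle; the only point needing mild care is the boundary behavior at the ends of $P$, and there the number of containing segments can only drop below $m+1-i$, preserving the direction of the inequality.
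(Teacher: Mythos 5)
Your proof is correct and matches the paper's intent: the paper explicitly leaves the proof to the reader, calling it an easy double count, and your argument is precisely that double count over pairs $(S,e)$ of $(m+1)$-segments and $i$-far edges. The bound of $L-m$ segments, the bound of at most $m+1-i$ segments containing a given $i$-far edge (using $p_v-p_u\ge i$), and the dismissal of the vacuous case $i=m+1$ are all handled correctly.
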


If this is not enough, we still have one more card in the sleeve (with the proof provided in Appendix).

\begin{restatable}[]{obs}{obtwo}
\label{ob2}
Let $m= k(s-1)+h$.
Then, for all $h<i,j\le s+h-i$,
\begin{equation*}%\label{eq:no_of_i_and_j_far_edges}
L-x_i\le(k+1)i+\frac{i}{s+h-i-j}\cdot x_j.
\end{equation*}
In particular,
\begin{equation}\label{eq:no_of_h+1_and_j_far_edges}
L-x_{h+1}\le(k+1)(h+1)+\frac{h+1}{s-1-j}\cdot x_j.
\end{equation}
\end{restatable}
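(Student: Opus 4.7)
The plan is to establish Observation~\ref{ob2} by a double-counting argument that exploits the $K_s$-freeness of each $P_t$ (enforced in the general outline of Section~\ref{prep}), which, via the decomposition $m = k(s-1)+h$, is equivalent to saying that every $(m+1)$-segment of $\overrightarrow{V(P)}$ contains at most $s-1$ vertices of each color class.

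First I would reduce the claim to bounding the number $y_i := L - (k+1)i - x_i$ of $i$-far non-edges. Since the total number of $i$-far pairs is $\sum_t \max(|V_t|-i,0) \leq L-(k+1)i$ (with equality when each $|V_t|\geq i$, and the deficit only strengthening the bound), the inequality in the statement is equivalent to $y_i \leq \frac{i}{s+h-i-j}\,x_j$.

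Next, for each vertex $v = v_t^{(\alpha)}$ at position $p$, I would examine the $(m+1)$-window $W_v := [p,p+m]$, writing $c_{t'}(W_v)$ for the number of color-$t'$ vertices in $W_v$. Two structural facts control the setup: (i) by the $K_s$-free hypothesis, $c_{t'}(W_v)\leq s-1$ for every $t'$, while (ii) $\sum_{t'}c_{t'}(W_v)=m+1=k(s-1)+h+1$. Crucially, if $v$ is an $i$-far non-edge then $v^{(i)}$ lies strictly beyond $W_v$, forcing the strengthened same-color bound $c_t(W_v)\leq i$, whereas the other colors still only satisfy $c_{t'}(W_v)\leq s-1$. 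The number of $j$-far edges fully contained in $W_v$ equals $\sum_{t'}\max\bigl(0,c_{t'}(W_v)-j\bigr)$; combining $\max(0,x)\geq x$ with the two constraints above yields a per-window lower bound linear in $s,h,i,j,k$.

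I would then sum over all $i$-far non-edges $v$ and double count the pairs $(v,e)$ where $e$ is a $j$-far edge contained in $W_v$. The upper bound is that each $j$-far edge $e$ at $P$-distance $D_e\geq j$ sits in at most $m+1-D_e\leq m+1-j$ windows $W_v$ in total, but only those $v$ that are $i$-far non-edges count. Matching the two bounds should give an inequality of the form $y_i\cdot\alpha\leq x_j\cdot\beta$ which, after simplification using $m+1=k(s-1)+h+1$, collapses to $y_i(s+h-i-j)\leq i\,x_j$. The ``in particular'' statement is then immediate by specializing to $i=h+1$.

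The main obstacle is obtaining the sharpness of the ratio: the crude window-based double count delivers only the weaker ratio $\frac{m+1-j}{m+1-(k+1)j}$, whereas the target is $\frac{i}{s+h-i-j}$. Closing this gap appears to require a more refined charging scheme that genuinely uses the strengthened bound $c_t(W_v)\leq i$ — for instance by restricting attention to $j$-far edges of colors $t'\neq t$ and arranging each such edge to receive charge from at most $i$ distinct non-edges (rather than the naive $m+1-j$) via a careful alignment with the positions of the same-color successors $v,v^{(1)},\dots,v^{(r)}$ inside $W_v$. Constructing such a charging scheme and verifying its multiplicity is precisely $i$ is, in my view, the most delicate technical step.
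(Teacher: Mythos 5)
Your proposal correctly identifies the shape of the argument (double-count $j$-far edges against $i$-far non-edges, exploit $K_s$-freeness as a per-window color bound of $s-1$, aim for a multiplicity bound of $i$), but you openly acknowledge that your window-based charging gives the weaker ratio $\tfrac{m+1-j}{m+1-(k+1)j}$ and that you have not constructed the refined scheme that delivers $\tfrac{i}{s+h-i-j}$. This is a genuine gap, and the paper's proof closes it with two concrete changes to the bookkeeping that your sketch misses.

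First, the paper anchors the count not at single vertices and $(m+1)$-windows, but at $i$-far non-edge \emph{pairs} $u<v$ of a fixed color, say $V_1$. For such a pair the $P$-distance is $\ge m+1$, and one may take it to be exactly $m+1$ (i.e., $m$ intermediate vertices) as a lower bound. Of those $m$ intermediate vertices, exactly $i-1$ lie in $V_1$; the remaining $m-(i-1)$ are split among the other $k$ colors, at most $s-1$ per color by $K_s$-freeness, hence each other color has \emph{at least} $m-(i-1)-(k-1)(s-1)=s+h-i$ vertices strictly between $u$ and $v$. These $s+h-i$ vertices are pairwise $P$-adjacent (they all sit in a window of $m$ positions), so each other color contributes at least $s+h-i-j$ many $j$-far edges to the interior of the pair. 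Summing over the $k$ other colors gives the per-pair lower bound $k(s+h-i-j)$, which is the exact constant you were trying to approximate with your window estimate.

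Second, the multiplicity bound of $i$ is proved by a direct inspection of the bipartite incidence graph between $i$-far non-edge $V_1$-pairs and $j$-far edges of other colors: if a $j$-far edge $(w,z)$ (with $w<z$) is contained in several non-edge pairs $(u_1,v_1),\ldots,(u_p,v_p)$, then all $u_a<w$ and all $v_a>z$, and since each $(u_a,v_a)$ is $i$-far, there are exactly $i-1$ vertices of $V_1$ between $u_1$ and $v_1$; these include $u_2,\ldots,u_p$, forcing $p\le i$. This is precisely the ``careful alignment with same-color successors'' you gesture at, but it follows at once from the $i$-far definition once the objects being charged are the non-edge \emph{pairs} rather than the window-anchoring vertices. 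Combining $|\mathcal P_1|\cdot k(s+h-i-j)\le |\mathcal E_{\ne 1}|\cdot i$ with the identity $|\mathcal P_1|=|V_1|-i-x_i^{(1)}$, and then summing the resulting inequality over all $k+1$ choices of the anchored color (which produces the factor $k$ that cancels the $k$ in the denominator), yields $L-(k+1)i-x_i\le\frac{i}{s+h-i-j}x_j$, as required. You should also note that your reduction $\sum_t\max(|V_t|-i,0)\le L-(k+1)i$ has the inequality in the wrong direction for the purpose of lower-bounding the number of non-edges; the pair-anchored version sidesteps this because $|V_t|-i-x_i^{(t)}$ is exactly the number of $i$-far non-edge $V_t$-pairs when $|V_t|\ge i$, and trivially nonpositive otherwise.
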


\subsection{Details}

We examine each case separately, first taking care of the ordinary thresholds, followed by the over-thresholds. Within each group, we order the values of $m$ according to the increasing complexity of the proofs.

\medskip

{\bf I. Ordinary thresholds for $k=2$}

\medskip

In each case, to claim that $\eta$ is the desired $(2,m)$-Dirac exponent, one needs to show that $M/L > 1/\eta$. Then, with $p=O(n^{-\eta})$,   apply Fact~\ref{Markov}(a).

\medskip
	
$\mathbf{m=11}$:  $\eta_{2,11}= \frac25$

\medskip

Here, the edges of the spanning $h$-paths are enough. By Fact~\ref{Markov}(c) with $s=5$, choose $c=c(\eps)$ so that with $p\le cn^{-2/5}$, a.a.s.~$X_5\le \eps n$.  Since $h=m-2(s-1)=3$, using~\eqref{eq:M_h_estimate} we have $M\ge 3L-18>\frac52L$ (recall that $L\ge(m+1)^2$).

\medskip
	
$\mathbf{m=16}$: $\eta_{2,16}= \frac27$

\medskip
The proof is analogous to the previous one. With $s=7$ and $h=4$, we get $M\ge 4L-30>\frac72L$.

\medskip

$\mathbf{m=13}$: $\eta_{2,13}= \frac13$

\medskip

In this case, we need the edges of the spanning $h$-paths together with some $(h+1)$-far edges.
By Fact~\ref{Markov}(c) with $s=6$, choose $c = c(\eps)$ so that with $p\leq cn^{-1/3}$, a.a.s.~$X_6 \leq \eps n$. Since $h = 3$, using~\eqref{eq:M_h_estimate} we have at least $3L - 18$ edges in the spanning $h$-paths.
Consider any 14-element segment of $F$ and note that since there is no $K_6$, each such segment is split with respect to the number of vertices among the three sets $V_1,V_2,V_3$ essentially in only one way (ignoring permutations), that is, with composition $14=5+5+4$. As such, it contains two 4-far edges of $F$. Then, by~\eqref{eq:no_of_i_far_edges}, we have $x_4 \geq \frac{L-13}{5}$ and, altogether, we get the bound $M\ge 3L-18+\frac{L-13}{5}>3L$.

\medskip

$\mathbf{m=18}$: $\eta_{2,18}=\frac14$

\medskip

We use the same approach as above. It is enough to take $s=8$ and $h=4$, which gives us at least $4L-30$ edges in the spanning $4$-paths. Next, note that the only possible decompositions of an 19-element segment are $19=7+7+5=7+6+6$. Thus, each such segment contains at least four $5$-far edges and we have $x_5 \geq \frac{2(L-m)}{7}$. Therefore, $M \geq 4L-30+\frac{2(L-m)}{7} > 4L$.

\medskip
	
$\mathbf{m=20}$: $\eta_{2,20}=\frac29$

\medskip

Here, apart from the edges of the spanning $h$-paths, we need some $(h+1)$-far and $(h+2)$-far edges. Take $s=9$ and $h=4$, which again yield at least $4L-30$ edges in the spanning $4$-paths. This time we have three possible compositions of $21$, that is $21=8+8+5=8+7+6=7+7+7$, and each of them brings exactly six 5-far edges and at least three 6-far edges of $F$. Therefore, by~\eqref{eq:no_of_i_far_edges}, $x_5 \geq \frac{3(L-20)}{8}$ and $x_6 \geq \frac{L-20}{5}$, and we get the bound $M \geq 4L-30 + \frac{3(L-20)}{8} + \frac{L-20}{5} > \frac92L$.

\medskip

{\bf II. Over-thresholds for $k=2$}

\medskip

We now move on to over-thresholds. To use Fact~\ref{Markov}(a), we will again show that the ratio $M/L$ is large enough. However, owing to the $-\mu$ term in the exponent in the definition of the over-threshold, the bounds which we use can be relaxed. In particular, to claim that $\bar\eta$ is the correct $(2,m)$-Dirac over-exponent, it is enough to show that
%\begin{equation*}\label{eq:M_L_ratio_over_exponent}
   $ M \geq L/\bar{\eta} - b$
%\end{equation*}
for some positive constant $b$. Indeed, given $\mu$ and $b$, take $p(n) \leq n^{-\bar{\eta}-\mu}$ and choose $\eps>0$, so that
 $5b\eps <\frac1{\bar\eta}-\frac1{\bar\eta+\mu}$.     %\mu/(\bar{\eta}(\bar{\eta}+\mu))$.
 Then,
 $$\frac ML\ge \frac1{\bar\eta}-\frac bL\ge \frac1{\bar\eta}-5b\eps>\frac1{\bar\eta+\mu},$$
 and, by Fact~\ref{Markov}(a), we get the 0-statement.

Another difference is that instead of Fact~\ref{Markov}(c), it is now sufficient to use Fact~\ref{Markov}(b) to determine $s$.

\medskip

$\mathbf{m=8}$: $\bar\eta_{2,8}= \frac12$

\medskip

In this case, we will only use the edges of the spanning $h$-paths. With $s=4$ and $h=2$, by~\eqref{eq:M_h_estimate} we have
$M\ge 2L-9$.

\medskip
	
$\mathbf{m=10}$: $\bar \eta_{2,10}=\frac49$

\medskip

Here we need the spanning $h$-paths together with $(h+1)$-far edges. Take $s=5$ and $h=2$. This brings $2L - 9$ edges in the spanning $2$-paths. Since the only possible decomposition of an 11-element segment is $11 = 4+4+3$, we get two 3-far edges in each such segment, and therefore, by~\eqref{eq:no_of_i_far_edges}, $x_3 \geq \frac{L-10}{4}$. Altogether, we get $M \geq 2L - 9 + \frac{L-10}{4} = \frac94L - 11\frac12$.

\medskip

$\mathbf{m=12}$: $\bar \eta_{2,12}=\frac5{13}$

\medskip

 This time we will need a more careful analysis, where instead of estimating the numbers of $(h+1)$-far and $(h+2)$-far edges separately, we will estimate their total number together.

Take $s=6$ and $h=2$. This gives $2L-9$ edges in the spanning $2$-paths. There are two ways to decompose a $13$-element segment: $13=5+5+3=5+4+4$. In each case we get four $3$-far edges and at least one $4$-far edge, therefore $x_3 \geq \frac{2(L-12)}{5}$ and $x_4 \geq \frac{L-12}{9}$. Together, $M \geq 2L - 9 + \frac{2(L-12)}{5} + \frac{L-12}{9} = 2\frac{23}{45}L - 15\frac{2}{15}$, not good enough, since $2\frac{23}{45} < \frac{13}{5}$.

Fortunately, there is still room for improvement, based on the optimization technique from \cite[proof of Thm. 1.5,\; $m=9$]{ADR}.
Our goal is to minimize the value $x_3+x_4$ by relating $x_3$ with $x_4$. By \eqref{eq:no_of_h+1_and_j_far_edges}, we get the bound $L-x_3\le 9+3x_4$.
Thus, we arrive at the following system of inequalities
\[
\begin{cases}
    x_3 \geq \frac25 L - \frac{24}{5} \\
    x_3 \geq L - 3x_4 - 9.
\end{cases}
\]
Multiplying the first inequality by $2/3$ and the second one by $1/3$, and then adding them together, yields
\[
x_3+x_4 \ge \frac{3}{5} L - 6\frac15.
\]
Thus, we get a sufficiently good lower bound
\[M \geq 2L-9+\frac35L-6\frac{1}{5} = \frac{13}{5}L - 15\frac{1}{5}.\]

\medskip
	
    $\mathbf{m=15}$: $\bar \eta_{2,15}= \frac27$

\medskip

We use a similar optimization as in the previous case. Take $s=7$ and $h=3$, which yields $3L-18$ edges in the spanning $3$-paths. Next, there are two ways to decompose a $16$-element segment: $16=6+5+5=6+6+4$. For each one, we get four $4$-far edges and thus $x_4 \geq \frac{L-15}{3}$.

Further, by \eqref{eq:no_of_h+1_and_j_far_edges},  $L-x_4\le 12+4x_5$, so  the following system of inequalities emerges:
\[
\begin{cases}
    x_4 \geq \frac{1}{3}L - 5 \\
    x_4 \geq L - 4x_5 - 12.
\end{cases}
\]
Multiplying the first one by $3/4$ and the second one by $1/4$, we get
\[ x_4 + x_5 \geq \frac12L - 6\frac34\]
and, consequently,
\[ M \geq 3L - 18 + \frac12L - 6\frac34 = \frac72L - 24\frac34. \]

\medskip

$\mathbf{m=17}$: $\bar \eta_{2,17}=\frac7{27}$

\medskip

Once again we use the same optimization technique, but this time involving $(h+1)$-far, $(h+2)$-far and $(h+3)$-far edges. Take $s=8$ and $h=3$ to get $3L-18$ edges in the spanning $3$-paths. Since each $18$-element segment can be split in three ways: $18=7+7+4=7+6+5=6+6+6$, in each case we get six $4$-far edges, which yields the bound $x_4 \geq \frac{3(L-17)}{7}$. As for the $5$-far and $6$-far edges, using again \eqref{eq:no_of_h+1_and_j_far_edges}, we get $L - x_4 \leq 12+2x_5$ and $L - x_4 \leq 12+4x_6$. This leads to the following system of inequalities
\[
\begin{cases}
    x_4 \geq \frac37 L - \frac{17}7 \\
    x_4 \geq L - 2x_5 - 12\\
    x_4 \geq L - 4x_6 -12.
\end{cases}
\]
Multiplying the above inequalities by $1/4$, $1/2$ and $1/4$, respectively, and then adding them together gives
\[
x_4+x_5+x_6 \ge  \frac{6}{7} L - 9\frac{17}{28}.
\]

Therefore,
\[ M \geq 3L-18 + \frac{6}{7} L - 9\frac{17}{28} = \frac{27}7L- 9\frac{17}{28}.\]

\medskip

{\bf III. Ordinary thresholds for $k=3$}

\medskip

Similarly to Case {\bf I.}, to claim that $\eta$ is the desired $(3,m)$-Dirac exponent, one suffices to show that $M/L > 1/\eta$.

\medskip

    $\mathbf{m=15}$: $\eta_{3,15}=\frac25$

\medskip

It is enough to take the spanning $h$-paths. With $s=5$ and $h=3$, by~\eqref{eq:M_h_estimate} we get $M\geq 3L - 24 > \frac52L$.

\medskip

    $\mathbf{m=18}$: $\eta_{3,18}=\frac13$

\medskip

 With $s=6$ and $h=3$, by~\eqref{eq:M_h_estimate}, we get $3L-24$ edges in spanning 3-paths. Moreover, since $19=5+5+5+4$, every 19-vertex segment contains at least three 4-far edges and, by Observation~\eqref{eq:no_of_i_far_edges} we get $x_4\ge\frac{3(L-18)}{15}$. Thus, altogether, $M>3L$.

\medskip

    $\mathbf{m=19}$: $\eta_{3,19}=\frac13$

\medskip
Since we have just proved that $\eta_{3,18}=\frac13$, while $\eta_{3,20}=\frac13$ was established in~\cite{ADRRS}, the result follows by monotonicity: $\eta_{3,18}\ge\eta_{3,19}\ge\eta_{3,20}$.

\medskip

{\bf IV. Over-thresholds for $k=3$}

\medskip

Similarly to Case {\bf II.}, to claim that $\bar{\eta}$ is the correct $(3,m)$-Dirac over-exponent, it is enough to show that
\[ M \geq L/\bar{\eta}-b\]
for some positive constant $b$.

\medskip

    $\mathbf{m=11}$: $\bar \eta_{3,11}=\frac12$

\medskip

In this case the spanning $h$-paths are enough. Taking $s=4$ and $h=2$, by~\eqref{eq:M_h_estimate} we get $M\geq 2L - 12$.

\medskip

    $\mathbf{m=14}$: $\bar \eta_{3,14}=\frac49$

\medskip

Here we need the spanning $h$-paths together with $(h+1)$-far edges. Take $s=5$ and $h=2$. This gives $2L-12$ edges in the spanning $2$-paths. Since there is only one decomposition of a $15$-element segment, namely $15=4+4+4+3$, in each such segment we get three $3$-far edges. Thus, by \eqref{eq:no_of_i_far_edges}, we have $x_3\geq \frac{L-14}{4}$. Hence, $M \geq 2L-12+\frac{L-14}{4} = \frac94L -15\frac12$

\medskip

    $\mathbf{m=17}$: $\bar \eta_{3,17}=\frac5{13}$

\medskip

We will use the optimization connecting the number of $(h+1)$- and $(h+2)$-far edges. With $s=6$ and $h=2$, one gets
$2L-12$ edges in the spanning $2$-paths.
Next, since each 18-element segment can be split either with composition $5+5+5+3$ or $5+5+4+4$, this brings six 3-far edges and, by \eqref{eq:no_of_i_far_edges}, $x_3 \geq \frac{2(L-17)}{5}$. By \eqref{eq:no_of_h+1_and_j_far_edges}, we get also the relation $L-x_3 \leq 12+3x_4$. This leads to a system of inequalities
\[
\begin{cases}
    x_3 \geq \frac{2}{5}L - \frac{17}5 \\
    x_3 \geq L - 3x_4 -12,
\end{cases}
\]
which, after multiplying the first one by $\frac23$ and the second one by $\frac13$, implies that
\[ M \geq 2L-12 + x_3 + x_4 \geq 2L-12 + \frac35L - 6\frac{4}{15} = \frac{13}5L - 18\frac{4}{15}. \]

\section{Open questions}

 $\mathbf{1}$. We believe that in Theorem~\ref{new_bounds} it is the lower bound which yields the correct value of the over-threshold.

  \begin{conj}\label{correct}
For all integers $k\ge2$, there exists a constant $m_k$ such that for all $m\ge m_k$,
$$\bar\eta_{k,m}=\frac1{f(\ell_{k,m})}.$$
\end{conj}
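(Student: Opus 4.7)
The lower bound $\bar\eta_{k,m} \ge 1/f(\ell_{k,m})$ is already established by Theorem~\ref{new_bounds}, so the plan is to close the remaining gap by strengthening the 0-statement. That is, we would aim to prove that for every $\mu > 0$ there exist $\eps > 0$ and a graph $G_\eps$ with $\delta(G_\eps) \ge \left(\tfrac{k}{k+1}+\eps\right)n$ such that for $p \le n^{-1/f(\ell_{k,m}) - \mu}$ a.a.s.\ $G_\eps \cup G(n,p) \notin \ham_n^m$. Mimicking the strategy of Section~\ref{lb} with $G_\eps = G^{(k)}_\eps$, from any putative $m$-th power of a Hamiltonian cycle we would delete $W$ and extract a long sub-$m$-path $P$ with $V(P) = V_1 \cup \dots \cup V_{k+1}$, where $V_j = V(P) \cap U_j$. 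Since each $P[V_j]$ is entirely contained in $G(n,p)$, it would suffice by Fact~\ref{Markov}(a) to establish the following combinatorial strengthening of Lemma~\ref{weak}:
\[
\sum_{j=1}^{k+1} |E(P[V_j])| \ge f(\ell_{k,m})\,|V(P)| - C_{k,m},
\]
for some constant $C_{k,m}$ depending only on $k$ and $m$. This inequality is tight up to lower-order terms, as witnessed by the periodic block pattern $V_1^\ell V_2^\ell \cdots V_{k+1}^\ell V_1^\ell \cdots$ with $\ell = \ell_{k,m}$, in which each length-$\ell$ block contributes $\binom{\ell}{2}$ internal clique edges and $\binom{r+1}{2}$ bridge edges to the next same-color block, for a total per-vertex same-color edge density of exactly $f(\ell_{k,m})$.

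The approach would be to design an extension of algorithm REWIRE operating on all $k+1$ color classes simultaneously. One writes $\overrightarrow{V(P)}$ as a sequence of maximal monochromatic blocks $B_1 B_2 \cdots B_q$ of sizes $x_1, \dots, x_q$ and colors $c_1, \dots, c_q$ (with $c_i \ne c_{i+1}$). The extended algorithm would iteratively perform $\textrm{SHIFT\_LEFT}$ and $\textrm{SHIFT\_RIGHT}$ moves to enforce a canonical structure: every block size should lie in $\{1, \dots, m\}$, and for each color $c$ any two consecutive $c$-blocks should have gap (total size of intervening blocks of other colors) satisfying appropriate lower bounds, so that a full bridge of $\binom{m+1-g}{2}$ same-color edges is realized for each gap $g$. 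After termination, for each color class the modified path would decompose into a disjoint union of braid-like structures analogous to $B(\ell,r,t)$ from Definition~\ref{bra}; summing the within-clique and bridge contributions across all $k+1$ classes and applying a \emph{discrete} (rather than continuous) Jensen-type optimization over the integer block sizes would then yield the desired per-vertex density $f(\ell_{k,m})$. The key advantage over Lemma~\ref{weak} is that, by treating all classes jointly rather than a single class of size at least $|V(P)|/(k+1)$, the natural optimization variable becomes an integer block size and the slack between $f(\lambda_{k,m})$ and $f(\ell_{k,m})$ should disappear.

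The main obstacle is the multi-color bookkeeping inside the algorithm: a single $\textrm{SHIFT}$ operation between adjacent blocks can simultaneously create edges in one color class (by moving same-colored blocks closer) while destroying edges in another. Guaranteeing that each such move causes a net non-positive change in $\sum_j |E(P[V_j])|$ would demand a substantially more delicate case analysis than the binary version of algorithm REWIRE, in particular because between two consecutive $c$-blocks there may now sit up to $k$ intermediate blocks of differing colors whose relative order can vary. A plausible remedy would be to introduce a potential function on colored block sequences --- for instance, a weighted sum of deviations from the canonical form --- that strictly decreases under every legal operation while dominating $\sum_j |E(P[V_j])|$ from above, thereby forcing termination into the canonical form without net edge gain. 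Designing such a potential and verifying its monotonicity across the enlarged collection of operation types as $k$ grows is where the bulk of the work would lie.
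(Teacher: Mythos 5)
This statement is Conjecture~\ref{correct}, which the paper explicitly leaves open, so there is no paper proof against which to check your argument.\ Your proposal is a plan of attack, not a proof, and you say so yourself: the central step --- showing that every shift in a multi-color REWIRE causes a net non-positive change in $\sum_{j=1}^{k+1}|E(P[V_j])|$, or, equivalently, designing a potential that decreases monotonically toward the canonical block form --- is left unconstructed.\ That step is precisely what the paper isolates as Conjecture~\ref{con:pathedges}, and the discussion after it even records the analogous blueprint (modify REWIRE so that the output partition satisfies $y_i\le kx_i$, after which the discrete minimization $\sum_i x_i f(x_i)\ge f(\ell_{k,m})\sum_i x_i$ finishes).\ So you have correctly located the bottleneck, verified the tightness against the periodic block pattern $V_1^\ell V_2^\ell\cdots V_{k+1}^\ell\cdots$, and correctly observed that the missing ingredient is a discrete (integer-block) replacement for the continuous Jensen step in Lemma~\ref{weak}; but you have restated the open problem rather than solved it.

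Two smaller points worth flagging. First, your target inequality is the sum form $\sum_j|E(P[V_j])|\ge f(\ell_{k,m})|V(P)|-C_{k,m}$, whereas the paper's Conjecture~\ref{con:pathedges} is a single-class form; the sum form is indeed what the 0-statement needs via Fact~\ref{Markov}(a), and the single-class form implies it only when the promised class is large, so you should make sure whichever version you attack is the one that actually feeds into the probabilistic step.\ Second, the enforcement of ``appropriate lower bounds on gaps'' in your canonical form is exactly where the single-color argument (Lemma~\ref{perform}, condition (iii) and (iv)) gives $x_i+y_i\ge m$ and $y_i+x_{i+1}\ge m$; the multi-color version must simultaneously enforce such conditions per color while the same vertices serve as ``gap'' for $k$ other colors, which is why a single local shift can improve one color's braid while degrading another's.\ Without a concrete potential that provably decreases under every admissible move, the proposal does not compile into a proof; it identifies the right target and the right kind of algorithm, but the monotonicity lemma --- the heart of the matter --- is missing.
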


 One way to confirm it would be to  strengthen the bound in Lemma~\ref{weak} accordingly.

\begin{conj}\label{con:pathedges}
Fix $k\ge 2$ and $m\geq k+1$. There exists a positive constant $c=c(m)$ such that if $P$ is an $m$-path with the vertex set partition $V(P)=V_1\cup\dots\cup V_{k+1}$, then there exists $j\in[k+1]$ such that
\[
|E(P[V_{j}])|\geq f(\ell_{k,m})|V_{j}| - c(m).
\]
\end{conj}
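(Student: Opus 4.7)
The plan is to upgrade Lemma~\ref{weak} so that $f(\lambda_{k,m})$ is replaced by $f(\ell_{k,m})$; the conjecture then follows by choosing $j \in [k+1]$ with $|V_j| \ge |V(P)|/(k+1)$ (always possible by pigeonhole). I would apply algorithm REWIRE to the pair $(P, V_j)$ to obtain a rewired path $P'$ with $|E(P'[V_j])| \le |E(P[V_j])|$ together with an integer segment decomposition, where $x_i = |S_i|$ and $y_i = |T_i|$ satisfy the bounds of Lemma~\ref{perform}. Following the setup from the proof of Lemma~\ref{weak} (and treating the trailing segment $T_q$ exactly as there), one arrives at
\[
|E(P[V_j])| \ge \sum_i \binom{x_i}{2} + \sum_i \binom{m+1-y_i}{2} - m(m+1).
\]

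The new idea is to avoid the two continuous Jensen steps (which only deliver $f(\lambda_{k,m})$) and instead exploit the integrality of the $x_i$. By the very definition of $\ell_{k,m}$ as the minimizer of $f$ over positive integers, one has the per-segment bound
\[
\binom{x_i}{2} + \binom{m+1-kx_i}{2} = x_i f(x_i) \ge x_i f(\ell_{k,m}),
\]
which, summed over $i$, contributes $f(\ell_{k,m})|V_j|$. Introducing $\delta_i := y_i - k x_i$ (an integer with $\sum_i \delta_i = |V(P)| - (k+1)|V_j| \le 0$) and invoking the elementary identity $\binom{z-\delta}{2} = \binom{z}{2} - \delta z + \binom{\delta+1}{2}$, valid for integer $z$ and $\delta$, with $z = m+1-kx_i$ to rewrite the second sum, one arrives at
\[
|E(P[V_j])| \ge f(\ell_{k,m})|V_j| - m(m+1) + R,
\qquad
R := -\sum_i \delta_i(m+1-kx_i) + \sum_i \binom{\delta_i+1}{2}.
\]

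The main obstacle is to show $R \ge -c'(m)$ for a constant depending only on $k$ and $m$ (and not on $|V(P)|$). Observe that $\binom{\delta_i+1}{2} = \delta_i(\delta_i+1)/2 \ge 0$ for every integer $\delta_i$, so only the first sum of $R$ can be negative. In the typical regime $x_i \le (m+1)/k$ (which holds around $x_i = \ell_{k,m}$, since $\lambda_{k,m} < m/k$) the coefficient $m+1-kx_i \ge 0$ combined with $\sum_i \delta_i \le 0$ makes $-\sum_i \delta_i(m+1-kx_i)$ non-negative on average. The hard case is when many $\delta_i$ are large and negative in segments where $x_i$ slightly exceeds $(m+1)/k$; here one has to argue that the quadratic reserve $\binom{\delta_i+1}{2}$ together with the per-segment constraints $m-(k+1)x_i \le \delta_i \le m-kx_i$ coming from conditions (ii) and (iii) of Lemma~\ref{perform} is enough to compensate. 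A promising route is to group segments by their $x_i$ value and invoke the strict convexity of $f$ at $\ell_{k,m}$: any deviation $x_i \ne \ell_{k,m}$ generates a quadratic excess $x_i(f(x_i) - f(\ell_{k,m}))$ which should absorb the negative contribution of that segment to $R$. Making this exchange rigorous, so that the error $c'(m)$ does not grow with the length of $P$, is the crux of the argument.
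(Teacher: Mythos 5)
This statement is an open conjecture in the paper, not a proven result, so there is no ``paper's own proof'' to compare against; what the paper offers is a one-sentence suggested line of attack (see the end of Section 6): alter REWIRE so that the output satisfies the \emph{pointwise} constraint $y_i\le kx_i$ for every $i$. Under that constraint, $m+1-y_i\ge m+1-kx_i$, so each bridge term $\binom{m+1-y_i}{2}$ already dominates $\binom{m+1-kx_i}{2}$ segment by segment, and the per-segment identity $\binom{x_i}{2}+\binom{m+1-kx_i}{2}=x_if(x_i)\ge x_if(\ell_{k,m})$ finishes the job with no residual at all. Your approach is genuinely different: you keep REWIRE unchanged, observe only the \emph{averaged} consequence $\sum_i\delta_i\le 0$ of $|V_j|\ge L/(k+1)$, and try to control the leftover $R=-\sum_i\delta_i(m+1-kx_i)+\sum_i\binom{\delta_i+1}{2}$.

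The gap you flag is real and, as stated, fatal to the argument. The term $-\sum_i\delta_i(m+1-kx_i)$ is a weighted sum, and the weights $m+1-kx_i$ can change sign as $x_i$ varies; the global constraint $\sum_i\delta_i\le0$ gives no control on a sign-weighted sum, and the number of bad segments can grow linearly with $L$. Concretely, a long run of segments with $x_i=1$ and $y_i=m-1$ (allowed by Lemma~\ref{perform}) each contribute roughly $-(m-k)^2/2$ to $R$, so $R$ alone can be $-\Theta(L)$. Your proposed rescue, harvesting the convexity excess $\sum_i x_i\bigl(f(x_i)-f(\ell_{k,m})\bigr)$, is exactly the right instinct (in the $x_i=1$ example that excess is about $+\binom{m+1-k}{2}$, which is of the same order), but your write-up discards this excess one line earlier by applying $x_if(x_i)\ge x_if(\ell_{k,m})$, and the surviving ``promising route'' paragraph never actually carries out the book-keeping that would show the excess dominates $-R$ segment by segment. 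Until that local cancellation is proved (together with handling segments where $m+1-kx_i<0$, where even the sign of the first factor flips), the argument does not close. Note also that the paper's pointwise $y_i\le kx_i$ route is itself nontrivial: it is not clear REWIRE can be modified to enforce it without increasing $|E(P'[V_j])|$, which is presumably why it remains a conjecture.
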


One possible line of attack at Conjecture~\ref{con:pathedges} could be to alter the algorithm REWIRE so that the output partition satisfied $y_i\le kx_i$ for each $i$. Then, we would have
$$\sum_{i=1}^{q'} {x_i \choose 2} + \sum_{i=1}^{q'} {m+1-y_i\choose 2}\ge\sum_{i=1}^{q'}x_if(x_i)\ge f(\ell_{k,m})L_0.$$

\medskip

 $\mathbf{2}$. It seems that using our techniques one can compute Dirac thresholds for more small instances. For $k=2$, in the smallest open case, $m=19$, a similar strategy to that in Section~\ref{small} fails. Hence, any 0-statement proof would require a more sophisticated approach, along the proof for ${m=12, 15, 17}$, but probably much harder. Also, one could try to determine the ordinary thresholds for $m\in\{22,27,29,31,33,44,46\}$, that is, for all values of $m$ which were singled out as the remaining likely cases in Proposition~\ref{rcr}.

 For $k=3$, within our reach are ordinary thresholds for $m=22,23,26,29$ (just based on Fact~\ref{hpath}) and $m=25,28,32,39$ (Fact~\ref{hpath} plus Observation~\ref{ob}). More ambitious seems to be  the task of determining the over-thresholds for $m=21,24, 27,30$  and the ordinary threshold for $m=31$.

 We have not tried larger values of $k$, but it  looks plausible that for some initial values of $m$ our approach should work for them as well.

\medskip

 $\mathbf{3}$. Another challenging problem is, for a given $m$ and $k$, to determine the thresholds for $\ham_n^m$ in the case where $\eps=0$, that is, when the deterministic graph $G$ satisfies only the assumption $\delta(G) \geq \frac{k}{k+1}n$, or, more generally, when $\delta(G)\ge\alpha n$ for any fixed $\alpha>0$. So far, this problem has been solved only for $m=1$ in \cite{BFM2003} and for $m=2$ in \cite{BPSS2022}.

	\begin{bibdiv}
		\begin{biblist}
			
			\bib{ADR}{article}{
				author={Antoniuk, S.},
				author={Dudek, A.},
				author={Ruci\'nski, A.},
				title={Powers of Hamiltonian cycles in randomly augmented Dirac graphs -- the complete collection},
				journal={J. of Graph Theory},
                volume={104},
				number={4},
				date={2023},
				pages={811--835},
			}
			
			\bib{ADRRS}{article}{
				author={Antoniuk, S.},
				author={Dudek, A.},
				author={Reiher, Chr.},
				author={Ruci\'nski, A.},
				author={Schacht, M.},
				title={High powers of Hamiltonian cycles in randomly augmented graphs},
				journal={J. of Graph Theory},
				volume={98},
				number={2},
				date={2021},
				pages={255--284},
			}

                \bib{AR}{article}{
                author={Antoniuk, S.},
                author={Reiher, Chr.},
                title={Sets and partitions minimising small differences},
                journal={arXiv:2410.23868},
                date={2024},
                }
			
			\bib{BFM2003}{article}{
				author={Bohman, Tom},
				author={Frieze, Alan},
				author={Martin, Ryan},
				title={How many random edges make a dense graph Hamiltonian?},
				journal={Random Structures Algorithms},
				volume={22},
				date={2003},
				number={1},
				pages={33--42},
				issn={1042-9832},
				review={\MR{1943857}},
			}
	
\bib{BPSS2022}{article}{
			author={B\"ottcher, J.},
			author={Parczyk, O.},
			author={Sgueglia, A.},
			author={Skokan, J.},
			title={The square of a Hamilton cycle in randomly perturbed graphs},
			journal={Random Structures Algorithms},
				volume={65},
				date={2024},
				number={2},
				pages={342--386},
				%issn={},		
                %review={},
                }
		
			\bib{DRRS}{article}{
				author={Dudek, A.},
				author={Reiher, Chr.},
				author={Ruci\'nski, A.},
				author={Schacht, M.},
				title={Powers of Hamiltonian cycles in randomly augmented graphs},
				journal={Random Structures Algorithms},
				volume={56},
				number={1},
				date={2020},
				pages={122--141},
			}
			
			\bib{JLR}{book}{
				author={Janson, Svante},
				author={\L uczak, Tomasz},
				author={Ruci\'nski, Andrzej},
				title={Random graphs},
				series={Wiley-Interscience Series in Discrete Mathematics and
					Optimization},
				publisher={Wiley-Interscience, New York},
				date={2000},
				pages={xii+333},
				isbn={0-471-17541-2},
				review={\MR{1782847}},
			}

        		\bib{KSS1996}{article}{
   			author={Koml\'{o}s, J\'{a}nos},
   			author={S\'{a}rk\"{o}zy, G\'{a}bor N.},
			author={Szemer\'{e}di, Endre},
			title={On the square of a Hamiltonian cycle in dense graphs},
   			booktitle={Proceedings of the Seventh International Conference on Random
   			Structures and Algorithms (Atlanta, GA, 1995)},
   			journal={Random Structures Algorithms},
   			volume={9},
   			date={1996},
   			number={1-2},
   			pages={193--211},
   			issn={1042-9832},
   			review={\MR{1611764}},
		}
		
		\bib{KSS1998}{article}{
   			author={Koml\'{o}s, J\'{a}nos},
   			author={S\'{a}rk\"{o}zy, G\'{a}bor N.},
   			author={Szemer\'{e}di, Endre},
   			title={Proof of the Seymour conjecture for large graphs},
   			journal={Ann. Comb.},
   			volume={2},
   			date={1998},
   			number={1},
   			pages={43--60},
   			issn={0218-0006},
   			review={\MR{1682919}},
		}
			
			\bib{Pell}{article}{
			 author={Lenstra, H. W. Jr.},
			 title={Solving the Pell equation},
			 journal={Notices of the American Mathematical Society},
			 volume={49},
			 number={2},
			 pages={182--192},
			 date={2002},
			 }
			
			\bib{NT}{article}{
				author={Nenadov, Rajko},
				author={Truji\'{c}, Milo\v{s}},
				title={Sprinkling a few random edges doubles the power},
				journal={SIAM J. Discrete Math.},
				volume={35},
				date={2021},
				number={2},
				pages={988--1004},
				issn={0895-4801},
				review={\MR{4259197}},
				doi={10.1137/19M125412X},
			}
			
		\end{biblist}
	\end{bibdiv}

\appendix

\section{Remaining proofs}\label{appendixA}

Here, we present the proofs of Proposition~\ref{rcr}, Facts~\ref{AD} and~\ref{diff}, as well as Fact~\ref{hpath} and Observation~\ref{ob2}. For convenience, we restate the statements below.

\rcrr*
\begin{proof} To simplify the notation, within this proof we abbreviate $r:=r_{cr}$ and $\ell:=\ell_{cr}.$ We have $m=k(\ell-1)+r+k$, so setting $\ell'=\ell-1$ and $r'=r+k$, by the definition of $r$ we get
$$\ell'<r'(r'+1)=(r+k)(r+k+1).$$
The desired inequality
$$f(\ell')=\frac{\binom{\ell'}2+\binom{r+k+1}2}{\ell'}\le \ell/2$$
or, equivalently,
\begin{equation}\label{rkl}
    (r+k+1)(r+k)\le 2\ell'
\end{equation}
follows, owing to $\ell\ge r(r+1)$, from inequality
$$(r+k+1)(r+k)\le 2\left(r(r+1)-1\right),$$
which is equivalent to
\begin{equation}\label{rk}
r^2+r\ge 2rk+k^2+k+2.
\end{equation}

Being a quadratic inequality in $r$, \eqref{rk} can be solved precisely, but we are contented with the observation that it is true for $r\ge 5k/2$. Indeed, for $k=1$ it becomes $r^2\ge r+4$, which holds for $r\ge3$. For $k\ge2$,
$$r^2=(r-k)^2+2rk-k^2\ge\frac54k^2+2rk$$
and so the left-hand side of~\eqref{rk} is at least
$$2kr+\tfrac54k^2+\tfrac52k\ge2kr+k^2+k+2,$$
as $\tfrac14k^2+\tfrac32k\ge 2$.
	
For the second part of Proposition~\ref{rcr}, first note that for $m\ge(7k/2)^3>(k^2+1)(k-1)$, the parameter $r:=r_{cr}$ exists. We now combine the expression $m=k\ell'+r+k$ with the inequality $\ell'\le(r+k)(r+k+1)-1$, to obtain the bound
$$m\le k(r+k)(r+k+1)+r\le (r+k)^3.$$
Thus, for $m\ge(7k/2)^3$, we get $r\ge5k/2$ and, by the first part of Proposition~\ref{rcr},
$$f(\ell^*)\le f(\ell')\le \ell/2.$$

Finally, let us consider the case $k=2$ and two subcases with respect to the parity of~$m$ (since $r_{cr}$ must have the same parity as $m$). For even $m$, by~\eqref{rk}, $r_{cr}\ge6$ implies~\eqref{rkl}. Thus,  by~\eqref{mrs},  $m\ge 2\cdot 6\cdot 7+6=90$ suffices. For odd $m$, the bound is even lower: we just need $r_{cr}\ge5$ and, consequently, $m\ge 2\cdot5\cdot6+5=65$.

Both of these bounds can be lowered even further by means of monotonicity.
Indeed, we have $r_{cr}(2,48)=4$ and $\ell_{cr}(2,35)=22$, and for these values inequality \eqref{rkl} holds. So it does for every greater value of $\ell_{cr}$ as long as $r_{cr}=4$ (since $\ell$ appears only on the right-hand side of \eqref{rkl}), that is, up to $m=88$. Similarly, $r_{cr}(2,24)=2$  and $\ell_{cr}(2,24)=11$, so, again, \eqref{rkl} holds for all even $m\in\{24,\dots,42\}$.

For $m$ odd we argue in the same fashion: we have $r_{cr}(2,35)=3$ and $\ell_{cr}(2,35)=16$, so for these values inequality \eqref{rkl} holds. Consequently, it does for every greater value of $\ell_{cr}$ as long as $r_{cr}=3$, that is, up to $m=63$. By the same token, since $r_{cr}(2,15)=1$  and $\ell_{cr}(2,15)=7$, inequality \eqref{rkl} holds for all $m\in\{15,\dots,25\}$.

\end{proof}

\begin{fact}
\label{AD}
For $m\ge 30k^3$, we have $\frac m{k+1}\le\ell_{k,m}<(m-k\ell_{k,m})(m-k\ell_{k,m}+1)$.
\end{fact}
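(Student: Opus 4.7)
The plan is to reduce each of the two inequalities to a real-variable estimate about $\lambda := \lambda_{k,m} = \sqrt{m(m+1)/(k^2+1)}$, exploiting the fact that $\ell_{k,m} \in \{\lfloor\lambda\rfloor, \lceil\lambda\rceil\}$ lies within distance $1$ of $\lambda$.

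For the lower bound $\tfrac{m}{k+1} \le \ell_{k,m}$, I would prove the slightly stronger statement $\lambda \ge \tfrac{m}{k+1} + 1$, which forces $\lfloor\lambda\rfloor \ge \tfrac{m}{k+1}$ and therefore $\ell_{k,m} \ge \lfloor\lambda\rfloor \ge \tfrac{m}{k+1}$. Writing
$$\lambda - \frac{m}{k+1} \;=\; \frac{\lambda^2 - (m/(k+1))^2}{\lambda + m/(k+1)} \;=\; \frac{m\bigl(2mk+(k+1)^2\bigr)}{(k^2+1)(k+1)^2\bigl(\lambda + m/(k+1)\bigr)},$$
and bounding the denominator crudely by $2(m+1)/\sqrt{k^2+1}$, one obtains a lower bound of order $m/k^2$. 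This exceeds $1$ already for $m \gtrsim k^2$, which is far below $m \ge 30k^3$.

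For the harder inequality $\ell_{k,m} < r(r+1)$ with $r=m-k\ell_{k,m}$, substituting $r = m - k\ell$ and expanding shows that this is equivalent to $g(\ell_{k,m}) > 0$, where
$$g(\ell) \;=\; k^2\ell^2 - (2km+k+1)\ell + m(m+1)$$
is an upward parabola with roots
$$\ell_\pm \;=\; \frac{(2km+k+1) \pm \sqrt{4km+(k+1)^2}}{2k^2}.$$
Both roots are close to $m/k$, which is well above $\lambda \approx m/\sqrt{k^2+1}$, so the relevant constraint is $\ell_{k,m} < \ell_-$. Since $\ell_{k,m} \le \lceil\lambda\rceil < \lambda+1$, it suffices to show
$$\ell_- - \lambda \;\ge\; 1 \qquad \text{for } m \ge 30k^3.$$
I would estimate the two sides separately. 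On one hand, $\sqrt{4km+(k+1)^2} \le 2\sqrt{km}+(k+1)$ yields
$$\ell_- \;\ge\; \frac{m}{k} - \sqrt{\frac{m}{k^3}}.$$
On the other, the identity $(m/k)^2 - \lambda^2 = m(m-k^2)/(k^2(k^2+1))$ together with $m/k + \lambda \le 2m/k$ gives, once $m \ge 10k^2$,
$$\frac{m}{k} - \lambda \;\ge\; \frac{9m}{40k^3}.$$
Subtracting, $\ell_- - \lambda \ge 9m/(40k^3) - \sqrt{m/k^3}$; setting $u = \sqrt{m/k^3}$ reduces the target to the quadratic inequality $9u^2/40 - u \ge 1$, whose smallest solution is well below $u = \sqrt{30}$, so $m \ge 30k^3$ suffices.

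The main obstacle is purely computational: landing on the claimed constant $30$ requires careful tracking of constants at each step, since several naive estimates (in particular $\sqrt{4km+(k+1)^2} \le 2\sqrt{km}+(k+1)$ and $m/k+\lambda \le 2m/k$) lose factors that could otherwise blow the threshold up to, say, $100k^3$. If needed, the bound on the discriminant can be tightened via $\sqrt{4km+(k+1)^2} \le 2\sqrt{km} + (k+1)^2/(4\sqrt{km})$, and the small positive term $(k+1)/(2k^2)$ in the exact expression for $\ell_-$ can be added back to the lower bound; any remaining small cases (e.g.\ $k=1$) can simply be checked by hand.
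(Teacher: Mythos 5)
Your proposal is correct, and for the harder of the two inequalities it takes a genuinely different and conceptually cleaner route than the paper. For the lower bound $\tfrac{m}{k+1}\le\ell_{k,m}$, both you and the paper estimate $\lambda-\tfrac{m}{k+1}$ by rationalizing the difference; the paper proves the slightly weaker $\lambda\ge\tfrac{m+k}{k+1}$ and finishes with one step of floor arithmetic, but the content is the same. For the upper bound the two arguments diverge. The paper makes a brute-force substitution: using $k^2+1\ge\left(k+\tfrac1{4k}\right)^2$ and $m^2+m\le\left(m+\tfrac12\right)^2$ it gets $\lceil\lambda\rceil\le\tfrac{m+1/2}{k+1/(4k)}+1=:x$, plugs $x$ into $x<(m-kx)(m-kx+1)$, and expands to a degree-$6$ polynomial inequality in $k$ and $m$ which it verifies term by term. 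You instead rewrite the target as $g(\ell_{k,m})>0$ for the upward parabola $g(\ell)=k^2\ell^2-(2km+k+1)\ell+m(m+1)$, compute its roots $\ell_\pm$, and reduce the task to showing $\ell_--\lambda\ge1$, which you then split through the common reference point $m/k$. Your discriminant computation $(2km+k+1)^2-4k^2m(m+1)=4km+(k+1)^2$ is correct, the bounds $\ell_-\ge\tfrac mk-\sqrt{m/k^3}$ and $\tfrac mk-\lambda\ge\tfrac{9m}{40k^3}$ (under $m\ge10k^2$, which is weaker than $m\ge30k^3$) check out, and the positive root of $\tfrac9{40}u^2-u-1$ is $\approx5.29<\sqrt{30}$, so $m\ge30k^3$ indeed suffices. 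What your route buys is transparency: the $k^3$ scaling of the threshold falls directly out of the quadratic-root geometry, whereas the paper's polynomial verification produces the constant but hides where it comes from. The closing caveat about small $k$ is unnecessary — all of your intermediate estimates are valid for every $k\ge1$.
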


\begin{proof}
For the first inequality it suffices to take $m\ge k^2$ and show that $\lambda_{k,m}\ge \frac{m+k}{k+1}$ as then $$\ell_{k,m}\ge\lfloor\lambda_{k,m}\rfloor\ge \left\lfloor\frac{m+k}{k+1}\right\rfloor\ge\frac m{k+1}.$$
Recalling that $\lambda_{k,m}=\sqrt{\frac{m^2+m}{k^2+1}}$, the inequality $\lambda_{k,m}\ge \frac{m+k}{k+1} $ is equivalent (after squaring both sides) to
\[
(m-k^2)(k^2+2km+1) \ge 0,
\]
which holds for $m\ge k^2$.

For the second inequality, first note that for $k\ge 1$,
\[
k^2+1 \ge k^2 + \frac{1}{2} + \frac{1}{16k^2} = \left(k+\frac{1}{4k} \right)^2
\]
and
\[
m^2+m \le m^2+m+\frac{1}{4} = \left(m+\frac{1}{2} \right)^2.
\]
Thus,
\[
\ell_{k,m} \le \lceil \lambda_{k,m} \rceil
\le \left\lceil \frac{m+\frac{1}{2}}{k+\frac{1}{4k}} \right\rceil
\le \frac{m+\frac{1}{2}}{k+\frac{1}{4k}} + 1=:x.
\]
Consequently, $\ell_{k,m}<(m-k\ell_{k,m})(m-k\ell_{k,m}+1)$ will follow from $x<(m-kx)(m-kx+1)$, which after some easy but tedious calculations, is equivalent to
\[
16k^6 - 12k^4 + (-24m - 12)k^3 - 9k^2 + (-6m - 3)k + m^2 + m - 1 > 0.
\]
It is not difficult to see that the latter holds for $m\ge 30k^3$, since
\begin{align*}
\underbrace{16k^6 - 12k^4}_{\ge 4k^6} + \underbrace{(-24m - 12)k^3}_{=-24mk^3-12k^3} \underbrace{- 9k^2}_{\ge-9k^3} + \underbrace{(-6m - 3)k}_{\ge-6mk^3-3k^3} + \underbrace{m^2 + m}_{30mk^3 + 30k^3}-1
\ge 4k^6+6k^3-1>0.
\end{align*}
\end{proof}	
	
Next, we provide an estimate on the difference between the bounds in Theorem~\ref{new_bounds}.

\begin{fact}
\label{diff}
For any integer $k$ and $m>k$ we have
$$\frac1{f(\lambda_{k,m})}-\frac1{f(\ell_{k,m})}\le\frac{128k^5}{m^3}.$$
\end{fact}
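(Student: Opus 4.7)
The plan is to Taylor-expand $f:=f_{k,m}$ at its unique interior critical point $\lambda:=\lambda_{k,m}$. From the closed form displayed in the excerpt, $f(x)=\tfrac12\bigl(x-1+m(m+1)/x+k^2 x-2km-k\bigr)$, so $f'(\lambda)=0$ and $f''(x)=m(m+1)/x^3$. At least one of $\lfloor\lambda\rfloor,\lceil\lambda\rceil$ lies within distance $1/2$ of $\lambda$; call it $\lambda'$. Since $\ell:=\ell_{k,m}$ is the integer minimizer, $f(\ell)\le f(\lambda')$. Taylor's theorem then gives
$$f(\ell)-f(\lambda)\ \le\ f(\lambda')-f(\lambda)\ =\ \tfrac12\, f''(\xi)(\lambda'-\lambda)^2\ \le\ \tfrac18\, f''(\xi),$$
for some $\xi$ between $\lambda'$ and $\lambda$. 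For $m\ge k+1$ one has $\lambda^2=m(m+1)/(k^2+1)\ge 1$, so $\xi\ge \lambda-1/2\ge \lambda/2$, and substituting $\lambda^3=(m(m+1))^{3/2}/(k^2+1)^{3/2}$ into $f''(\xi)\le 8m(m+1)/\lambda^3$ yields
$$f(\ell)-f(\lambda)\ \le\ \frac{(k^2+1)^{3/2}}{\sqrt{m(m+1)}}.$$

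For the lower bound on $f(\lambda)$, I would combine identity~\eqref{lambdakm} with the elementary estimates $\sqrt{k^2+1}-k=1/(\sqrt{k^2+1}+k)\ge 1/(2k+1)$ and $\sqrt{(k^2+1)(m^2+m)}\ge m\sqrt{k^2+1}$ to obtain $f(\lambda)\ge m/(2k+1)-(k+1)/2$. In the regime $m\ge (k+1)(2k+1)$, this gives $f(\lambda)\ge m/(2(2k+1))\ge m/(6k)$, so $f(\lambda)^2\ge m^2/(36k^2)$. Since $f(\ell)\ge f(\lambda)$ by convexity,
$$\frac{1}{f(\lambda)}-\frac{1}{f(\ell)}\ =\ \frac{f(\ell)-f(\lambda)}{f(\lambda)f(\ell)}\ \le\ \frac{f(\ell)-f(\lambda)}{f(\lambda)^2}\ \le\ \frac{36k^2(k^2+1)^{3/2}}{m^2\sqrt{m(m+1)}}\ \le\ \frac{108\,k^5}{m^3}\ \le\ \frac{128\,k^5}{m^3},$$
using $(k^2+1)^{3/2}\le(2k^2)^{3/2}\le 3k^3$ and $\sqrt{m(m+1)}\ge m$.

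What remains is the small-$m$ range $k<m<(k+1)(2k+1)$, consisting of finitely many values for each $k$. Here the linear-in-$m$ lower bound on $f(\lambda)$ is too weak, but $128k^5/m^3$ is already of order at least $k^2$, and one can fall back on the cruder estimate $1/f(\lambda)-1/f(\ell)\le 1/f(\lambda)$ combined with a sharper expansion $\sqrt{(k^2+1)(m^2+m)}=m\sqrt{k^2+1}\sqrt{1+1/m}$ to keep $f(\lambda)$ bounded away from zero. The main obstacle is not the Taylor computation, which is essentially one line, but rather packaging these residual small-$m$ cases so that the neat bound $128k^5/m^3$ absorbs them; the slack between the derived constant $108$ and the stated $128$ is precisely what supplies the required room.
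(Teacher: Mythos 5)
Your argument tracks the paper's proof closely in overall shape: an upper bound on $f(\ell)-f(\lambda)$ decaying like $1/m$, a lower bound $f(\lambda)\gtrsim m/k$, and then the telescoping $1/f(\lambda)-1/f(\ell)\le (f(\ell)-f(\lambda))/f(\lambda)^2$. Where you genuinely deviate is in the first step. You bound $f(\ell)-f(\lambda)$ via Taylor's theorem with the Lagrange remainder, using $f'(\lambda)=0$, $f''(\xi)=m(m+1)/\xi^3$, and the fact that the nearest integer $\lambda'$ to $\lambda$ satisfies $|\lambda'-\lambda|\le 1/2$. The paper instead computes $f(\lambda\pm1)-f(\lambda)$ in closed form (they are rational in $\lambda$ once one substitutes $\lambda^2=m(m+1)/(k^2+1)$) and bounds the smaller of the two. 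Your route buys the factor $(\lambda'-\lambda)^2\le 1/4$ and hence a marginally better constant (108 rather than the paper's 128), and it also only needs $m\ge(k+1)(2k+1)$ rather than the paper's $m\ge 6k^2$. Both methods are sound; yours is the cleaner packaging.

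On the residual small-$m$ range, you correctly flag that your argument covers only $m\ge(k+1)(2k+1)$; what you do not appreciate is that this is not merely a packaging chore. The statement as written cannot hold unrestricted for all $m>k$: $f(\lambda)$ has a zero inside the claimed range, e.g.\ $f(\lambda_{7,8})=\sqrt{50\cdot 72}-\tfrac{17\cdot 7}{2}-\tfrac12=60-60=0$ (more generally whenever $4m(m+1-k)=(k+1)^2$), so $1/f(\lambda_{k,m})$ is undefined or unbounded near such pairs and the bound $128k^5/m^3$ is violated or meaningless. Your closing plan of "absorbing the small-$m$ cases into the slack between 108 and 128" therefore cannot succeed in the literal form you describe. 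The paper's own proof has exactly the same restriction (it silently works under $m\ge 6k^2$), and the fact is only invoked as an asymptotic statement accompanying Theorem~\ref{new_bounds}, which already requires $m\ge m_k$; the honest reading is that Fact~\ref{diff} is meant for $m$ sufficiently large in terms of $k$. With that caveat in place, your proof is complete and correct, and in the regime where it applies it is a legitimate, slightly sharper alternative to the paper's computation.
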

\begin{proof}
Recall that $f:=f_{k,m}$ is a continuous, convex function with a unique global minimum at $\lambda:=\lambda_{k,m}$ and a minimum over integer domain at $\ell:=\ell_{k,m}$. Thus,
\[
f(\ell)\le\min\{f(\lambda-1),f(\lambda+1)\}.
\]
But,
\[
 f(\lambda-1)-f(\lambda) = \frac{(k^2+1)^2}{-2k^2+2\sqrt{(k^2+1)m(m+1)}-2}
\]
\[
f(\lambda+1)-f(\lambda) = \frac{(k^2+1)^2}{2k^2+2\sqrt{(k^2+1)m(m+1)}+2},
\]
hence, we have
$$f(\ell)-f(\lambda)\le \frac{(k^2+1)^2}{2k^2+2\sqrt{(k^2+1)m(m+1)}+2}\le\frac{(k^2+1)^2}{2km}.$$
Moreover, since clearly $k^2+1 \ge \left(k+\frac{1}{4k}\right)^2$ and $m^2+m\ge \left(m+\frac{1}{4}\right)^2$, we get
\begin{align*}
f(\lambda)&=\sqrt{(k^2+1)(m^2+m)} - \frac{(2m+1)k}{2} - \frac{1}{2}\\
&\ge \left(k+\frac{1}{4k}\right)\left(m+\frac{1}{4}\right) - \frac{(2m+1)k}{2} - \frac{1}{2}
=\frac{4m-4k^2-8k+1}{16k}.
\end{align*}
Now for $m\ge 6k^2$,
\[
4m-4k^2-8k+1 = (2m + 2m) -4k^2-8k+1 \ge 2m + 12k^2 -4k^2-8k+1 \ge 2m
\]
and $f(\lambda)\ge m/(8k)$. Hence,
\[
\frac1{f(\lambda)}-\frac1{f(\ell)}\le\frac{f(\ell)-f(\lambda)}{f(\lambda)^2}\le
\frac{(k^2+1)^2}{2km} \cdot \frac{64k^2}{m^2}
= \frac{32k(k^2+1)^2}{m^3} \le \frac{32k(2k^2)^2}{m^3}
=\frac{128k^5}{m^3}.
\]
%and the latter is less than $\alpha$ if $m > 4\sqrt{2}k^{5/3} / \alpha^{1/3}=\Omega(k^2)$. (We already assumed that $m\ge 6k^2$.)
\end{proof}

\hpathh*
\begin{proof} By symmetry it suffices to prove the statement for $t=1$ only. Let $u_1,\dots,u_{h+1}$ be consecutive (in the order established by $P$) vertices of $P_1$. We claim that there are at most $k(s-1)$ other vertices between $u_1$ and $u_{h+1}$ which will yield the presence of an edge between $u_1$ and $u_{h+1}$, completing the proof.

Suppose this is not true. Then, by the pigeonhole principle, there are $s$ vertices between $u_1$ and $u_{h+1}$, all from the same set $V_t$, for some $t\in\{2,\dots,k+1\}$.   Let $u_1<v_1<\cdots <v_s<u_{h+1}$ be an $s$-tuple of such vertices such that the pair $\{v_1,v_s\}$ is separated in $\overrightarrow{V(P)}$ by the \emph{smallest possible} number of vertices. Let there be exactly $q-1$ vertices between $v_1$ and $v_{s}$. Then, to avoid $K_s$ in $F$, we must have $q-1\ge m$. Thus, there are at least $m - (h-1) \geq k(s-1)+1$ vertices of $V_2\cup\cdots\cup V_{k+1}$ between $v_1$ and $v_{s}$. Among them, again by the pigeonhole principle, there are  $s$ vertices which all belong to the same subset $V_{t'}$, for some $t'\in\{2,\dots,k+1\}$. This contradicts the choice of the $s$-tuple $\{v_1,\dots,v_s\}$, as the new $s$-tuple is squeezed between $v_1$ and $v_{s}$.
\end{proof}

\obtwo*
\begin{proof}
For $i\ge1$ and $t\in[k+1]$, let $x_i^{(t)}$ be the numbers of $i$-far edges in $P_t$.
Note that there are exactly $|V_1|-i$ $V_1$-pairs that are $i$-far, and $|V_1|-i-x_i(V_1)$ of them are \emph{not} $V_1$-edges. Let $u<v$ be one such pair, and let $x_j^{(t)}(u,v)$ be the number of $j$-far $V_t$-edges between $u$ and $v$.
The only reason for $u<v$ not to be an $i$-far edge is that $v-u\ge m+1$, that is, there are at least $m$ vertices of $P$ between $u$ and $v$. Since we are after a lower bound on $\sum_{t=2}^{k+1}x_j^{(t)}(u,v)$, we assume that there are exactly $m$ such vertices.

Among these vertices, exactly $i-1$ belong to $V_1$, while the remaining vertices are shared between the other $k$ sets. Since there are no copies of $K_s$ in any $P_t$, at most $s-1$ of these vertices belong to each set $V_t$, and so, at least $m-(i-1)-(k-1)(s-1)=s+h-i$ are in each set.
Thus, owing to the upper bound on $j$,
 $$\sum_{t=2}^{k+1}x_j^{(t)}(u,v)\ge k\left(s+h-i-j\right).$$

Let ${\mathcal P}_1$ be the set of all $i$-far $V_1$-pairs that are not edges of $F$ and denote by ${\mathcal E}_{\neq 1}$ the set of all $j$-far $V_t$-edges in $F$ for some $t\ge2$.
Imagine an auxiliary bipartite graph $\mathcal G$ between the sets of pairs ${\mathcal P}_1$ and ${\mathcal E}_{\neq 1}$ where an edge connects $(u,v)\in {\mathcal P}_1$ with $(w,z)\in {\mathcal E}_{\neq1}$ whenever $u<w<z<v$. Set $e:=|E(\mathcal G)|$ and observe that by the above argument
$$e\ge|{\mathcal P}_A|\cdot k\left(s+h-i-j\right).$$
On the other hand, the degree of each vertex $(w,z)\in {\mathcal E}_{\neq i}$ is at most $i$. Indeed, consider all pairs $(u,v)$ connected with $(z,w)$ in $\mathcal G$. Their left ends lie to the left of $w$, while their right ends to the right of $z$ (we assume $w<z$ here) and each pair is $i$-far. This implies that there cannot be more than $i$ of them.
(The maximum value of $i$ is achieved by a configuration
consisting of a block of $i$ vertices of $V_1$ followed by at least $m-i+1$ vertices of $\bigcup_{t=2}^{k+1} V_t$, followed by another block of $i$ vertices of $V_1$). Consequently, $e\le |{\mathcal E}_{\neq1}|\cdot i$ and, put together,
  $$|{\mathcal P}_1|\cdot k\left(s+h-i-j\right)\le|{\mathcal E}_{\neq1}|\cdot i,$$ or, equivalently,
  $$|V_1|-i-x_i^{(j)}\le \frac {i\sum_{t=2}^{k+1} x_i^{(t)}}{k\left(s+h-i-j\right)} .$$
  By symmetry, we also get
  $$|V_t|-i-x_i^{(t)}\le \frac{i\sum_{t'=2}^{k+1} x_i^{(t')}}{k\left(s+h-i-j\right)}$$
  for all $t=2,\dots,k+1$. Summing these $k+1$ inequalities, we get
 $$L-(k+1)i-x_i\le \frac {i}{s+h-i-j}\cdot x_j,$$	
 as required.
 \end{proof}

\end{document}